\title{Fujita-type freeness for quasi-log canonical threefolds}
\author{Haidong Liu} 
\date{2019/2/18, version 0.05}
\subjclass[2010]{Primary 14C20, Secondary 14E30} 
\keywords{quasi-log canonical singularities, Fujita-type freeness}
\address{Department of Mathematics, Graduate School of Science, 
Kyoto University, Kyoto 606-8502, Japan}
\email{jiuguiaqi@gmail.com}
\DeclareMathOperator{\Supp}{Supp}
\DeclareMathOperator{\mult}{mult}
\DeclareMathOperator{\Nqlc}{Nqlc}
\DeclareMathOperator{\Nqklt}{Nqklt}
\DeclareMathOperator{\Nklt}{Nklt}
\DeclareMathOperator{\Sing}{Sing}
\DeclareMathOperator{\Spec}{Spec}
\DeclareMathOperator{\ord}{ord}
\DeclareMathOperator{\Index}{Index}
\DeclareMathOperator{\red}{red}
\newtheorem{thm}{Theorem}[section]
\newtheorem{lem}[thm]{Lemma}
\newtheorem{prop}[thm]{Proposition}
\newtheorem{conj}[thm]{Conjecture}
\newtheorem{cor}[thm]{Corollary}
\theoremstyle{definition}
\newtheorem{defn}[thm]{Definition}
\newtheorem{rem}[thm]{Remark}
\newtheorem*{ack}{Acknowledgments}     
\newtheorem*{note}{Notation}
\newtheorem{case}{Case}
\begin{document}

\maketitle 

\begin{abstract}
In this paper, 
we show that Fujita's basepoint-freeness conjecture 
for projective quasi-log canonical singularities 
holds true in dimension three.
Immediately, we prove Fujita-type basepoint-freeness for 
projective semi-log canonical threefolds.
\end{abstract}

\tableofcontents

\section{Introduction}\label{sec1}

This paper is a continuous study of \cite{fujino-haidong-freeness} to treat
Fujita's basepoint-freeness conjecture for quasi-log canonical singularities 
(see Section \ref{sec2} for a quick review of the theory of quasi-log schemes).
Note that we have posted Fujita's conjecture for quasi-log canonical pairs 
in \cite{fujino-haidong-freeness}.
Before we recall it in this paper, we first agree on a convention for reader's convenience.

\begin{note}\label{not1.1}
Let $N$ be a $\mathbb{R}$-Cartier divisor on a scheme $X$ of dimension $n$.
We call that $N$ satisfies {\em{Fujita's condition with respect to $n$}} 
if  
\begin{itemize}
\item[$(1)$]  $N^{\dim X_i}\cdot X_i>(\dim X_i)^{\dim X_i}$ 
for every positive-dimensional irreducible component $X_i$ of $X$, and
\item[$(2)$] for every positive $k$-dimensional irreducible
subvariety $Z$ which is not an irreducible 
component of $X$, 
we put 
$$n_Z=\min_i\{ \dim X_i \, |\, 
{\text{$X_i$ is an irreducible component of $X$ with 
$Z\subset X_i$}}\}$$ and assume 
that $N^{k}\cdot Z\geq n_Z^{k}$. 
\end{itemize}
When $X$ is equidimensional, our notation here is the same as that in \cite{fujino-slc-surface}
for semi-log canonical pairs, which is slightly stronger than that in
Fujita's original 
basepoint-freeness conjecture in \cite{fujita}.
\end{note}

We restate \cite[Conjecture 1.2]{fujino-haidong-freeness} by using this notation.

\begin{conj}[Fujita-type freeness for quasi-log canonical 
pairs]\label{conj-fujita}
Let $[X, \omega]$ be a projective quasi-log canonical 
pair of dimension $n$. 
Let $M$ be a Cartier divisor on $X$. We put $N=M-\omega$ 
and assume that $N$ satisfies Fujita's condition. 
Then the complete linear system $|M|$ is basepoint-free. 
\end{conj}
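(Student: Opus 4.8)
Since the conjecture is established for $\dim X\le 2$ in \cite{fujino-haidong-freeness}, the plan is to argue by induction on $n=\dim X$ and to settle the remaining threefold case $n=3$, with the lower-dimensional statements available as inductive hypothesis. As basepoint-freeness of $|M|$ means $\Bs|M|=\emptyset$, it is equivalent to separating an arbitrary closed point: I fix $x\in X$ and aim to produce $s\in H^{0}(X,\mathcal O_X(M))$ with $s(x)\neq 0$. The two tools I will rely on, both from the theory recalled in Section \ref{sec2}, are \emph{adjunction}, which makes the normalization of any qlc stratum $W\ni x$ into a quasi-log canonical pair $[W,\omega|_W]$ of dimension $\dim W$, and the \emph{vanishing theorem}: since $N$ is nef and, by Fujita's condition, big on every qlc stratum, it gives $H^{1}(X,\mathcal I_{W}\otimes\mathcal O_X(M))=0$, hence surjectivity of the restriction $H^{0}(X,\mathcal O_X(M))\to H^{0}(W,\mathcal O_W(M|_W))$.

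The argument divides according to the minimal qlc stratum $W$ through $x$. The easy case is $\dim W\le 2$. Here adjunction yields the qlc pair $[W,\omega|_W]$, and the purpose of the bookkeeping in Notation \ref{not1.1} is to ensure that $N|_W=M|_W-\omega|_W$ again satisfies Fujita's condition, now with respect to $\dim W$: any positive-dimensional qlc stratum $Z\subseteq W$ is also a qlc stratum of $X$, and in dimension three the integer $n_Z$ computed inside $W$ never exceeds the one computed inside $X$, so the lower bounds in $(1)$ and $(2)$ only weaken and descend; the strict inequality $(1)$ for $W$ itself comes from $N^{\dim W}\cdot W\ge n_W^{\dim W}\ge(\dim W+1)^{\dim W}>(\dim W)^{\dim W}$, valid because $W$ is a proper stratum of a higher-dimensional component. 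The inductive hypothesis then furnishes a section on $W$ not vanishing at $x$, which lifts to the desired $s$ by the surjectivity above. Everything therefore reduces to the case in which the minimal stratum through $x$ is a three-dimensional irreducible component $X_i$ and $x$ is a general point of it.

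In this decisive case I would force $x$ into a smaller qlc center. Because $N^{3}\cdot X_i>3^{3}$, a concentration-of-singularities construction (the analogue in the qlc category of the Angehrn--Siu and Kollár method) produces an effective $\mathbb R$-divisor $D\sim_{\mathbb R}cN$ with $0<c<1$ such that $[X,\omega+D]$ is again quasi-log canonical and possesses a qlc center through $x$ strictly contained in $X_i$, the divisor $N-D\sim_{\mathbb R}(1-c)N$ replacing $N$. If this center can be taken to be $\{x\}$ itself, adjunction to the zero-dimensional stratum together with the vanishing theorem immediately separates $x$. In general the created center $V$ is a surface or a curve, and one must cut it down: restrict to $V$, pass to the induced quasi-log canonical structure on its normalization, and repeat the construction to create a still smaller center through $x$, terminating after at most three steps with the point $x$.

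The main obstacle, and the reason dimension three is not a formal consequence of the surface case, is the numerical bookkeeping in this cutting-down. The strict inequality $N^{3}\cdot X_i>3^{3}$ furnishes only a bounded amount of slack, and a naive single creation of the first center may consume so much positivity that the residual divisor $(1-c)N$ no longer satisfies Fujita's condition on the surface $V$, where one needs $(1-c)^{2}\,N^{2}\cdot V>2^{2}$ while merely knowing $N^{2}\cdot V\ge 3^{2}$. Overcoming this requires performing the creation and the successive tie-breaking in a coordinated way, choosing the coefficients so that the dimension of the minimal qlc center through $x$ strictly drops at each stage while the accumulated loss of positivity stays below the threshold dictated by the dropping exponents $3^{3}\to 2^{2}\to 1^{1}$; a secondary difficulty is guaranteeing that the minimal center at each stage is irreducible, which may demand a perturbation of $D$ so that adjunction applies to a genuine variety rather than a reducible stratum. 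Verifying that the slack from $N^{3}\cdot X_i>3^{3}$ is exactly enough to carry all these reductions through simultaneously is the technical heart of the proof.
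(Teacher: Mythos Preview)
The statement you are attempting is the general conjecture, which the paper does \emph{not} prove; only the case $n=3$ is established (Theorem \ref{thm-general}). Your proposal correctly restricts attention to $n=3$ and correctly isolates the reduction via adjunction and vanishing to the situation where $x$ lies on a three-dimensional component and $x\notin\Nqklt(X,\omega)$. From that point on, however, your plan has a genuine gap.

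You propose to run the Angehrn--Siu/Koll\'ar cutting-down directly on the qlc pair $[X,\omega]$: create $D\sim_{\mathbb R}cN$ making $x$ a qlc center, restrict to the minimal center, and iterate. The problem is the very first step. To create $D$ with high order at $x$ one uses Riemann--Roch, and the order one can guarantee is roughly $(N^3)^{1/3}/(\mult_x X)^{1/3}$. But for a general qlc (or even log canonical) threefold, $\mult_x X$ is unbounded; even for $\mathbb Q$-factorial terminal threefolds the multiplicity grows with the index (cf.\ the discussion opening Section \ref{sec5} and \cite{kkm2}). So the hypothesis $N^3>27$ alone does not produce a divisor singular enough to force a smaller center through $x$, and your inductive procedure cannot start. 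Your last paragraph acknowledges the numerical bookkeeping is delicate but treats it as a matter of balancing coefficients; the actual obstruction is structural, not arithmetical.

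The paper's route around this is substantially different from what you sketch. After reducing to $x\notin\Nqklt(X,\omega)$ it (i) normalizes $X$ via Theorem \ref{lem-normal}, keeping track of $\mathcal I_{\Nqklt}$ so that sections descend; (ii) perturbs the qlc structure to an honest klt pair $(X,\Delta_\varepsilon)$ using Theorem \ref{u-thm}; (iii) passes to a $\mathbb Q$-factorial terminalization $h:\widetilde X\to X$ (Lemma \ref{term-lem}) and replaces $x$ by a \emph{smooth} point $\widetilde x\in h^{-1}(x)$, so that multiplicity is $1$ and the deficit machinery of Section \ref{sec3} applies; and (iv) when $x$ is itself an isolated terminal point of index $r$, constructs a \emph{global index one cover} $p:X'\to X$ (Section \ref{sec4}) so that $x'$ has multiplicity $\le 2$ and $(N')^3=rN^3>27$, then averages over the Galois group to descend (Lemma \ref{key-lem}). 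All the cutting-down is performed upstairs on these better-behaved models, with the deficit inequalities of Proposition \ref{prop-def} controlling $\sum c_i<1$, and only at the end does one push the section back to $X$. None of these reductions appears in your proposal, and without at least (iii)--(iv) the argument does not go through.
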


Osamu Fujino and the author have proved that Conjecture \ref{conj-fujita} holds true for $n\leq 2$
in \cite{fujino-haidong-freeness}. In this paper, we continue their work to prove that
Conjecture \ref{conj-fujita} holds true for $n=3$. That is:

\begin{thm}[Theorem \ref{thm-general}]\label{thm-main}
Let $[X, \omega]$ be a projective quasi-log canonical pair of dimension three.
Let $M$ be a Cartier divisor on $X$. We put $N=M-\omega$ 
and assume that $N$ satisfies Fujita's condition. 
Then the complete linear system $|M|$ is basepoint-free.
\end{thm}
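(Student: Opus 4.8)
The plan is to argue by induction on $\dim X$, the cases $\dim X\le 2$ being exactly \cite{fujino-haidong-freeness}, and to prove basepoint-freeness pointwise: fixing a closed point $x\in X$, I would produce a section $s\in H^0(X,\mathcal O_X(M))$ with $s(x)\neq 0$. Throughout I use that $N$ is nef, so that Fujita's condition forces $N$ to be \emph{log big}, i.e.\ big on every qlc stratum; this is what feeds the vanishing theorem for quasi-log canonical pairs recalled in Section~\ref{sec2}. The argument then splits according to the position of $x$ relative to the qlc strata of $[X,\omega]$: either $x$ lies on a qlc stratum of dimension $<3$, or $x$ is a point of a three-dimensional irreducible component $X_i$ lying on no proper stratum.

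First I would dispose of the case where $x$ lies on some qlc stratum $W$ with $\dim W\le 2$ (this subsumes the case where $x$ is a general point of a component of dimension $\le 2$, since the components are themselves qlc strata). By adjunction for qlc pairs, $[W,\omega|_W]$ is again a projective quasi-log canonical pair, of dimension at most two. The definition of $n_Z$ in Notation~\ref{not1.1} is tailored precisely so that $N|_W$ again satisfies Fujita's condition on $W$: for a subvariety $Z\subset W$ the bound $N^{\dim Z}\cdot Z\ge n_Z^{\dim Z}$ is inherited verbatim. Hence the inductive hypothesis gives that $|M|_W|$ is basepoint-free, so there is a section on $W$ not vanishing at $x$. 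I would then lift it to $X$: from the short exact sequence
\[
0\longrightarrow \mathcal I_{W}\otimes\mathcal O_X(M)\longrightarrow \mathcal O_X(M)\longrightarrow \mathcal O_{W}(M|_{W})\longrightarrow 0,
\]
surjectivity of $H^0(X,\mathcal O_X(M))\to H^0(W,\mathcal O_W(M|_W))$ follows from $H^1(X,\mathcal I_{W}\otimes\mathcal O_X(M))=0$, which is the qlc vanishing theorem applied to the nef and log big divisor $N=M-\omega$.

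The substance of the theorem is the remaining case: $x$ is a point of a three-dimensional irreducible component $X_i$ lying on no proper qlc stratum, the critical instance being a smooth point of $X_i$. Here restriction to $X_i$ does not lower the dimension, so I must \emph{create} a qlc center through $x$. The idea is to exploit the strict inequality $N^3\cdot X_i>3^3=27$: by asymptotic Riemann--Roch on $X_i$ one has $h^0(X_i,mN)\sim\tfrac16(N^3\cdot X_i)\,m^3$, which for large $m$ exceeds the $\tfrac16(3m)^3$ conditions required to impose multiplicity $>3m$ at the smooth point $x$. Producing such a section and dividing by $m$ yields an effective $\mathbb Q$-divisor $D\sim_{\mathbb Q}N$ with $\mult_x D>3$, so that the log canonical threshold $c=\mathrm{lct}_x(X_i;D)$ is $<1$; then $cD\sim_{\mathbb Q}cN$ carves out a qlc center $V\ni x$ of dimension $<3$ for the modified pair $[X_i,\omega|_{X_i}+cD]$ (after tie-breaking, a unique minimal one). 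Passing to $V$ by adjunction lands me on a qlc pair of dimension $\le 2$, to which the first part of the argument should apply --- provided the residual polarization still satisfies Fujita's condition.

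The main obstacle is exactly this bookkeeping of positivity across the construction. A single cut produces $D\sim_{\mathbb Q}cN$ with a center $V\ni x$, but $c=\mathrm{lct}_x$ may be close to $1$, so after subtracting $cD$ almost no positivity of $N$ survives on $V$. The remedy, following the Kawamata/Angehrn--Siu philosophy adapted to the quasi-log canonical setting, is to induct on $\dim V$: at each stage one cuts the current minimal center down to a strictly smaller center through $x$, spending an additional fraction of $N$ whose size is controlled by $N^{\dim V}\cdot V\ge n_V^{\dim V}=3^{\dim V}$, and one must verify that the accumulated coefficient stays $<1$ throughout. The strict top-dimensional inequality $N^3\cdot X_i>27$ provides the initial slack, while the non-strict bounds $N^{k}\cdot Z\ge n_Z^{k}$ (with $3^k>k^k$ for $k\le 2$) provide the slack at each later step; keeping this ledger balanced until $\dim V=0$, at which point the section lifts directly by qlc vanishing, is the delicate heart of the argument. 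That all of this must be run not on a smooth variety but on a quasi-log canonical three-fold --- with multiplier-ideal and Nadel vanishing replaced systematically by Fujino's adjunction and vanishing package for qlc pairs --- is what makes the three-dimensional case substantially harder than $\dim\le 2$.
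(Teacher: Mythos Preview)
Your overall shape is right: reduce to a fixed closed point, peel off the case where $x$ lies on a lower-dimensional qlc stratum by adjunction plus qlc vanishing plus the surface case of \cite{fujino-haidong-freeness}, and in the remaining case run an Angehrn--Siu/Kawamata ``inductive procedure'' to cut down to a zero-dimensional center. That is exactly the skeleton of the paper's argument.

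What is missing is the reduction chain that makes the inductive procedure actually run on a qlc threefold. You say the critical instance is a smooth point of $X_i$ and that ``all of this must be run not on a smooth variety but on a quasi-log canonical threefold'', but you do not indicate how. The paper does \emph{not} run the cutting argument directly on $[X,\omega]$; instead it builds a tower
\[
\text{qlc } X \ \xleftarrow{\ \nu\ } \ \text{normal } \widetilde X \ \xleftarrow{\ h\ } \ \text{$\mathbb Q$-factorial terminal } \widehat X \ \xleftarrow{\ p\ } \ X'
\]
(normalization via Theorem~\ref{lem-normal}, terminalization via Lemma~\ref{term-lem}, and a new \emph{global index one cover} $p$ constructed in Section~\ref{sec4}), together with a perturbation $K_X+\Delta_\varepsilon\sim_{\mathbb R}\omega+\varepsilon N$ from Theorem~\ref{u-thm} turning the qlc structure into an honest klt pair near $x$. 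One climbs this tower to create multiplicity and descends it to apply ampleness and vanishing; the descent of sections uses $\nu_*\mathcal I_{\Nqklt(\widetilde X,\nu^*\omega)}=\mathcal I_{\Nqklt(X,\omega)}$ and Corollary~\ref{term-cor}. None of these steps appears in your proposal.

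The most serious gap is at the terminal singular point. Your Riemann--Roch count ``$\tfrac16(3m)^3$ conditions to impose multiplicity $>3m$'' is only valid at a smooth point; at a terminal point of index $r$ the multiplicity $\mult_x X$ can grow without bound as $r$ increases, so the divisor $L$ you produce may have $\ord_x L$ far below the deficit $d_x$, and the procedure stalls. This is precisely the obstruction the paper overcomes with the global index one cover: on the cover $X'$ the point $x'$ is Gorenstein terminal (so $\mult_{x'}X'\le 2$), Fujita's lower-dimensional bounds are preserved, and the top inequality improves to $(N')^3>27$ from $N^3>27/r$. The cutting is performed on $X'$, made $G$-invariant via Lemma~\ref{key-lem}, and pushed down. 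Without this device (or an equivalent one) your sketch does not close in the non-Gorenstein terminal case, which is exactly the new content of the theorem.
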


In particular, since semi-log canonical pairs contain natural quasi-log canonical structures 
by \cite{fujino-funda-slc},
we have the following corollary immediately by Theorem \ref{thm-main}:

\begin{cor}[]\label{cor-slc}
Let $(X, \Delta)$ be a projective semi-log canonical pair of dimension three. 
Let $M$ be a Cartier divisor on $X$. 
We put $N=M-K_X-\Delta$
and assume that $N$ satisfies Fujita's condition.
Then the complete linear system $|M|$ is basepoint-free. 
\end{cor}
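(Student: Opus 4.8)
The plan is to derive Corollary~\ref{cor-slc} from Theorem~\ref{thm-main} by equipping the semi-log canonical pair with its canonical quasi-log canonical structure. First I would invoke \cite{fujino-funda-slc}: for a projective semi-log canonical pair $(X,\Delta)$ of dimension three, the pair $[X,K_X+\Delta]$ carries a natural quasi-log canonical structure, so that $[X,\omega]$ with $\omega=K_X+\Delta$ is a projective quasi-log canonical pair of dimension three in the sense reviewed in Section~\ref{sec2}. This is the mechanism that transports all the theory of quasi-log schemes---in particular Fujita-type freeness---to the semi-log canonical setting.

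With that structure fixed, the proof is essentially a translation of hypotheses. I would set $\omega=K_X+\Delta$ and observe that the divisor $N=M-K_X-\Delta$ appearing in the corollary is exactly $N=M-\omega$. By assumption $N$ satisfies Fujita's condition in the sense of Notation~\ref{not1.1}, and $M$ is a Cartier divisor on $X$. Thus the hypotheses of Theorem~\ref{thm-main} are verified verbatim for the quasi-log canonical pair $[X,\omega]$ and the Cartier divisor $M$.

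Applying Theorem~\ref{thm-main} then yields that the complete linear system $|M|$ is basepoint-free, which is precisely the assertion of the corollary. The one point requiring a little care---and the main (and only real) obstacle---is making sure that the quasi-log canonical structure supplied by \cite{fujino-funda-slc} is compatible with the dimension, equidimensionality, and irreducible-component data used in Notation~\ref{not1.1}, so that ``Fujita's condition'' for the semi-log canonical pair matches ``Fujita's condition'' for the associated quasi-log canonical pair. Since semi-log canonical pairs are equidimensional and the canonical quasi-log structure has the same underlying scheme $X$ with the same irreducible components and same dimensions, the two conditions coincide, and no further adjustment is needed. The corollary follows immediately.
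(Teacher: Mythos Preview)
Your proposal is correct and matches the paper's own approach: the paper simply remarks that semi-log canonical pairs carry a natural quasi-log canonical structure by \cite{fujino-funda-slc} and then states that Corollary~\ref{cor-slc} follows immediately from Theorem~\ref{thm-main}. Your additional check that Fujita's condition for the slc pair agrees with that for the associated qlc pair is a reasonable elaboration of this one-line deduction.
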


The basic idea to tackle basepoint-freeness conjecture
for a qlc pair $[X, \omega]$
is to use Riemann-Roch theorem and Fujita's condition to create an effective
divisor $L_1$ such that $L_1$ is very singular at point $x$ but as smooth as 
possible elsewhere. Then there is a maximal number $r_1<1$ such that  $[X, \omega+r_1L_1]$
is qlc at around $x$ and a minimal qlc center $Z$ passing through $x$.
Since $\dim Z < \dim X$ and  $[Z, (\omega+r_1L_1)|_{Z}]$ is an induced quasi-log pair which
is qlc at around $x$,
we can use  Riemann-Roch theorem and Fujita's condition repeatedly and finally get 
a sequence of numbers $c=\Sigma c_i$ and a sequence of divisors $L=\Sigma r_iL_i$ such that 
$x$ is the minimal qlc center of $[X, \omega+L]$. If $c<1$, then Fujita's conjecture is true
by the vanishing theorem. We call above operation {\em{inductive procedure}}.

\medskip

Let us quickly explain the strategy to prove Theorem \ref{thm-main} based on 
inductive procedure. 
Note that in the content of this paper, we will show this
strategy from the bottom up.
We take an arbitrary closed point $x$ of $X$.

\medskip

$\bullet$ If $x \in\Nqklt(X, \omega)$, then there exists an irreducible minimal qlc center 
$W$ passing through $x$ such that $\dim W<\dim X$.
By adjunction (see Theorem \ref{thm-av} (i)), $[W, \omega|_{W}]$ is a quasi-log canonical pair.
By the vanishing theorem (see Theorem \ref{thm-av} (ii)), 
the natural restriction map 
$H^0(X, \mathcal O_X(M))\to H^0(W, \mathcal O_{W}(M))$ is 
surjective. Therefore, we can replace $X$ with $W$ and use induction on the dimension. 

\medskip

$\bullet$ If $x \not\in\Nqklt(X, \omega)$, then $X$ is normal at $x$. 
Let $\nu: \widetilde X\to X$ be the normalization. 
Then by Theorem \ref{lem-normal}, $[\widetilde X, \nu^*\omega]$ 
is a qlc pair and isomorphic to $[X, \omega]$
in a neighborhood of $x$. To prove that $|M|$ is basepoint-free at $x$,
we try to prove that
$|\nu^*M|$ is basepoint-free at $\widetilde x:= \nu^{-1}(x)$.
To descend the obtained section back to $X$, we need more freeness for $|\nu^*M|$.
That is, we need that $\nu^*M$ can separated $\widetilde x$ and $\Nqklt(\widetilde X, \nu^*\omega)$.

\medskip

$\bullet$ We have deduced to show the (stronger) freeness under the assumption that $X$ is normal.
By using Theorem \ref{u-thm}, we can take a 
boundary $\mathbb R$-divisor $\Delta$ on 
$X$ such that $K_X+\Delta\sim _{\mathbb R} \omega+\varepsilon N$ for 
$0<\varepsilon \ll 1$ and  $(X, \Delta)$ is 
klt in a neighborhood of $x$. That is, by a small perturbation, we can ``almost''
view $[X,\omega]$ as a klt pair from now on, although
such a perturbation will weaken Fujita's condition a little bit.
Thanks to the first condition in Fujita's condition,
we can set the volume of $N$ bigger to offset the negative effect of this  perturbation.
Note that if \cite[Conjecture 1.5]{fujino-haidong} is true, then 
we don't need such a perturbation.

\medskip

$\bullet$ Next we assume that $x$ is not a terminal point on $X$.
Let $h:\widetilde X \to X$ be the terminalization by Lemma \ref{term-lem}
and $[\widetilde X, h^*\omega]$ be the induced qlc pair.
Then $\dim h^{-1}(x) \geq 1$ and thus we can choose a general point 
$\widetilde x \in h^{-1}(x)$ smooth on $\widetilde X$.
To prove that $|M|$ is basepoint-free at $x$ (and separate $\Nqklt( X, \omega)$),
we try to prove that
$|h^*M|$ is basepoint-free at $\widetilde x$
(and separate $\Nqklt(\widetilde X, h^*\omega)$).
Note that at this point, we get a qlc pair $[\widetilde X, h^*\omega]$ whose singularities are better,
but we lose Fujita's condition partially and ampleness on it.
Therefore, we need to go up to $\widetilde X$ to seach for high multiplicity (such that
$c=\Sigma c_i<1$ in the inductive procedure), 
and go down to $X$ to use ampleness and the vanishing theorem.

\medskip

$\bullet$ Finally we turn to consider that $x$ is a terminal point on $X$.
Let $p:X' \to X$ be the global index one cover defined in Section \ref{sec4}.
At this point, we only have a neighborhood $X'_0$ of point $x'=p^{-1}(x)_{\red}$
such that $(X'_0, \Delta'_0)$ is a klt pair (where $\Delta'_0=p^*\Delta|_{X'_0}$).
But Fujita's condition is kept for those possible qlc minimal centers passing through $x'$
coming from the inductive procedure.
Again, we go up to $X'$ to seach for high multiplicity, 
and go down to $X$ to use ampleness and the vanishing theorem.
\medskip

We strongly recommend those interested readers to read \cite{liu} and \cite{fujino-haidong-freeness}
as a warm-up on basepoint-freeness for quasi-log canonical singularities. 
Note also that Angehrn--Siu type effective freeness for 
quasi-log canonical pairs can be proved by above strategy without using inversion of adjunction 
for quasi-log canonical pairs in \cite[Theorem 2.10]{liu}.

\begin{ack}
The author would like to thank Professor Osamu Fujino all the time for his great 
support and useful suggestions. He would also like to thank Chen Jiang for many discussions on
Section \ref{sec4} and Kenta Hashizume for discussions on terminalization of generalized polarized 
pair.
\end{ack}

We will work over $\mathbb C$, the 
complex number field, throughout this paper. A scheme means
a separated scheme of finite 
type over $\mathbb C$. A variety means a reduced scheme, that is, a
reduced separated scheme of finite 
type over $\mathbb C$. 
We sometimes assume that a variety 
is irreducible 
without mentioning it explicitly if there is no risk of 
confusion. We will freely use the standard notation of 
the minimal model program and the 
theory of quasi-log schemes as in  \cite{fujino-funda} and  \cite{fujino-foundations}.
For the details of semi-log canonical pairs, see \cite{fujino-funda-slc}.

\section{Quasi-log schemes}\label{sec2}

In this section, we collect some basic definitions and 
explain some results on quasi-log schemes. 

\begin{defn}[$\mathbb R$-divisors]\label{u-def2.1}
Let $X$ be an equidimensional variety, 
which is not necessarily regular in codimension one. 
Let $D$ be an $\mathbb R$-divisor, 
that is, $D$ is a finite formal sum $\sum _i d_i D_i$, where 
$D_i$ is an irreducible reduced closed subscheme of $X$ of 
pure codimension one and $d_i$ is a real number for every $i$ 
such that $D_i\ne D_j$ for $i\ne j$. We put 
\begin{equation*}
D^{<1} =\sum _{d_i<1}d_iD_i, \quad 
D^{\leq 1}=\sum _{d_i\leq 1} d_i D_i, \quad 
D^{> 1}=\sum _{d_i>1} d_i D_i, \quad 
\text{and} \quad 
D^{=1}=\sum _{d_i=1}D_i. 
\end{equation*}
We also put 
$$
\lceil D\rceil =\sum _i \lceil d_i \rceil D_i \quad \text{and} 
\quad 
\lfloor D\rfloor=-\lceil -D\rceil, 
$$
where $\lceil d_i\rceil$ is the integer defined by $d_i\leq 
\lceil d_i\rceil <d_i+1$. 
When $D=D^{\leq 1}$ holds, 
we usually say that 
$D$ is a {\em{subboundary}} $\mathbb R$-divisor. 

Let $B_1$ and $B_2$ be $\mathbb R$-Cartier divisors on $X$. 
Then $B_1\sim _{\mathbb R} B_2$ means that 
$B_1$ is $\mathbb R$-linearly equivalent 
to $B_2$. 
\end{defn}

Let us quickly recall singularities of pairs for the reader's convenience. 
We recommend the reader to see \cite[Section 2.3]{fujino-foundations} 
for the details.  

\begin{defn}[Singularities of pairs]\label{u-def2.2} 
Let $X$ be a normal variety and let $\Delta$ be 
an $\mathbb R$-divisor on $X$ such that 
$K_X+\Delta$ is $\mathbb R$-Cartier. 
Let $f:Y\to X$ be a projective 
birational morphism from a smooth variety $Y$. 
Then we can write 
$$
K_Y=f^*(K_X+\Delta)+\sum _E a(E, X, \Delta)E, 
$$ 
where $a(E, X, \Delta)\in \mathbb R$ and $E$ is a prime 
divisor on $Y$. 
By taking $f:Y\to X$ suitably, 
we can define $a(E, X, \Delta)$ for any prime 
divisor $E$ {\em{over}} $X$ and call it 
the {\em{discrepancy}} of $E$ with respect to $(X, \Delta)$. 
If $a(E, X, \Delta)>-1$ (resp.~$a(E, X, \Delta)\geq -1$) holds 
for any prime divisor $E$ over $X$, then 
we say that $(X, \Delta)$ is {\em{sub klt}} (resp.~{\em{sub 
log canonical}}). 
If $(X, \Delta)$ is sub klt (resp.~sub log canonical) 
and $\Delta$ is effective, then we say that 
$(X, \Delta)$ is {\em{klt}} (resp.~{\em{log canonical}}). 
If $(X, \Delta)$ is log canonical 
and $a(E, X, \Delta)>-1$ for any prime divisor 
$E$ that is exceptional over $X$, 
then we say that $(X, \Delta)$ is {\em{plt}}. 

If there exist a projective birational morphism 
$f:Y\to X$ from a smooth 
variety $Y$ and a prime divisor $E$ on $Y$ such that 
$a(E, X, \Delta)=-1$ and $(X, \Delta)$ is log canonical 
in a neighborhood of the generic point of $f(E)$, 
then $f(E)$ is called a {\em{log canonical center}} of 
$(X, \Delta)$.  
\end{defn}

\begin{defn}[Multiplier ideal sheaves]\label{u-def2.3}
Let $X$ be a normal variety and let $\Delta$ 
be an effective $\mathbb R$-divisor 
on $X$ such that $K_X+\Delta$ is $\mathbb R$-Cartier. 
Let $f:Y\to X$ be a projective 
birational morphism 
from a smooth variety 
such that 
$$
K_Y+\Delta_Y=f^*(K_X+\Delta)
$$ 
and $\Supp \Delta_Y$ is a simple normal crossing 
divisor on $Y$. 
We put 
$$
\mathcal J(X, \Delta)=f_*\mathcal O_Y(-\lfloor \Delta_Y\rfloor)
$$ 
and call it the {\em{multiplier ideal 
sheaf}} of $(X, \Delta)$. 
We can easily check that 
$\mathcal J(X, \Delta)$ is a well-defined 
ideal sheaf on $X$. 
The closed subscheme defined by 
$\mathcal J(X, \Delta)$ is denoted by 
$\Nklt(X, \Delta)$. 
\end{defn}

The notion of {\em{globally embedded simple normal crossing 
pairs}} plays a crucial role in the theory of quasi-log schemes 
described in \cite[Chapter 6]{fujino-foundations}. 

\begin{defn}[Globally embedded simple normal 
crossing pairs]\label{u-def2.4} 
Let $Y$ be a simple normal crossing 
divisor on a smooth variety $M$ and let $B$ be 
an $\mathbb R$-divisor 
on $M$ such that 
$Y$ and $B$ have no common irreducible components and 
that the support of $Y+B$ is a simple normal crossing divisor on $M$. In this 
situation, $(Y, B_Y)$, where $B_Y:=B|_Y$, 
is called a {\em{globally embedded simple 
normal crossing pair}}. 
A {\em{stratum}} of $(Y, B_Y)$ means a log canonical 
center of $(M, Y+B)$ included in $Y$.  
\end{defn}

Let us recall the notion of {\em{quasi-log schemes}}, 
which was first introduced by Florin Ambro (see \cite{ambro}). 
The following definition is slightly different from 
the original one. 
For the details, see \cite[Appendix A]{fujino-pull-back}. 
In this paper, 
we will use the framework of quasi-log schemes 
established in 
\cite[Chapter 6]{fujino-foundations}. 

\begin{defn}[Quasi-log schemes]\label{u-def2.5} 
A {\em{quasi-log scheme}} is a scheme $X$ endowed with 
an $\mathbb R$-Cartier divisor (or $\mathbb R$-line bundle) $\omega$ on $X$, 
a closed subscheme $\Nqlc(X, \omega)\subsetneq X$,  
and a finite collection $\{C\}$ of reduced and irreducible subschemes 
of $X$ such that there exists a proper morphism 
$f:(Y, B_Y)\to X$ from a globally embedded simple normal 
crossing pair $(Y, B_Y)$ satisfying 
the following properties: 
\begin{itemize}
\item[(1)] $f^*\omega\sim_{\mathbb R} K_Y+B_Y$. 
\item[(2)] The natural map 
$\mathcal O_X\to f_*\mathcal O_Y(\lceil -(B^{<1}_Y)\rceil)$ induces 
an isomorphism 
$$
\mathcal I_{\Nqlc(X, \omega)}\overset{\sim}\longrightarrow
f_*\mathcal O_Y(\lceil -(B^{<1}_Y)\rceil-\lfloor B^{>1}_Y\rfloor), 
$$ 
where $\mathcal I_{\Nqlc(X, \omega)}$ is the defining ideal sheaf of 
$\Nqlc(X, \omega)$. 
\item[(3)] The collection of subvarieties $\{C\}$ 
coincides with the images of $(Y, B_Y)$-strata that are 
not included in $\Nqlc(X, \omega)$.  
\end{itemize}
We simply write $[X, \omega]$ to denote 
the above data 
$$\left(X, \omega, f:(Y, B_Y)\to X\right)
$$ 
if there is no risk of confusion. 
We note that the subvarieties $C$ are called the {\em{qlc strata}} 
of $\left(X, \omega, f:(Y, B_Y)\to X\right)$ or simply of $[X, \omega]$. 
If $C$ is a qlc stratum of $[X, \omega]$ but is not an irreducible 
component of $X$, then $C$ is called a {\em{qlc center}} 
of $[X, \omega]$. 
The union of all qlc centers of $[X, \omega]$ is denoted by 
$\Nqklt(X, \omega)$. 
\end{defn}

If $B_Y$ is a subboundary $\mathbb R$-divisor, 
then $[X, \omega]$ in Definition \ref{u-def2.5} is 
called a {\em{quasi-log canonical pair}}. 

\begin{defn}[Quasi-log canonical pairs]\label{u-def2.6}
Let $(X, \omega, f:(Y, B_Y)\to X)$ be a quasi-log scheme as 
in Definition \ref{u-def2.5}. 
We say that $(X, \omega, f:(Y, B_Y)\to X)$ or simply $[X, \omega]$ 
is a {\em{quasi-log canonical pair}} ({\em{qlc pair}}, for short) 
if $\Nqlc(X, \omega)=\emptyset$. 
Note that the condition $\Nqlc(X, \omega)=\emptyset$ is equivalent 
to $B^{>1}_Y=0$, that is, $B_Y=B^{\leq 1}_Y$. 
\end{defn}

One of the most important results in the theory of quasi-log schemes 
is the following theorem. 

\begin{thm}\label{thm-av} 
Let $[X,\omega]$ be a 
quasi-log scheme and let $X'$ be the union of $\Nqlc(X, \omega)$ with a 
$($possibly empty$)$ union 
of some qlc strata of $[X,\omega]$. 
Then we have the following properties.
\begin{itemize}
\item[(i)] {\em{(Adjunction)}}.~Assume 
that $X'\neq \Nqlc(X, \omega)$. 
Then $[X', \omega']$ is a quasi-log scheme with $\omega'=\omega \vert_{X'}$ 
and $\Nqlc(X', \omega')=\Nqlc(X, \omega)$. Moreover, the qlc 
strata of $[X',\omega']$ are exactly the qlc strata 
of $[X,\omega]$ that are included in $X'$.
\item[(ii)] {\em{(Vanishing theorem)}}.~Assume 
that $ \pi: X \rightarrow S$ is a proper 
morphism between schemes. 
Let $L$ be a Cartier divisor on $X$ such that 
$L-\omega$ is nef and log big over $S$ 
with respect to $[X,\omega]$, that is, 
$L-\omega$ is $\pi$-nef and 
$(L-\omega)|_C$ is $\pi$-big for every qlc stratum $C$ of $[X, \omega]$. 
Then $R^{i}\pi_{*}(\mathcal {I}_{X'}\otimes \mathcal{O}_{X}(L))=0$ 
for every $i>0$, where $\mathcal{I}_{X'}$ is the 
defining ideal sheaf of $X'$ on $X$.
\end{itemize}
\end{thm}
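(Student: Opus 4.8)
The plan is to prove both statements by descending everything to the globally embedded simple normal crossing pair $(Y, B_Y)$ that realizes the quasi-log structure via the proper morphism $f:(Y, B_Y)\to X$, and then to invoke the fundamental injectivity and vanishing theorems for simple normal crossing pairs (Fujino's generalizations of the Koll\'ar--Esnault--Viehweg vanishing theorems). First I would recall from Definition \ref{u-def2.5} that $f^*\omega\sim_{\mathbb R} K_Y+B_Y$, that the defining ideal $\mathcal I_{\Nqlc(X,\omega)}$ is identified with $f_*\mathcal O_Y(\lceil -(B_Y^{<1})\rceil-\lfloor B_Y^{>1}\rfloor)$, and that the qlc strata of $[X,\omega]$ are exactly the images of those strata of $(Y, B_Y)$ not contained in $\Nqlc(X,\omega)$. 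These three facts are the bridge between the geometry on $X$ and the cohomological machinery available on the simple normal crossing pair.

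For the adjunction statement (i), the idea is to build the realizing simple normal crossing pair for $X'$ directly out of $(Y,B_Y)$. I would let $Y'$ be the union of those strata of $(Y, B_Y)$ that are mapped into $X'$ by $f$, equipped with the restriction $B_{Y'}:=B_Y|_{Y'}$; after a suitable blow-up of the ambient smooth variety $M$ from Definition \ref{u-def2.4}, I can arrange that $Y'$ is again a simple normal crossing divisor which is a union of strata, so that $(Y', B_{Y'})$ is a globally embedded simple normal crossing pair and $f|_{Y'}:(Y',B_{Y'})\to X'$ is proper. Property (1) for $[X',\omega']$ is immediate from restricting $f^*\omega\sim_{\mathbb R} K_Y+B_Y$ together with adjunction on $M$. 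Properties (2) and (3) are the substantive part: I would need to show that $\mathcal I_{\Nqlc(X',\omega')}$ matches $(f|_{Y'})_*\mathcal O_{Y'}(\lceil -(B_{Y'}^{<1})\rceil-\lfloor B_{Y'}^{>1}\rfloor)$ and that the qlc strata of $[X',\omega']$ are precisely those of $[X,\omega]$ lying in $X'$. The matching of defining ideals is forced by the injectivity theorem, which guarantees that the restriction map from $Y$ to $Y'$ induces the expected exact sequence of pushforwards with no spurious kernel or cokernel supported away from $\Nqlc(X,\omega)$; this also yields $\Nqlc(X',\omega')=\Nqlc(X,\omega)$, since the $B_Y^{>1}$ part of $B_Y$ is carried along unchanged.

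For the vanishing statement (ii), I would pull everything back along $f$. Writing $A=f^*L$, the hypothesis that $L-\omega$ is nef and log big over $S$ translates into $A-(K_Y+B_Y)\sim_{\mathbb R} f^*(L-\omega)$ being $(\pi\circ f)$-nef and strictly positive on every stratum of $(Y,B_Y)$. Then the fundamental vanishing theorem for simple normal crossing pairs applies to give $R^i(\pi\circ f)_*\bigl(\mathcal O_Y(A+\lceil -(B_Y^{<1})\rceil-\lfloor B_Y^{>1}\rfloor)\otimes\mathcal I_{Y''}\bigr)=0$ for every $i>0$, where $Y''$ is the union of strata corresponding to $X'$. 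Combining this with the identification of $\mathcal I_{X'}\otimes\mathcal O_X(L)$ as the relevant pushforward from $Y$ (the same identification as in condition (2) of the qlc structure, now twisted by $L$), a Leray spectral sequence argument together with the vanishing of the higher direct images $R^jf_*$ of the sheaf in question collapses everything to $R^i\pi_*(\mathcal I_{X'}\otimes\mathcal O_X(L))=0$ for $i>0$.

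The main obstacle, and the genuine content, is the reduction to and application of the fundamental injectivity and vanishing theorems for simple normal crossing pairs: one must verify that after the blow-ups needed to put $Y'$ (or $f^{-1}(X')$) into the required simple normal crossing position, the numerical positivity ``nef and log big'' is preserved on all strata, and that the pushforward identifications are unaffected by these modifications. The log bigness hypothesis is precisely what controls positivity on every qlc center, so that the vanishing theorem can be applied stratum by stratum; without it the higher direct images need not vanish. Establishing the ideal-sheaf isomorphism in (2) for $X'$ --- that is, ruling out unexpected contributions from higher-codimension strata --- is where the injectivity theorem does the essential work.
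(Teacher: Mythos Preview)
The paper does not give its own proof of this theorem; it simply cites \cite[Theorem 6.3.5]{fujino-foundations} (see the sentence immediately following the statement). Your outline is the standard argument found there: realize $[X,\omega]$ by $f:(Y,B_Y)\to X$, cut out the union $Y'$ of strata mapping into $X'$ to obtain the induced quasi-log structure on $X'$, and deduce both the ideal-sheaf identification and the vanishing from the torsion-free, injectivity, and vanishing theorems for globally embedded simple normal crossing pairs established in \cite[Chapter 5]{fujino-foundations}. So your approach is correct and coincides with the one the paper defers to; there is nothing to compare, since the paper contains no independent proof.
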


For the proof of Theorem \ref{thm-av}, see, 
for example, \cite[Theorem 6.3.5]{fujino-foundations}. 
We note that we generalized Koll\'ar's torsion-free and 
vanishing theorems in \cite[Chapter 5]{fujino-foundations} 
by using the theory of mixed Hodge structures on cohomology with 
compact support 
in order to establish Theorem \ref{thm-av}. 

\medskip

The following theorem is a special case of \cite[Theorem 1.5]
{fujino-slc-trivial}. It is a deep result 
based on the theory of variations of mixed Hodge structures 
on cohomology with compact support. 

\begin{thm}[{\cite[Theorem 1.5]{fujino-slc-trivial}}]\label{u-thm} 
Let $[X, \omega]$ be a quasi-log canonical pair such that 
$X$ is a normal projective irreducible variety. 
Then there exists a projective 
birational morphism 
$p:X'\to X$ from a smooth projective 
variety $X'$ such that 
$$
K_{X'}+B_{X'}+M_{X'}=p^*\omega, 
$$ 
where $B_{X'}$ is a subboundary $\mathbb R$-divisor, that is, 
$B_{X'}=B^{\leq 1}_{X'}$, 
such that $\Supp B_{X'}$ is a simple normal crossing 
divisor and that $p_*B_{X'}$ is effective,  
and $M_{X'}$ is a nef $\mathbb R$-divisor on $X'$. 
Furthermore, we can make $B_{X'}$ satisfy 
$p(B^{=1}_{X'})=\Nqklt(X, \omega)$.  
\end{thm}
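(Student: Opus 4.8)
The plan is to realise the defining data of the qlc pair as an slc-trivial fibration over $X$ and to read off $B_{X'}$ and $M_{X'}$ from the canonical bundle formula, the nef part coming from Hodge-theoretic semipositivity. First I would take the proper morphism $f\colon (Y,B_Y)\to X$ from the globally embedded simple normal crossing pair that defines $[X,\omega]$, so that $f^*\omega\sim_{\mathbb R} K_Y+B_Y$; since $[X,\omega]$ is quasi-log canonical we have $B_Y=B_Y^{\le 1}$ by Definition \ref{u-def2.6}. Because $X$ is normal, irreducible and projective, it is itself a qlc stratum, so some stratum of $(Y,B_Y)$ dominates $X$. Restricting attention to the union of the dominating strata and noting that on a general fibre $F$ one has $K_F+B_F=(K_Y+B_Y)|_F\sim_{\mathbb R} f^*\omega|_F\sim_{\mathbb R}0$, I obtain an slc-trivial fibration in the sense of \cite{fujino-slc-trivial}; the strata not dominating $X$ map into a proper closed subset and will be absorbed into the discriminant.

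Next I would apply the canonical bundle formula for this slc-trivial fibration. It produces a discriminant $\mathbb R$-b-divisor $\mathbf B$ and a moduli $\mathbb R$-b-divisor $\mathbf M$ on $X$ with $K_Y+B_Y\sim_{\mathbb R} f^*(K_X+\mathbf B_X+\mathbf M_X)$, and hence $\omega\sim_{\mathbb R} K_X+\mathbf B_X+\mathbf M_X$. By construction the coefficient of a prime divisor $P$ in $\mathbf B$ is $1$ minus the associated log canonical threshold, so $\mathbf B$ is a subboundary, and its trace on $X$ is effective, which reflects condition $(2)$ of Definition \ref{u-def2.5} that the ambient pair is genuinely (not merely sub-) quasi-log canonical. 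I would then choose a sufficiently high birational model $p\colon X'\to X$ from a smooth projective variety on which both $\mathbf B$ and $\mathbf M$ descend, and set $B_{X'}=\mathbf B_{X'}$ and $M_{X'}=\mathbf M_{X'}$. On such a model $\Supp B_{X'}$ is a simple normal crossing divisor, $B_{X'}=B_{X'}^{\le 1}$, and $p_*B_{X'}=\mathbf B_X$ is effective; adjusting $M_{X'}$ within its $\mathbb R$-linear equivalence class (which does not affect nefness) turns the relation into the asserted equality $K_{X'}+B_{X'}+M_{X'}=p^*\omega$.

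For the final assertion I would match log canonical centres with qlc centres. The coefficient-one components of $B_{X'}$, together with the higher-codimensional lc centres of $(X',B_{X'})$, are exactly the images of the lc places of the slc-trivial fibration, and these images are precisely the qlc centres of $[X,\omega]$ by the characterisation of qlc strata as images of $(Y,B_Y)$-strata in Definition \ref{u-def2.5}. Passing to a high enough $p\colon X'\to X$ so that every such centre is a stratum of $(X',B_{X'})$, I can then arrange $p(B_{X'}^{=1})=\Nqklt(X,\omega)$.

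The substantial point, and the reason the cited statement is called a deep result, is the nefness of $M_{X'}$. This is not formal: it rests on the semipositivity of the moduli b-divisor $\mathbf M$, which is established through the theory of variations of mixed Hodge structure on the cohomology with compact support of the fibres of $f$. Proving that $\mathbf M$ is b-nef and descends to an honest nef $\mathbb R$-divisor on a fixed model is the core Hodge-theoretic input; granting it, the remainder is the canonical bundle formalism together with resolution of singularities. This is why I would ultimately invoke \cite[Theorem 1.5]{fujino-slc-trivial} rather than reprove the semipositivity here.
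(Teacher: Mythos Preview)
Your outline is a faithful sketch of how the cited result is actually proved in \cite{fujino-slc-trivial}: realise the qlc data as a basic slc-trivial fibration, apply the canonical bundle formula to extract the discriminant and moduli parts, and invoke the Hodge-theoretic semipositivity theorem to get nefness of $M_{X'}$ on a sufficiently high model. You also correctly identify that the serious content is the b-nefness of the moduli part, which is not reproved here.

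That said, note that the present paper does \emph{not} give its own proof of Theorem~\ref{u-thm}: it merely states the result as a special case of \cite[Theorem 1.5]{fujino-slc-trivial} and uses it as a black box. So there is nothing to compare your argument against in this paper; your write-up is essentially a summary of the proof in the cited reference, and as such it is appropriate. One small refinement: the reduction to the case where every stratum of $(Y,B_Y)$ dominates $X$ is not quite ``restricting to the dominating strata'' but rather a modification of the quasi-log resolution (cf.\ the proof of Lemma~\ref{term-lem} in this paper, or \cite{fujino-haidong}); otherwise your description is accurate.
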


Next theorem will play an important role as we explained in
the strategy of introduction.

\begin{thm}[{\cite[Theorem 1.1]{fujino-haidong}}]\label{lem-normal} 
Let $[X, \omega]$ be a quasi-log canonical pair such that 
$X$ is irreducible. 
Let $\nu: \widetilde{X}\to X$ be the normalization. 
Then $[\widetilde{X}, \nu^*\omega]$ naturally becomes a 
quasi-log canonical pair with 
the following properties: 
\begin{itemize}
\item[(i)] if $C$ is a qlc center of $[\widetilde{X}, \nu^*\omega]$, 
then $\nu(C)$ is a qlc center of $[X, \omega]$, 
and 
\item[(ii)] $\Nqklt(\widetilde{X}, \nu^*\omega)=\nu^{-1}(\Nqklt(X, \omega))$. 
More precisely, the equality 
$$
\nu_*\mathcal I_{\Nqklt(\widetilde{X}, \nu^*\omega)}=
\mathcal I_{\Nqklt(X, \omega)}
$$ 
holds, where 
$\mathcal I_{\Nqklt(X, \omega)}$ 
and $\mathcal I_{\Nqklt(\widetilde{X}, \nu^*\omega)}$ are the defining ideal sheaves of 
$\Nqklt(X, \omega)$ and $\Nqklt(\widetilde{X}, \nu^*\omega)$ respectively. 
\end{itemize}  
\end{thm}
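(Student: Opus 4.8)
The plan is to transport the quasi-log structure of $[X,\omega]$ across the normalization and to verify Definition \ref{u-def2.5} on $\widetilde X$. Since $X$ is irreducible, $\nu\colon\widetilde X\to X$ is finite and birational. Starting from a defining morphism $f\colon(Y,B_Y)\to X$ of $[X,\omega]$, I would form the fibre product $Y\times_X\widetilde X$ and let $\psi\colon\widetilde Y\to Y$ be a globally embedded simple normal crossing model obtained by resolving its normalization, with the induced proper morphism $g\colon\widetilde Y\to\widetilde X$ satisfying $\nu\circ g=f\circ\psi$; note that $\psi$ is birational over the components of $Y$ dominating $X$ and finite elsewhere, but in either case we simply \emph{define} $B_{\widetilde Y}$ by the crepant pullback $K_{\widetilde Y}+B_{\widetilde Y}=\psi^{*}(K_Y+B_Y)$. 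Then $B_{\widetilde Y}$ is again a subboundary, since finite and birational pullbacks of sub log canonical pairs are sub log canonical, with simple normal crossing support, so that $(\widetilde Y,B_{\widetilde Y})$ is a globally embedded simple normal crossing pair. Condition (1) of Definition \ref{u-def2.5} then holds by construction, because $g^{*}\nu^{*}\omega=\psi^{*}f^{*}\omega\sim_{\mathbb R}\psi^{*}(K_Y+B_Y)=K_{\widetilde Y}+B_{\widetilde Y}$, and $\Nqlc(\widetilde X,\nu^{*}\omega)=\emptyset$ will follow at once from $B_{\widetilde Y}^{>1}=0$.

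The heart of the matter is condition (2), namely that the natural map induces an isomorphism $\mathcal O_{\widetilde X}\xrightarrow{\sim}g_{*}\mathcal O_{\widetilde Y}(\lceil-(B_{\widetilde Y}^{<1})\rceil)$; this is exactly the statement that $g\colon(\widetilde Y,B_{\widetilde Y})\to\widetilde X$ defines a quasi-log scheme, and, combined with the subboundary property, that $[\widetilde X,\nu^{*}\omega]$ is quasi-log canonical. I expect this to be the main obstacle. The naive strategy of pushing the identity $\mathcal O_X\cong f_{*}\mathcal O_Y(\lceil-(B_Y^{<1})\rceil)$ forward along $\nu$ does not work directly: pushing the sheaf on $\widetilde Y$ down to $Y$ fails to recover the corresponding sheaf on $Y$, precisely because the double locus of $\widetilde Y$ acquires coefficient-one components that see across the non-normal locus of $X$ (this is already visible when $X$ is a nodal curve). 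Instead I would argue locally on the normal variety $\widetilde X$: over the open locus where $\nu$ is an isomorphism the required isomorphism is the given one on $X$, and on the complement, which has codimension at least one, I would use the normality of $\widetilde X$ together with a codimension-one analysis of $g$ and the Koll\'ar-type torsion-freeness underlying Theorem \ref{thm-av} to show that the sections of $g_{*}\mathcal O_{\widetilde Y}(\lceil-(B_{\widetilde Y}^{<1})\rceil)$ extend to regular functions, forcing the sheaf to equal $\mathcal O_{\widetilde X}$. The delicate point throughout is to track the coefficient-one components of $B_{\widetilde Y}$ lying over the conductor and to confirm that they account for exactly the difference between $\mathcal O_X$ and $\nu_{*}\mathcal O_{\widetilde X}$, neither creating nor destroying strata over the normal locus.

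Once $[\widetilde X,\nu^{*}\omega]$ is known to be quasi-log canonical, properties (i) and (ii) follow from stratum bookkeeping along $\nu\circ g=f\circ\psi$. Since $\psi$ is a crepant modification, the $(\widetilde Y,B_{\widetilde Y})$-strata map via $\psi$ onto the $(Y,B_Y)$-strata; hence for a qlc center $C$ of $[\widetilde X,\nu^{*}\omega]$, which is the $g$-image of some stratum, its image $\nu(C)$ is the $f$-image of the corresponding stratum and is therefore a qlc stratum of $[X,\omega]$, while it remains a proper subvariety because $\nu$ is finite, giving (i). For (ii), I would use the description of the non-klt defining ideal as the pushforward $f_{*}\mathcal O_Y(\lceil-(B_Y^{<1})\rceil-B_Y^{=1})$, and its analogue on $\widetilde Y$, together with the already-established comparison of pushforwards and the finiteness and birationality of $\nu$, to obtain the sheaf equality $\nu_{*}\mathcal I_{\Nqklt(\widetilde X,\nu^{*}\omega)}=\mathcal I_{\Nqklt(X,\omega)}$; the set-theoretic equality $\Nqklt(\widetilde X,\nu^{*}\omega)=\nu^{-1}(\Nqklt(X,\omega))$ is then a formal consequence. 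Here again the only subtle input is the matching of the coefficient-one boundary components under $\psi$, which is governed by the same conductor analysis used for condition (2).
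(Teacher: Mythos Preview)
The paper does not prove this statement; it is quoted from \cite{fujino-haidong} (Theorem~1.1 there) and used as a black box throughout. There is therefore no in-paper argument to compare your sketch against, and the honest comparison is with the cited source rather than with anything here.

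On the merits of your outline: the fibre-product construction is more delicate than you indicate, and the proposal does not actually close the main gap it identifies. Over components of $Y$ mapping into the non-normal locus, $Y\times_X\widetilde X$ need not be reduced or equidimensional, and even over a component $Y_i$ dominating $X$ the fibre product can acquire embedded or extra components supported over the conductor; passing to a ``globally embedded simple normal crossing model of its normalization'' and then declaring $B_{\widetilde Y}$ by crepant pullback does not automatically yield a pair satisfying the structural constraints of Definition~\ref{u-def2.4}. More importantly, your discussion of condition~(2) is precisely where the content of the theorem lies, and you leave it as an acknowledged obstacle: the locus where $\nu$ fails to be an isomorphism can have codimension exactly one in $\widetilde X$, so normality alone does not let you extend the isomorphism $\mathcal O_{\widetilde X}\to g_*\mathcal O_{\widetilde Y}(\lceil -(B_{\widetilde Y}^{<1})\rceil)$ across it, and the ``codimension-one analysis'' and ``conductor analysis'' you invoke are the entire proof, not a routine step. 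The same applies to (ii): the ideal-sheaf equality $\nu_*\mathcal I_{\Nqklt(\widetilde X,\nu^*\omega)}=\mathcal I_{\Nqklt(X,\omega)}$ is not a formal consequence of stratum bookkeeping but requires the same comparison of pushforwards that you have deferred. As written, this is a plausible plan with the key step missing rather than a proof.
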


We prepare one more useful lemma for our paper.
It is a kind of 
terminalization, which is well known to experts in the framework of log canonical pairs.

\begin{lem}[terminalization]\label{term-lem}
Let $\left(X, \omega, f:(Y, B_Y)\to X\right)$ be a quasi-log canonical 
pair such that 
$X$ is a normal irreducible variety. 
Then there is a morphism $h: \widetilde{X} \to X$ where $\widetilde{X}$ is a normal $\mathbb Q$-factorial
 terminal variety and an induced quasi-log canonical 
pair $\left(\widetilde{X}, \widetilde{\omega}, 
\widetilde{f}:(\widetilde{Y}, B_{\widetilde{Y}})\to \widetilde{X}\right)$  where 
$\widetilde{\omega}=h^* \omega$.
\end{lem}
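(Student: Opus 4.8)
The plan is to construct $h\colon \widetilde X\to X$ by combining a log resolution of the given quasi-log structure with a relative minimal model program that extracts exactly the divisors of discrepancy $-1$, mimicking the standard construction of a $\mathbb Q$-factorial terminalization (equivalently, a dlt blow-up followed by running an MMP to terminalize) in the log canonical setting. Concretely, I would first apply Theorem \ref{u-thm} to the normal irreducible projective qlc pair $[X,\omega]$, obtaining a projective birational morphism $p\colon X'\to X$ from a smooth $X'$ with
\begin{equation*}
K_{X'}+B_{X'}+M_{X'}=p^*\omega,
\end{equation*}
where $B_{X'}$ is a subboundary with simple normal crossing support, $p_*B_{X'}$ effective, $M_{X'}$ nef, and $p(B^{=1}_{X'})=\Nqklt(X,\omega)$. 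This realizes $[X,\omega]$ as a \emph{generalized polarized pair} $(X;B,M)$ of log canonical type with nef part $M$, and the problem of finding $h$ becomes the problem of constructing a $\mathbb Q$-factorial terminalization in the category of generalized polarized pairs.

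\emph{The main body of the argument} is then to run a suitable MMP on $X'$ over $X$. After passing to a higher model I may assume $B^{=1}_{X'}$ and $\Exc(p)$ are both simple normal crossing; I then run the $(K_{X'}+B_{X'}+M_{X'})$-MMP over $X$ with scaling of an ample divisor, adapted to the generalized setting. Because $K_{X'}+B_{X'}+M_{X'}=p^*\omega$ is numerically trivial over $X$, every step of this MMP is over $X$ and the program contracts precisely the divisors with discrepancy $>-1$ (the ``klt part''), while the components of $B^{=1}_{X'}$ are preserved. The output is a model $\widetilde X$ together with the induced birational morphism $h\colon\widetilde X\to X$, on which the pushed-forward boundary has the form $B_{\widetilde X}=B^{=1}_{\widetilde X}$ (a reduced divisor) plus a nef part $M_{\widetilde X}$, and where $\widetilde X$ is $\mathbb Q$-factorial with terminal (generalized) singularities in the complement of that reduced divisor. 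The induced globally embedded simple normal crossing pair $(\widetilde Y,B_{\widetilde Y})\to\widetilde X$ is obtained by base change and one further log resolution, and one checks that conditions (1)--(3) of Definition \ref{u-def2.5} hold with $\widetilde\omega=h^*\omega$, so that $[\widetilde X,\widetilde\omega]$ is again qlc; the compatibility $K_{\widetilde Y}+B_{\widetilde Y}\sim_{\mathbb R}\widetilde f^*\widetilde\omega=\widetilde f^*h^*\omega$ follows from $\widetilde\omega=h^*\omega$ and the projection formula.

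\emph{The hard part} will be the termination and existence of this MMP in the generalized polarized category, where the nef part $M_{X'}$ is only an $\mathbb R$-divisor rather than the pullback of a $\mathbb Q$-Cartier divisor from $X$. The cleanest route is to invoke the MMP for generalized log canonical (indeed generalized lc/klt) pairs developed by Birkar--Zhang and its refinements, which supply both the existence of the required contractions and divisorial extractions and the termination of MMP with scaling for generalized pairs of general type over a base; the acknowledgment to Kenta Hashizume ``for discussions on terminalization of generalized polarized pair'' signals exactly that this step rests on that machinery. A secondary subtlety is bookkeeping the discrepancies: one must verify that the surviving exceptional divisors over $X$ are \emph{all} of log discrepancy zero (so that $h$ is genuinely a terminalization and not merely a dlt model), which is handled by choosing the initial boundary so that only the $B^{=1}$ components have coefficient $1$ and then observing that the MMP contracts every divisor with positive generalized log discrepancy. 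Once existence and termination are granted, verifying that the resulting data $(\widetilde X,\widetilde\omega,\widetilde f)$ satisfies the three axioms of a quasi-log canonical pair is routine, since $\widetilde\omega=h^*\omega$ is numerically $h$-trivial and the defining ideal sheaf $\mathcal I_{\Nqlc}=\mathcal O$ (as $[X,\omega]$ is qlc) is preserved under pushforward.
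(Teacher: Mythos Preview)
Your proposal has the right overall shape---pass to the generalized polarized pair via Theorem \ref{u-thm}, run an MMP over $X$, then terminalize---but there are two concrete gaps.

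First, the MMP you propose does nothing. You say to run the $(K_{X'}+B_{X'}+M_{X'})$-MMP over $X$, and you correctly note that $K_{X'}+B_{X'}+M_{X'}=p^*\omega$ is numerically trivial over $X$; but then $X'$ is \emph{already} a relative minimal model for this class, so no contraction occurs and your assertion that ``the program contracts precisely the divisors with discrepancy $>-1$'' is unfounded. The paper's fix is to split $B_{X'}=B^+_{X'}-B^-_{X'}$ into effective parts and run instead the $(K_{X'}+B^+_{X'}+M_{X'})$-MMP over $X$; since $K_{X'}+B^+_{X'}+M_{X'}\sim_{p,\mathbb Q}B^-_{X'}$ with $B^-_{X'}$ effective and $p$-exceptional, this MMP terminates after contracting exactly $\Supp B^-_{X'}$, yielding a $\mathbb Q$-factorial generalized dlt model $\widetilde X$ (which is then further replaced by its terminalization). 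Note also that the resulting boundary $B_{\widetilde X}$ has coefficients in $[0,1]$, not only $\{1\}$ as you claim. The reduction to $\mathbb Q$-coefficients, needed to run this MMP and to use the slc-trivial fibration machinery below, is handled in the paper by decomposing into qlc $\mathbb Q$-structures via \cite[Lemma 11.1]{fujino-slc-trivial}.

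Second, you call the verification that $[\widetilde X,\widetilde\omega]$ is qlc ``routine'', but this is the substantive half of the paper's argument. The paper keeps the \emph{same} source $(Y,B_Y)$ and factors $f$ through $h$ to get $\widetilde f\colon (Y,B_Y)\to\widetilde X$; one must then check condition (2) of Definition \ref{u-def2.5} on $\widetilde X$, namely that $\mathcal O_{\widetilde X}\to\widetilde f_*\mathcal O_Y(\lceil -B_Y^{<1}\rceil)$ is an isomorphism. Since $h$ is an isomorphism over generic points of divisors on $X$, the content lies at the generic point of each $h$-exceptional prime $P$. There the paper uses that $\widetilde f$ is an induced basic slc-trivial fibration together with the boundary inequality $0\le t_P:=\mult_P B_{\widetilde X}\le 1$ to show, via an explicit discrepancy computation, that $\lceil -B_Y^{<1}\rceil\not\succeq\widetilde f^*P$, whence the map is an isomorphism at that point. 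Your remark that $\mathcal I_{\Nqlc(X,\omega)}=\mathcal O_X$ is ``preserved under pushforward'' does not address this: the qlc condition on $\widetilde X$ is not a formal consequence of the qlc condition on $X$.
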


\begin{proof}
By a modification of $f$ (cf. \cite{fujino-haidong}), we can assume that
every stratum of $Y$ is dominant onto $X$. By \cite[Lemma 11.1]{fujino-slc-trivial},
we can decompose $\left(X, \omega, f:(Y, B_Y)\to X\right)$ 
into a combination of qlc $\mathbb Q$-structures
and prove our lemma for each qlc $\mathbb Q$-structure and then for the final $\mathbb R$-structure
by combination as the proof of \cite[Theorem 1.7]{fujino-slc-trivial}.
Therefore, we can assume that $\left(X, \omega, f:(Y, B_Y)\to X\right)$
has a $\mathbb Q$-structure and thus 
is a so-called {\em{basic slc-trivial fibration}} \cite[Definition 4.1]{fujino-slc-trivial}.
By Theorem \ref{u-thm} and further blowing ups of $(Y, B_Y)$, there is a 
commutative diagram as follows:
$$
\xymatrix{
& (Y, B_Y)\ar[ld]_-{g}\ar[d]^-f\\ 
X' \ar[r]_-{p}& X
}
$$
such that $K_{X'}+B^{+}_{X'}+M_{X'}\sim_{p, \mathbb Q}B^{-}_{X'}$ 
where $B_{X'}=B^{+}_{X'}-B^{-}_{X'}$, $B^{+}_{X'}$ and $B^{-}_{X'}$ are effective $\mathbb Q$-divisors.
Note that $(X', B^{+}_{X'})$ is log smooth and $M_{X'}$ is a nef $\mathbb Q$-divisor on $X'$. 
We can run the relative $(K_{X'}+B^{+}_{X'}+M_{X'})$-MMP over $X$ and terminate at a map
$h: \widetilde{X} \to X$ such that $B^{-}_{X'}$ is contracted. Then
we get the following commutative diagram:
$$
\xymatrix{
& (Y, B_Y)\ar[ld]_-{g}\ar[d]^-{\widetilde{f}}\ar[rd]^-{f}& \\ 
X' \ar[r]_-{\pi}& \widetilde{X} \ar[r]_-{h}& X
}
$$
where $\widetilde{X}$ is $\mathbb Q$-factorial.
Let $K_{\widetilde{X}}=\pi_* K_{X'}$, $B_{\widetilde{X}}=\pi_* B^{+}_{X'}$
and  $M_{\widetilde{X}}=\pi_* M_{X'}$. Then $\widetilde{\omega}:=K_{\widetilde{X}}+B_{\widetilde{X}}+M_{\widetilde{X}}=h^*\omega$
and $X'\to \widetilde{X} \to X$ is a generalized polarized dlt pair such that $B_{\widetilde{X}}$
is the boundary part and $M_{\widetilde{X}}$ the nef part (cf. \cite[Definition 1.4]{bz}).
Replacing $\widetilde{X}$ with its terminalization, we can further assume that $\widetilde{X}$ is terminal.
By Definition \ref{u-def2.5}, 
to prove that  $\left(\widetilde{X}, \widetilde{\omega}, 
\widetilde{f}:(Y, B_Y) \to \widetilde{X}\right)$
 is a quasi-log canonical pair,
we only need to prove that
$$
\alpha:  \mathcal O_{\widetilde{X}} \hookrightarrow 
\widetilde{f}_*\mathcal O_{Y}(\lceil -B_{Y}^{<1}\rceil)
$$
is an isomorphism.
Since $h:\widetilde{X} \to X$ is an isomorphism at the generic point of any prime divisor
on $X$,
we only need to check that $\alpha$ is isomorphic at every generic point of exceptional locus.
Let $P$ be a prime divisor in the exceptional locus and $t_P=\mult_P  B_{\widetilde{X}}$.
Since $B_{\widetilde{X}}$ is a boundary , $0\leq t_p\leq 1$.
Note that $\left(\widetilde{X}, \widetilde{\omega}, 
\widetilde{f}:(Y, B_Y) \to \widetilde{X}\right)$ is an induced basic slc-trivial fibration.
In particular, $K_Y+B_Y+(1-t_P)\widetilde{f}^*P$ is sub slc over the generic point of $P$ by 
\cite[(4.5)]{fujino-slc-trivial}. That is, 
there is a prime divisor $F$ on $Y$ dominant onto $P$, $a:=\mult_F B_Y$ and $b:=\mult_F {f}^*P$
such that $a+(1-t_P)b=1$. 
Since $\widetilde X$ is smooth in codimension two,  
the generic point of $P$ is Cartier. That is, $b$ is an integer. 
Then 
$$
\lceil - a \rceil=\lceil(1-t_P)b-1\rceil \leq b-1.
$$
This is equivalent to say that
$\lceil -B_{Y}^{<1}\rceil \npreceq f^*P$. Therefore,
$$
 \mathcal O_{\widetilde{X}} \hookrightarrow 
\widetilde{f}_*\mathcal O_{Y}(\lceil -B_{Y}^{<1}\rceil) \subsetneq
 \mathcal O_{\widetilde{X}}(P)
$$
at the generic point of $P$. That is, $\mathcal O_{\widetilde{X}} \simeq
\widetilde{f}_*\mathcal O_{Y}(\lceil -B_{Y}^{<1}\rceil)$ and this is what we want.
\end{proof}

\begin{cor}\label{term-cor}
Let $[X, \omega]$ be a qlc pair such that $X$ is normal and 
$[\widetilde{X}, \widetilde{\omega}]$ be the terminalization of $[X, \omega]$ induced
by $h: \widetilde{X} \to X$ as in Lemma \ref{term-lem}.
Let $L$ be an effective $\mathbb R$-Cartier divisor on  $X$ and $\widetilde{L}=h^*L$.
Then $[X, \omega+L]$ is qlc at a point $x$
if and only if $[\widetilde{X}, \widetilde{\omega}+\widetilde{L} ]$
is qlc at every point of $h^{-1}(x)$.
In particular, let $W$ be a connected union of qlc centers of $[X, \omega+L]$
and $\widetilde{W}$ be the union of all qlc centers of  $[\widetilde{X}, \widetilde{\omega}+\widetilde{L}]$
mapping into $W$, then $\widetilde{W}$ is also connected.
\end{cor}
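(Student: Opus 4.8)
The plan is to exploit that the terminalization diagram of Lemma \ref{term-lem} realizes both quasi-log structures through the \emph{same} globally embedded simple normal crossing model. From that diagram we have $f=h\circ\widetilde f$, and since $\widetilde L=h^*L$ we get $\widetilde f^*\widetilde L=f^*L$. Hence, after replacing $(Y,B_Y)$ by a common log resolution on which $B_Y+f^*L$ has simple normal crossing support, both $[X,\omega+L]$ and $[\widetilde X,\widetilde\omega+\widetilde L]$ are induced by the one globally embedded snc pair $(Y,B_Y+f^*L)$, the only difference being the structure morphism, $f$ resp.\ $\widetilde f$. In particular the two non-qlc loci are the images of the \emph{same} subset $\Supp\,(B_Y+f^*L)^{>1}\subseteq Y$, so that $\Nqlc(X,\omega+L)=h\bigl(\Nqlc(\widetilde X,\widetilde\omega+\widetilde L)\bigr)$, and likewise every qlc center $\widetilde f(S)$ of $[\widetilde X,\widetilde\omega+\widetilde L]$ maps under $h$ onto the qlc center $f(S)$ of $[X,\omega+L]$.

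I would first settle the equivalence. Since $\widetilde\omega+\widetilde L=h^*(\omega+L)$ and the non-qlc loci correspond as above, the point $x\notin\Nqlc(X,\omega+L)$ if and only if $h^{-1}(x)\cap\Nqlc(\widetilde X,\widetilde\omega+\widetilde L)=\emptyset$, that is, if and only if $[\widetilde X,\widetilde\omega+\widetilde L]$ is qlc at every point of $h^{-1}(x)$; this is exactly the asserted equivalence. For the connectedness statement I would next record that $\widetilde W\to W$ (the restriction of $h$) is proper and surjective: it is proper because $\widetilde W$ is closed and $h$ is proper, and it is surjective because each qlc center $C\subseteq W$ equals $f(S)$ for a stratum $S$, whence $\widetilde f(S)$ is a qlc center of $[\widetilde X,\widetilde\omega+\widetilde L]$---it is not a component of $\widetilde X$, as $\dim\widetilde f(S)<\dim\widetilde X$, and it is not contained in $\Nqlc(\widetilde X,\widetilde\omega+\widetilde L)$, since otherwise its image $C$ would lie in $\Nqlc(X,\omega+L)$---with $h(\widetilde f(S))=C\subseteq W$, so $C\subseteq h(\widetilde W)$.

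Now $W$ is connected, so by the standard fact that a proper surjective morphism onto a connected base all of whose fibers are connected has connected source, it suffices to show that each fiber $\widetilde W\cap h^{-1}(w)$, $w\in W$, is connected. I would deduce this from the connectedness lemma for quasi-log canonical pairs: since $\widetilde\omega+\widetilde L=h^*(\omega+L)$ is $\mathbb R$-linearly trivial over $X$, that lemma applies to $h:\widetilde X\to X$ and yields that $\Nqklt(\widetilde X,\widetilde\omega+\widetilde L)\cap h^{-1}(w)$ is connected for every $w$. It then remains to match this connected set with the fiber $\widetilde W\cap h^{-1}(w)$, i.e.\ with the union of those qlc centers that map \emph{into} $W$.

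This last matching is the heart of the matter and the step I expect to require the most care. A qlc center $\widetilde C$ with $h(\widetilde C)\not\subseteq W$ may still meet $h^{-1}(w)$, contributing to $\Nqklt(\widetilde X,\widetilde\omega+\widetilde L)\cap h^{-1}(w)$ points that are not \emph{a priori} in $\widetilde W$; a naive configuration of this kind would make $\widetilde W\cap h^{-1}(w)$ a proper subset of the non-qlc fiber. To rule this out I would use that $w$ lies in $W\cap h(\widetilde C)$, that intersections of qlc centers are again unions of qlc centers both on $X$ and on $\widetilde X$, and the center-correspondence under $h$, in order to show that any such point already lies on a qlc center contained in $h^{-1}(W)$---equivalently that $\widetilde W\cap h^{-1}(w)=\Nqklt(\widetilde X,\widetilde\omega+\widetilde L)\cap h^{-1}(w)$. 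Granting this identification, the connectedness lemma supplies the connected fibers and the gluing step finishes the proof. The delicate bookkeeping in reconciling ``centers mapping into $W$'' with the full non-qlc locus controlled by the connectedness lemma is the main obstacle, and it is precisely where the constraint imposed by the connectedness lemma must be fed back to guarantee that the extra fiber points are absorbed into centers lying over $W$.
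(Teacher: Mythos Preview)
Your treatment of the equivalence ``$[X,\omega+L]$ is qlc at $x$ iff $[\widetilde X,\widetilde\omega+\widetilde L]$ is qlc along $h^{-1}(x)$'' is correct and is exactly the paper's argument: both structures are computed from the same globally embedded snc pair $(Y,B_Y+f^*L)$, so the non-qlc loci correspond under $h$ and the claim is immediate from Definition~\ref{u-def2.6}.

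For the connectedness of $\widetilde W$, however, the paper takes a much shorter and cleaner route than yours. It does not descend to $\widetilde X$ and invoke a connectedness lemma for $\Nqklt$; it stays on the common snc model. By adjunction (Theorem~\ref{thm-av}(i)), $W$ and $\widetilde W$ are the images, under $f$ and $\widetilde f$ respectively, of the \emph{same} union $Y'$ of strata of $(Y,B_Y+f^*L)$. The structure-sheaf condition in Definition~\ref{u-def2.5} forces $f|_{Y'}\colon Y'\to W$ to have connected fibres, so $W$ connected gives $Y'$ connected, and then $\widetilde W=\widetilde f(Y')$ is connected as a continuous image. That is the whole argument; the ``delicate bookkeeping'' you isolate simply never arises.

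Your detour has two genuine problems. First, the ``connectedness lemma for quasi-log canonical pairs'' you invoke for $h$ is not the standard statement: the usual hypotheses require $-(\widetilde\omega+\widetilde L)$ to be $h$-nef and $h$-\emph{log big}, whereas here it is only $h$-trivial. One can still prove that $\Nqklt(\widetilde X,\widetilde\omega+\widetilde L)\cap h^{-1}(w)$ is connected, but the proof goes through the common model $Y$ and the adjunction structure on $\Nqklt$---which is already the paper's method, applied to $\Nqklt$ rather than to $W$. Second, the identification $\widetilde W\cap h^{-1}(w)=\Nqklt(\widetilde X,\widetilde\omega+\widetilde L)\cap h^{-1}(w)$ that you need is not true in general: nothing in the hypothesis forces $W$ to be closed under passing to smaller qlc centers, so a center $\widetilde C$ with $h(\widetilde C)\not\subseteq W$ can meet $h^{-1}(w)$ at points lying on \emph{no} center mapping into $W$. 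Your sketch (intersecting $h(\widetilde C)$ with centers in $W$ and lifting) produces new centers over $W$, but does not show they pass through the specific point of $\widetilde C\cap h^{-1}(w)$ in question. So the gap you flag is real, and the clean way around it is precisely to lift to $Y'$ as the paper does.
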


\begin{proof}
By the construction of the induced qlc pair $[\widetilde{X}, \widetilde{\omega}]$
as in Lemma \ref{term-lem}, $[\widetilde{X}, \widetilde{\omega}]$ and $[X, \omega]$
are dominated by the same globally embedded simple normal crossing pair $(Y, B_Y)$.
Therefore, the first part is a direct implication of Definition \ref{u-def2.6}.
By adjunction theorem, $W$ and $\widetilde{W}$ are also dominated by the same union of 
strata $Y'$ of $(Y, B_Y)$. Since $W$ is connected, 
$Y'$ is also connected and thus so is $\widetilde{W}$.
\end{proof}

\section{Deficit at a point}\label{sec3}

In this section, we collect some definitions and 
explanations to reach to a definition of {\em{deficit}} of a klt pair at a closed point,
which is a number to measure how far this pair away from having
this point as its minimal lc center.
Most of them are due to Ein's \cite{ein}, Ein-Lazarsfeld's \cite{el},
Lee's \cite{lee} and Helmke's \cite{h1}, \cite{h2}, and the references therein.

\begin{defn}\label{defn-ord}
Let $X$ be a variety with $\dim X=n$ and $x$ be a closed point on $X$.
Let $\varphi: X' \to X$ be the blowing up of point $x$ 
(blowing up with respect to $m_x$ where $m_x$ is the maximal ideal sheaf of point $x$) 
and $E=\varphi^{-1}(x)$ be the Cartier exceptional divisor.
Note that $E$ is not necessarily reduced or irreducible.
Let $G$ be an effective $\mathbb R$-Cartier divisor on $X$. We define 
{\em{the order of $G$ at point $x$}},
 {\em{$\ord_x G$}} for short, to be the coefficient of $E$ in $\varphi^*G$.
\end{defn}

%%We claim that $\ord_x(D) \leq \ord_x(D)\cdot \mult_x X= \mult_x D$.

\begin{rem}\label{rem-ord}
When $G$ is Cartier, we can define 
$$
\ord_x G=\ord_x(f)=\max\{n\in\mathbb N;f\in m_x^n\}
$$
where $f$ is a local equation of $G$ and $m_x$ is the maximal ideal sheaf of point $x$.
We can also generalize it to effective $\mathbb R$-Cartier divisors by $\mathbb R$-linear combination. 
Note that these two definitions of $\ord_x G$ are the same.
Note also that if $x$ is smooth and $G$ is prime, then $\ord_x G=\mult_x G$
where $\mult_x G$ is denoted as the multiplicity of a variety at $x$.
In general, $\ord_x G$ and $\mult_x G$ are not the same, which will make some confusions.
Therefore, we will use $\ord_x G$ (order) for divisors and $\mult_x G$ (multiplicity) for varieties. 
\end{rem}

Let $(X,\Delta)$ be a log pair which is klt at around a closed point $x$.
Let $G$ be an effective $\mathbb R$-Cartier divisor.
Let $f:Y\to X$ be a log resolution of $(X,\Delta+G)$
that factors through the blowing up $\varphi$ by 
a morphism $g:Y\to X'$ such that $f=\varphi\circ g$. 
Let $K_Y+B_Y=f^*(K_X+\Delta)$.

\begin{defn}[Deficit]\label{defn-def}
Let $B_Y=\sum a_iF_i$, $f^*G=\sum b_iF_i$ and
$g^*E=\sum e_iF_i$ where $F_i$ are simple normal crossing divisors. 
Assume that $(X,\Delta+G)$ is log canonical at around $x$. Then:
\begin{itemize}
\item[$(1)$]
If $x \notin\Nklt(X,\Delta+G)$, then the {\em{deficit of $G$}} at $x$, is defined as 
$$
d_x(G)=\inf_{f(F_i)=x}\{\frac{1-a_i-b_i}{e_i}\},
$$
where $f$ varies among all those log resolutions
factoring through $\varphi$.
\item[$(2)$] If $x \in\Nklt(X,\Delta+G)$ but $x \notin\Nklt(X,\Delta+(1-t)G)$ for any $0<t<1$,
then the {\em{deficit of $G$}} at $x$, is  defined as 
$$
d_x(G)=\lim_{t\to 0^{+}} d_x((1-t)G).
$$
\end{itemize}
When $G=0$, we simply denote $d_x(G)$ as $d_x$ for short.
\end{defn}

\begin{rem}\label{rem-def-1}
In Definition \ref{defn-def} $(2)$, the minimal lc center
$W$ of  $(X,\Delta+G)$ passing through $x$ is always called the {\em{critical variety}} of $G$.
We also call that  $G$ is {\em{critical}} at $x$ if not necessary to mention $W$.
See \cite[Definition 2.4]{ein} or \cite[Definition 2.5]{lee}.
But we don't need these definitions in our paper since we don't need the tie-breaking trick 
(cf. \cite[Remark 2.5]{ein} or \cite[Remark 2.6]{lee}).
\end{rem}

\begin{rem}\label{rem-def-2}
It is equivalent to define  {\em{deficit of $G$}} as the smallest $c\in \mathbb R_{\geq 0}$ such that 
for any effective $\mathbb R$-Cartier divisor $D$ with $\ord_x D\geq c$, 
we will have $x\in\Nklt (X,\Delta+G+D)$. This is the approach used in \cite[Definition 2.9]{lee} 
when $x$ is a singular point on $X$ based on \cite[Section 4]{ein}.
When $x$ is smooth, our two definitions of deficit coincide with 
the so-called {\em{local discrepancy}} defined by Helmke \cite{h1} \cite{h2}.
\end{rem}

\begin{rem}\label{rem-def-3}
We could also define {\em{deficit}} of a qlc pair $[X,\omega]$ at a closed point $x$
where $x \notin \Nqklt(X,\omega)$.
But thanks to Theorem \ref{u-thm}, we could ``almost'' turn a qlc pair into a klt pair
by a small perturbation. See the Appendix in this paper.
\end{rem}

\begin{defn}[nice lifting]\label{defn-lift}
Notations are as in Definition \ref{defn-def} $(2)$.
Let $W$ be the minimal lc center of  $(X,\Delta+G)$ passing through $x$.
Let $D$ be an effective $\mathbb R$-Cartier divisor on $W$. 
An effective $\mathbb R$-Cartier
divisor $B$ on X is said to be a {\em{nice lifting}} of $D$, if $B$ satisfies the following two properties:
	
\begin{itemize}
\item[(1)]~$B|_W=D$;
\item[(2)]~$\Nklt(X-W, (\Delta+G+B)|_{X-W})=\Nklt(X-W,  (\Delta+G)|_{X-W})$.
\end{itemize}
\end{defn}

\begin{rem}\label{rem-lift}
Roughly speaking, a lifting $B$ of $D$ is {\em{nice}}, means that $B$ keeps the required singularities 
as $D$ on $W$ but as {\em{smooth}} as possible outside of $W$. Up to quasi-log canonical
singularities, a nice lifting of $D$ 
will always exist and if there are two nice liftings of $D$, they will contribute nothing difference
in our settings. Therefore, we can choose any nice lifting as we want. See \cite[Definition 4.5]{ein},
\cite[Proposition 2.7]{lee}, \cite[Claim 6.8.4]{ko}, \cite[Lemma 2.9]{fujino-as} 
and \cite[Proposition 3.4]{liu} up to quasi-log 
canonical singularities for more details of nice lifting.
\end{rem}

By above definitions, we immediately have the following propositions:

\begin{prop}[]\label{prop-def}
Let $(X,\Delta)$ be a klt pair. Let $G$ be an effective $\mathbb R$-Cartier divisor
and $x$ be a closed point on $X$.
Let $W$ be the minimal lc stratum of  $(X,\Delta+G)$ passing through $x$ 
with $\dim W=k$. Let $D$ be an effective $\mathbb R$-divisor on $W$.
Then:
\begin{itemize}
\item[$(1)$]~$d_x(G)\leq k$. If $W$ is singular at $x$, then ~$d_x(G)\leq k-1$;
\item[$(2)$]~we can choose a nice lifting $B$ such that $\ord_x B\geq \ord_x D$;
\item[$(3)$]~$d_x((1-t)G+B)\leq d_x((1-t)G)-\ord_x B$ for sufficiently small $t$.
\end{itemize}
\end{prop}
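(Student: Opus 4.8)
My plan uses two descriptions of the deficit. The resolution formula of Definition \ref{defn-def}, with $B_Y=\sum a_iF_i$, $f^*G=\sum b_iF_i$ and $g^*E=\sum e_iF_i$, makes (3) transparent; the characterization of Remark \ref{rem-def-2}, that $d_x(H)$ is the least $c\in\mathbb R_{\geq 0}$ forcing $x\in\Nklt(X,\Delta+H+D')$ whenever $\ord_x D'\geq c$, drives (1). Two elementary order comparisons recur throughout: for a closed subvariety $W\ni x$ one has $\ord_x(D'|_W)\geq\ord_x D'$, since $m_{x,X}\mathcal O_{W,x}=m_{x,W}$ sends a local equation in $m_{x,X}^{c}$ into $m_{x,W}^{c}$; and $\ord_x$ is superadditive, $\ord_x(B+D')\geq\ord_x B+\ord_x D'$.

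For (1) I reduce $d_x(G)$ to a deficit on the minimal lc center $W$. Subadjunction for a minimal lc center yields an effective $\Delta_W$ with $K_W+\Delta_W=(K_X+\Delta+G)|_W$ such that $(W,\Delta_W)$ is klt near $x$; write $c_W$ for the deficit of the zero divisor for $(W,\Delta_W)$ at $x$. If $\ord_x D'\geq c_W$ then $\ord_x(D'|_W)\geq c_W$, so $x\in\Nklt(W,\Delta_W+D'|_W)$, and by inversion of adjunction $x\in\Nklt(X,\Delta+G+D')$; hence $d_x(G)\leq c_W$ (interpreting $d_x(G)$ via Remark \ref{rem-def-2}, equivalently via the limit of Definition \ref{defn-def}(2) when $x\in\Nklt(X,\Delta+G)$). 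It remains to bound $c_W$ by the blow-up exceptional $E_W$ of $W$ at $x$, whose log discrepancy $\ell$ for $(W,\Delta_W)$ bounds $c_W$, because any $D''$ on $W$ with $\ord_x D''\geq\ell$ makes $E_W$ a non-klt place. When $x$ is smooth on $W$ one has $\ell\leq\dim W=k$; when $x$ is singular on $W$, the multiplicity--discrepancy estimate $a(E_W,W,0)\leq\dim W-\mult_x W$ together with $\mult_x W\geq 2$ and $\Delta_W\geq 0$ gives $\ell\leq k-1$. This yields $d_x(G)\leq k$, respectively $\leq k-1$. The step I expect to be the main obstacle is precisely this singular estimate: bounding the blow-up log discrepancy at a singular point of the possibly non-Gorenstein klt variety $W$ purely by $\mult_x W$, and in addition absorbing the nef moduli part of the subadjunction into a small ample perturbation near $x$ so that the reduction above is legitimate.

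Statement (2) is almost formal once one observes that $\ord_x D\geq\ord_x B$ holds for \emph{every} lifting $B$ of $D$, again because restriction carries $m_{x,X}^{c}$ into $m_{x,W}^{c}$; thus $\ord_x B\geq\ord_x D$ is equivalent to equality, i.e.\ to lifting $D$ with no drop of order at $x$. Setting $c=\ord_x D$ and choosing a local equation of $D$ in $m_{x,W}^{c}$, the surjection $m_{x,X}^{c}\twoheadrightarrow m_{x,W}^{c}$ provides a local lift inside $m_{x,X}^{c}$, so the divisor it defines has order $\geq c$ at $x$. Feeding this local choice into the standard construction of a nice lifting recalled in Remark \ref{rem-lift}---which modifies $B$ only away from $x$ in order to secure condition (2) of Definition \ref{defn-lift}---leaves the order at $x$ untouched, giving $\ord_x B=\ord_x D$.

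For (3) I argue on a common log resolution dominating those for $(X,\Delta+(1-t)G)$ and $(X,\Delta+(1-t)G+B)$; for $0<t\ll 1$ the pair $(X,\Delta+(1-t)G)$ is log canonical at $x$, so both deficits are the infima of Definition \ref{defn-def}(1). Writing $f^*B=\sum\beta_iF_i$ and using $\varphi^*B=(\ord_x B)\,E+R$ with $R\geq 0$, pullback by $g$ gives $f^*B=(\ord_x B)\,g^*E+g^*R$, whence $\beta_i\geq(\ord_x B)\,e_i$ for every $i$ with $f(F_i)=x$. Since the coefficient of $F_i$ in $f^*((1-t)G+B)$ is $(1-t)b_i+\beta_i$, I obtain termwise
$$
\frac{1-a_i-(1-t)b_i-\beta_i}{e_i}\leq\frac{1-a_i-(1-t)b_i}{e_i}-\ord_x B,
$$
and taking the infimum over all $F_i$ lying over $x$ gives $d_x((1-t)G+B)\leq d_x((1-t)G)-\ord_x B$. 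The only point to watch here is that the two infima are taken on the same resolution, which is arranged because $B$ is fixed and $f$ may be refined to resolve $(X,\Delta+(1-t)G+B)$ simultaneously.
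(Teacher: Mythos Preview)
The paper does not prove this proposition; it simply cites \cite[Proposition 4.2 and Lemma 4.6]{ein}, \cite[Proposition 2.9]{lee} and \cite[Equation (2.6)]{h2}. Your arguments for (2) and (3) are sound and essentially the standard ones: (3) is exactly the termwise estimate behind Helmke's inequality, and your order comparison $\ord_x(B|_W)\geq\ord_x B$ together with the surjectivity $m_{x,X}^{c}\twoheadrightarrow m_{x,W}^{c}$ is the right mechanism for (2).

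Your argument for (1), however, has a genuine gap that is more serious than the two issues you flag. You invoke the characterization of Remark \ref{rem-def-2} to conclude $d_x(G)\leq c_W$ from ``$\ord_x D'\geq c_W\Rightarrow x\in\Nklt(X,\Delta+G+D')$''. But in the situation of the proposition we are in case (2) of Definition \ref{defn-def}: $x$ already lies in $\Nklt(X,\Delta+G)$ because $W$ is an lc center through $x$, so $x\in\Nklt(X,\Delta+G+D')$ holds for \emph{every} effective $D'$, and the ``smallest $c$'' description collapses to $0$. The quantity $d_x(G)$ you must bound is the \emph{limit} $\lim_{t\to 0^+}d_x((1-t)G)$, equivalently the infimum of $(1-a_i-b_i)/e_i$ over divisors with center exactly $\{x\}$, and your implication says nothing about this. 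To run your reduction you would need subadjunction for $(X,\Delta+(1-t)G)$ with $t>0$, but then $W$ is no longer an lc center and Kawamata subadjunction does not apply; the step you label ``inversion of adjunction'' (which is actually the adjunction direction) therefore does not transport the bound. This is not repaired by absorbing the moduli part.

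The approach in the cited references is more direct and avoids subadjunction entirely: starting from an lc place $F$ with center $W$ and log discrepancy $0$, one produces by explicit blow-ups on a log resolution (e.g.\ blowing up along suitable smooth centers contained in $F$ over $x$) a divisor $F'$ with center $\{x\}$ whose ratio $(1-a_{F'}-b_{F'})/e_{F'}$ is at most $\dim W$, with the refinement to $k-1$ when $x\in W$ is singular coming from a multiplicity count. That construction lives on $Y$ rather than on $W$ and gives (1) directly from Definition \ref{defn-def}. Your acknowledged concern about the bound $a(E_W,W,0)\leq\dim W-\mult_x W$ is also real---$E_W$ need not be prime and $W'$ need not be normal---but it becomes moot once you switch to the direct construction.
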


\begin{proof}
See \cite[Proposition 4.2 and Lemma 4.6]{ein} or \cite[Proposition 2.9]{lee}. 
See also \cite[Equation (2.6)]{h2}.
\end{proof}

\section{Global index one cover}\label{sec4}
Recall that an {\em{index one cover}} (cf. \cite[(6.8)]{ckm} or \cite[Definition 5.19]{km}) 
is constructed locally on an affine variety.
In this section, we construct index one cover globally. 
Let $X$ be a projective $n$-dimensional normal $\mathbb Q$-Gorenstein variety.
We assume that $x:=\Sing X$ is a unique isolated point.
Let $r=\Index_xX$. 
That is, $r$ is the smallest positive integer such that $rK_X$ is Cartier. 
Take a sufficiently ample line bundle $\mathcal A$ on $X$ such that 
$\mathcal A^r \otimes \mathcal O_X(rK_X)$ is generated by global sections. 
Let $D=(s=0)$ be a general member of $|\mathcal A^r \otimes \mathcal O_X(rK_X)|$
which is smooth and does not pass through point $x$.
By general ramified cyclic cover in
\cite[Definition 2.52]{km},
there is a cyclic cover 
$$
p: X'=\Spec_X\oplus_{i=0}^{r-1} (\mathcal A\otimes \mathcal O_X(K_X))^{\lbrack -i\rbrack}\to X
$$ ramified along $x\cup D$ with $\deg p =r$.
In particular, let $U$ be an open neighborhood of point $x$  and $U'=p^{-1}(U)$. Then
$$
p|_{U'} : U'=\Spec_U\oplus_{i=0}^{r-1} (\mathcal A\otimes \mathcal O_X(K_X)|_U)^{\lbrack -i\rbrack}\to U
$$
is the restriction of $p$ and the multiplication is given by 
$$
s|_U: \mathcal O_U \to  (\mathcal A\otimes \mathcal O_X(K_X)|_U)^{\lbrack r\rbrack} \simeq \mathcal O_U
$$
by shrinking $U$ suitably. Note that this is exactly the definition of local index one cover defined by $s|_U$.
%% Let $\mathcal A\otimes \mathcal O_X(K_X)$ be the $L$ in \cite[Definition 2.52]{km}.$$

\begin{defn}[global index one cover]\label{defn-global}
Such a cyclic cover $p: X'\to X$  is called {\em{global index one cover}} at point $x$. 
Note that $p$ is heavily depended on $D$.
\end{defn}

\begin{prop}\label{index-prop}
Let $p: X'\to X$ be a global index one cover at point $x$ with $r=\Index_xX$. Then:
\begin{itemize}
\item[(1)] $X'$ is normal and Gorenstein, i.e., $K_{X'}$ is Cartier, 
\item[(2)] $x':=p^{-1}(x)_{\red}$ is the unique possible singular point,  
\item[(3)] $p$ is \'etale in codimenison one over $X \backslash D$,
\item[(4)] the extension of the function fields $\mathbb C(X')/ \mathbb C(X)$ 
is Galois and the Galois group $G \cong \mathbb Z/(r)$ acts on $X'$ over $X$, and
\item[(5)] $g \cdot m_{x'} \simeq m_{x'}$ where $g\in G$ is an action and $m_{x'}$ is the 
ideal sheaf of point $x'$.
\end{itemize}
\end{prop}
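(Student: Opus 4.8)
The plan is to study $p$ over two overlapping charts and then glue the conclusions. Since $D$ avoids $x$, I would fix an affine neighborhood $U$ of $x$ with $U\cap D=\emptyset$ and set $V=X\setminus\{x\}$, so that $U\cup V=X$. Over $U$ the section $s|_U$ is a unit and, after trivializing the line bundle $\mathcal A|_U$, the identification $L:=\mathcal A\otimes\mathcal O_X(K_X)$, $L|_U\cong \mathcal O_X(K_X)|_U$ shows that $p|_{U'}$ with $U'=p^{-1}(U)$ is exactly the local index one cover of $X$ at $x$, as already observed in the construction above. Over $V$ the reflexive sheaf $L$ is a genuine line bundle, so $p|_{V'}$ with $V'=p^{-1}(V)$ is the ordinary degree-$r$ cyclic cover of the smooth variety $V$ branched along the smooth divisor $D$.

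For (1), (2) and (3) I would feed each chart to the relevant classical statement. Over $V$ the cyclic cover of a smooth variety branched along a smooth divisor is smooth (locally $t^r=f$ with $df\neq 0$ on $(f=0)$), so $X'$ is regular away from $p^{-1}(x)$; in particular $X'$ is normal over $V$ and its only possible singular point lies over $x$. Over $U$ the local index one cover is normal, its fibre over $x$ is the single reduced point $x'$ which is totally ramified, and $K$ is Cartier there (see \cite[Definition 5.19, Proposition 5.20]{km}). Gluing gives (1): normality holds over $U$ and over $V$, hence on all of $X'$, and $K_{X'}$ is Cartier on the smooth locus $V'$ and Cartier over $U$ by the index-one-cover property, so—Cartierness being local—$K_{X'}$ is Cartier everywhere. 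Combining the two charts, $x'=p^{-1}(x)_{\red}$ is the unique point that can be singular, which is (2). For (3), over $V$ the map is $t^r=u$ with $u$ a unit off $D$, hence étale in characteristic zero, so the only non-étale locus over $X\setminus D$ is the single point $x$ of codimension $\geq 2$; thus $p$ is étale in codimension one over $X\setminus D$.

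For (4), the algebra $\bigoplus_{i=0}^{r-1}L^{[-i]}$ has rank $r$, so $\deg p=r$, and scaling the $i$-th summand by $\zeta^i$ for a primitive $r$-th root of unity $\zeta$ produces a group $G\cong\mathbb Z/(r)$ of $X$-automorphisms of $X'$. To see that $G$ is the full Galois group I would argue that $X'$ is irreducible: it is normal by (1); every irreducible component is finite over $X$ and, since $p_*\mathcal O_{X'}=\bigoplus_{i=0}^{r-1}L^{[-i]}$ is torsion-free over $\mathcal O_X$, dominates the irreducible $X$ and therefore meets $p^{-1}(x)_{\red}=\{x'\}$; as $\mathcal O_{X',x'}$ is a normal domain, only one component can pass through $x'$, so $X'$ is irreducible and $\mathbb C(X')/\mathbb C(X)$ is cyclic Galois with group $G\cong\mathbb Z/(r)$. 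Then (5) is immediate: $G$ permutes the one-point fibre $p^{-1}(x)$, hence fixes $x'$, so each $g\in G$ induces an automorphism of $\mathcal O_{X',x'}$ preserving its maximal ideal, giving $g\cdot m_{x'}\simeq m_{x'}$.

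The step I expect to require the most care is the compatibility of the two local models at $x'$, namely confirming that the globally defined cover genuinely restricts to the local index one cover near $x$ in spite of the extra twist by $\mathcal A$ and the branching along $D$ elsewhere, since the Gorenstein half of (1) rests entirely on importing the local index-one-cover theorem through that identification. Once the two charts are correctly set up, everything else reduces to standard facts about cyclic covers.
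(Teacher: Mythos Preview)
Your argument is correct and follows essentially the same two-chart strategy as the paper: decompose $p$ into the local index one cover on a neighborhood of $x$ and the standard cyclic cover of a smooth variety along a smooth divisor away from $x$, then cite the classical facts on each piece. The paper compresses (1)--(4) into a reference to \cite[4-5-1]{matsuki}, whereas you unpack the details (including a careful irreducibility argument for (4) via the single-point fibre over $x$), but the underlying logic is the same.

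The one genuine difference is in (5). The paper writes out the explicit local presentation
\[
m_{x'}=m_x \oplus \bigl\{ \bigoplus^{r-1}_{i=1} R_i\cdot (\sqrt[r]{f})^i\bigr\}
\]
and checks by hand that scaling $\sqrt[r]{f}$ by $\zeta$ sends this ideal to itself. Your argument is more conceptual: since $G$ acts on $X'$ over $X$ and the fibre $p^{-1}(x)_{\red}$ is the single point $x'$, every $g\in G$ fixes $x'$ and hence the induced local automorphism preserves the maximal ideal. Your route is shorter and avoids any explicit coordinates; the paper's computation has the minor advantage of making the $G$-eigenspace decomposition of $m_{x'}$ visible, which is implicitly used later when one needs $\ord_{x'}$ to be $G$-invariant.
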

\begin{proof}
Note that outside the singular point $x$, $p$ coincides with the ramified cyclic cover for
line bundle case as in \cite[Definition 2.50]{km}.
Since $D$ is smooth, $X'\backslash p^{-1}(x)$ is smooth by \cite[Lemma 2.51]{km}.
Note also that outside the ramified locus $D$, $p$ coincides with the local index one cover determined by
the nowhere vanishing local section of $D$. Then it is easy to see that
$(1)$--$(4)$ are direct conclusions of \cite[4-5-1]{matsuki}.
For $(5)$, we write down the local expression of $m_{x'}$ as in \cite[4-5-1]{matsuki}:
$$
m_{x'}=m_x \oplus \{ \oplus^{r-1}_{i=1} R_i\cdot (\sqrt[r]{f})^i\}.
$$
Assume the action of $g$ is presented by a fixed primitive $r$-th root of unity $\zeta$, then
$$
g \cdot  m_{x'}=\zeta \cdot (m_x \oplus \{ \oplus^{r-1}_{i=1} R_i\cdot (\sqrt[r]{f})^i\})
=m_x  \oplus \{ \oplus^{r-1}_{i=1} R_i\cdot (\zeta\cdot \sqrt[r]{f})^i\}).
$$
It is easy to see that $g \cdot m_{x'} \subset m_{x'}$. Using a converse action $g^{-1}$,
we get what we want.
\end{proof}

Let $(X, \Delta)$ be a projective klt pair such that 
$X$ is an $n$-dimensional normal $\mathbb Q$-Gorenstein variety. 
Assume that $x:=\Sing X$ is a closed point.
Let $r=\Index_xX$. 
Let $N$ be an ample $\mathbb R$-Cartier divisor on $X$ such that $N^n>\frac{n^n}{r}$.
Let $p: X'\to X$ be the global index one cover at point $x$ determined by $D$ and $X_0=X\backslash D$.
Let $X'_0=p^{-1}(X_0)$. Let
$\Delta'$ be the $\mathbb R$-divisor such that $K_{X'}+\Delta'=p^*(K_X+\Delta)$
and $\Delta'_0=\Delta'|_{X'_0}$.
Then $(X'_0,\Delta'_0)$ is also klt by Proposition \ref{index-prop} $(3)$ and
\cite[Proposition 5.20]{km}. Note that $(X', \Delta')$ is sub klt 
since $\Delta'$ may contain negative component supported on $p^{-1}(D)$.
Let $N'=p^*N$ be the ample $\mathbb R$-Cartier divisor on $X'$. Then 
$(N')^n=r\cdot N^n> n^n$.
 By \cite[Theorem 6.7.1]{ko} 
(where we only need to assume that $(X', \Delta')$
is klt in a neighborhood of $x'$, see also \cite[Proposition 3.3]{liu}), 
there is an effective $\mathbb R$-Cartier divisor 
$L'\sim_{\mathbb R} N'$ on $X'$ such that 
$(X', \Delta'+L')$ is not log canonical at $x'$.
Let $d_{x'}$ be the deficit of $0$ with respect to $(X', \Delta')$.

\begin{lem}\label{key-lem}
Notations are as above. If we further assume that $\ord_{x'} L'>  d_{x'}$, then
there is an effective $\mathbb R$-Cartier divisor $L \sim_{\mathbb R} N$, 
a positive number $0<c< 1$, and an 
open neighborhood $x\in U \subset X$ such that: 
\begin{itemize}
\item[$(1)$] $(U,(\Delta+cL)\vert_{U})$ is log canonical, and
\item[$(2)$] there is a minimal lc center $W$ of 
$(U,(\Delta+cL)\vert_{U})$ passing through $x$ with 
$\dim W<\dim X$.
\end{itemize}
\end{lem}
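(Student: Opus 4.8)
The plan is to exploit the Galois action to produce on $X'$ a $G$-invariant divisor whose singularity at $x'$ is controlled, and then push it down to $X$. Starting from the divisor $L'\sim_{\mathbb R}N'$ with $\ord_{x'}L'>d_{x'}$ and $(X',\Delta'+L')$ not log canonical at $x'$, I would first symmetrize. Since $N'=p^*N$ is $G$-invariant and $N^n>n^n/r$ gives $(N')^n>n^n$, I would replace $L'$ by the averaged divisor
$$
\overline{L'}=\frac{1}{r}\sum_{g\in G}g^*L'.
$$
This is $G$-invariant, still $\mathbb R$-linearly equivalent to $N'$ (because each $g^*L'\sim_{\mathbb R}g^*N'=N'$), and by Proposition \ref{index-prop} $(5)$ each $g^*L'$ has the same order at $x'$, so $\ord_{x'}\overline{L'}=\ord_{x'}L'>d_{x'}$. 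Since the deficit is precisely the threshold from Remark \ref{rem-def-2}, the condition $\ord_{x'}\overline{L'}>d_{x'}$ guarantees $x'\in\Nklt(X',\Delta'+\overline{L'})$ in the neighborhood $X'_0$ where $(X'_0,\Delta'_0)$ is klt.

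\textbf{Descending to $X$ via invariance.}
Because $\overline{L'}$ is $G$-invariant, it descends to an effective $\mathbb R$-Cartier divisor $L$ on $X$ with $p^*L=\overline{L'}$ and $L\sim_{\mathbb R}N$; here one uses that $\mathbb C(X')/\mathbb C(X)$ is Galois with group $G$ (Proposition \ref{index-prop} $(4)$), so $G$-invariant $\mathbb R$-divisors are pullbacks from $X$, and $p$ being finite the $\mathbb R$-linear equivalence descends after dividing by $\deg p=r$. I would then transfer the non-log-canonicity: since $K_{X'}+\Delta'=p^*(K_X+\Delta)$ and $p^*L=\overline{L'}$, we have $K_{X'}+\Delta'+\overline{L'}=p^*(K_X+\Delta+L)$, and as $p$ is finite and étale in codimension one over $X\setminus D$ (Proposition \ref{index-prop} $(3)$), discrepancies are preserved under $p$ away from $D$. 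Because $D$ avoids $x$, the pair $(X,\Delta+L)$ is therefore not log canonical at $x$ as well.

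\textbf{Extracting the coefficient $c$ and the center $W$.}
With $(X,\Delta+L)$ not log canonical at $x$ but $(X,\Delta)$ klt near $x$, I would define
$$
c=\sup\{t\in(0,1]\mid (X,\Delta+tL)\ \text{is log canonical near}\ x\}.
$$
Lower semicontinuity of log canonicity shows $c<1$ (non-lc fails at $t=1$) and $c>0$ (klt is an open condition, so $(X,\Delta+tL)$ stays lc for small $t$). At $t=c$ the pair $(X,\Delta+cL)$ is log canonical but not klt at $x$, so a log canonical center passes through $x$; taking a minimal one gives $W$ with $x\in W$. Since $(X,\Delta)$ is klt, $X$ has no lc center at $x$ coming from $\Delta$ alone, so $W$ is a genuine center forced by $L$ and hence $\dim W<\dim X$. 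Shrinking to an open neighborhood $U$ of $x$ where these properties hold yields both conclusions.

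\textbf{Main obstacle.}
The delicate point is ensuring that the symmetrized $\overline{L'}$ still forces non-log-canonicity at $x'$ with the sharp order estimate surviving averaging: one must check that taking the $G$-average does not drop the order at $x'$ below $d_{x'}$, which is exactly where Proposition \ref{index-prop} $(5)$ on the $G$-invariance of $m_{x'}$ is essential. The secondary subtlety is the clean descent of $\mathbb R$-linear equivalence and of the non-lc locus across the ramification divisor $D$; this is handled by arranging $D$ to miss $x$ and working in the étale-in-codimension-one locus, so that the whole argument takes place where $p$ behaves like a genuine quotient.
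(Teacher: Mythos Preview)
Your argument is correct and is essentially the paper's own proof: symmetrize $L'$ under the Galois action, use Proposition~\ref{index-prop}~(5) to keep the order at $x'$ unchanged, descend the $G$-invariant divisor to $L\sim_{\mathbb R}N$ on $X$, and extract the threshold $c<1$ from the strict inequality $\ord_{x'}>d_{x'}$. The only cosmetic differences are that the paper takes the \emph{sum} $\sum_g L'_g$ (descending to $\widetilde L\sim_{\mathbb R}rN$) and computes the threshold $c'\le d_{x'}/\ord_{x'}\!\big(\sum_g L'_g\big)<1/r$ upstairs on $X'_0$ via \cite[Proposition~5.20]{km} before setting $L=\tfrac{1}{r}\widetilde L$ and $c=rc'$, whereas you average first and take the threshold directly on $X$; just be careful that your passage from ``$x'\in\Nklt$'' to ``not log canonical at $x$'' really uses the \emph{strict} inequality to get $c<1$ rather than merely $c\le 1$.
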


\begin{proof}
Let $L'_{g}$ be the Galois conjugates of $L'$ for $g\in G$.
Then 
 $\Sigma_g L'_{g}$ is $G$-invariant and thus,
there is an
effective $\mathbb R$-Cartier divisor 
$\widetilde {L}\sim_{\mathbb R} rN$ on $X$ such that $\Sigma_g L'_{g}=p^*\widetilde {L}$.
Note that
$$
\ord_{x'}\Sigma_g L'_{g}=r\cdot \ord_{x'} L'> r\cdot d_{x'}
$$
by Proposition \ref{index-prop} $(5)$ and assumption.
Therefore, there is a maximal number 
$$
c'\leq \frac{d_{x'}}{\ord_{x'}\Sigma_g L'_{g}} <\frac{1}{r}
$$ 
such that $(X'_0,\Delta'_0+c'\Sigma_g L'_{g})$ is log canonical at around point $x'$ 
by Remark \ref{rem-def-2}.
By shrinking $X_0$ and \cite[Proposition 5.20]{km}, 
$(U,(\Delta+c'\widetilde {L})\vert_{U})$ is log canonical in a suitable open neighborhood $x\in U$.
That is, there is an effective $\mathbb R$-Cartier divisor
$L=\frac{1}{r}\widetilde {L}\sim_{\mathbb R} N$
and a number $c=c'\cdot r< 1$ such that
$(U, (\Delta+cL)|_{U})$ is log canonical. This is $(1)$.
By our construction, $(2)$ is trivial.
\end{proof}

Without assuming that $\ord_{x'} L'> d_{x'}$,  
 it may happen that 
$x'\notin \Nklt (X', \Delta'+1/r(\Sigma_g L'_{g}))$ by replacing $L'$ with $1/r(\Sigma_g L'_{g})$,
and thus the inductive procedure stops.
Note also that $\ord_x L$ may be smaller than $d_x$  on $X$. 
It will be interesting to ask the relationship between $d_x$ and $d_{x'}$.

\section{Freeness for terminal singularities}\label{sec5}

It is known that
Fujita-type basepoint-freeness in dimension three has been proved up to  
Gorenstein terminal singularities by Lee in \cite{lee} and Kakimi in \cite{kkm} separately. 
When the threefold $X$ is not Gorenstein, 
the result is still not known. This is because that, for a given terminal point $x$,
$\mult_x X$ can go to infinity when the index $r$ is increasing
(cf. \cite[Theorem 2.1]{kkm2}). This makes the low bound of
$\ord_x L$ where $L\sim_{\mathbb R}N$ constructed by Riemann-Roch theorem
(as in the proof of \cite[Proposition 3.2]{h1}) so small that the inductive procedure stops.
In this section, we will overcome this problem by using global index one cover.
Note that  $\Sing X$ 
is a union of isolated points since $X$ has only terminal singularities.
For simplicity, we assume that $x:=\Sing X$ is a unique point.

\begin{thm}\label{thm-term}
Let $[X, \omega]$ be a projective quasi-log canonical pair such that 
$X$ is a normal $\mathbb Q$-factorial terminal threefold. 
Assume that $x:=\Sing X$ is a point and  $x\notin \Nqklt (X,\omega)$.
Let $r=\Index_xX$.
Let $M$ be a Cartier divisor on $X$. We put $N=M-\omega$ 
and assume that
$N^3 >\frac{27}{r}$ and that $N^k\cdot Z\geq 3^k$ for every subvariety 
$Z$ with $0<\dim Z=k<3$.
Then the complete linear system $|M|$ is basepoint-free at point $x$.
\end{thm}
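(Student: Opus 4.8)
The plan is to reduce, by the standard small perturbation, to a genuine klt pair and then manufacture a log canonical center of controlled dimension through $x$, so that the vanishing theorem produces the desired section. First I would invoke Theorem \ref{u-thm}: since $X$ is normal projective and $x\notin\Nqklt(X,\omega)$, one obtains a boundary $\mathbb R$-divisor $\Delta$ with $K_X+\Delta\sim_{\mathbb R}\omega+\varepsilon N$ for $0<\varepsilon\ll 1$ such that $(X,\Delta)$ is klt in a neighborhood of $x$ (here $x\notin\Nqklt$ is what upgrades log canonical to klt near $x$). Writing $A:=M-(K_X+\Delta)\sim_{\mathbb R}(1-\varepsilon)N$, the strict inequality $N^3>\tfrac{27}{r}$ guarantees $A^3>\tfrac{27}{r}$ once $\varepsilon$ is small, and the inequalities $N^k\cdot Z\ge 3^k$ survive with only a controlled loss. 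The problem then becomes: produce an effective $L\sim_{\mathbb R}A$ and a rational $0<c<1$ so that $(X,\Delta+cL)$ is log canonical near $x$ with a minimal lc center $W\ni x$ of dimension $<3$.

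The heart of the matter is the construction of $L$, and here the main obstacle appears. For a non-Gorenstein terminal point $\mult_x X$ is unbounded as $r=\Index_x X$ grows, so a direct Riemann--Roch construction on $X$ only yields $L$ of small order at $x$ and the inductive procedure stalls --- this is exactly the phenomenon recorded at the start of this section. I would circumvent it by passing to the global index one cover $p\colon X'\to X$ of Section \ref{sec4}. By Proposition \ref{index-prop}, $X'$ is Gorenstein with $x'=p^{-1}(x)_{\red}$ its only possible singular point, so $x'$ is either smooth or a Gorenstein terminal (hence compound Du Val) point, whence $\mult_{x'}X'\le 2$; moreover $N':=p^*A$ is ample with $(N')^3=r\cdot A^3>27$. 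On $X'$ the bound $(N')^3>27$ together with $\mult_{x'}X'\le 2$ lets the construction of \cite[Theorem 6.7.1]{ko} (cf. \cite[Proposition 3.3]{liu}) produce $L'\sim_{\mathbb R}N'$ with $(X',\Delta'+L')$ not log canonical at $x'$ and, crucially, with $\ord_{x'}L'>d_{x'}$: indeed $d_{x'}\le 2$ when $x'$ is singular and $d_{x'}=3$ when $x'$ is smooth by Proposition \ref{prop-def} (1), while the Riemann--Roch margin driven by $(N')^3>27$ and $\mult_{x'}X'\le 2$ exceeds these in both cases.

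With the hypothesis $\ord_{x'}L'>d_{x'}$ verified, Lemma \ref{key-lem} applies verbatim: averaging over the Galois group and using $g\cdot m_{x'}\simeq m_{x'}$ (Proposition \ref{index-prop} (5)) descends $\tfrac1r\sum_g L'_g$ to an effective $L\sim_{\mathbb R}A$ together with $c=c'r<1$ such that $(U,(\Delta+cL)|_U)$ is log canonical on a neighborhood $U$ of $x$ with a minimal lc center $W\ni x$, $\dim W<3$. Finally I would run the inductive procedure on $W$. The pair $(X,\Delta+cL)$ presents $[X,\omega+\varepsilon N+cL]$ as a quasi-log canonical structure having $W$ as a qlc center, so by adjunction (Theorem \ref{thm-av} (i)) $W$ inherits a quasi-log canonical structure $[W,\omega_W]$ with $\omega_W\sim_{\mathbb R}(\omega+\varepsilon N+cL)|_W$ of dimension $\le 2$; the surviving Fujita inequalities for $A|_W$ place us in the already-settled cases $n\le 2$ of Conjecture \ref{conj-fujita}, yielding a section of $\mathcal O_W(M)$ nonvanishing at $x$. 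Since $M-(\omega+\varepsilon N+cL)\sim_{\mathbb R}(1-\varepsilon)(1-c)N$ is ample, hence nef and log big, the vanishing theorem (Theorem \ref{thm-av} (ii)) gives $H^1(X,\mathcal I_W\otimes\mathcal O_X(M))=0$, so $H^0(X,\mathcal O_X(M))\to H^0(W,\mathcal O_W(M))$ is surjective and the section lifts, showing $|M|$ is basepoint-free at $x$.

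The delicate point throughout is the bookkeeping of the perturbation $\varepsilon$ so that every Fujita inequality --- on $X$, on the cover $X'$, and on the descended center $W$ --- retains a strict margin; this is precisely what the slack in $N^3>\tfrac{27}{r}$ and $N^k\cdot Z\ge 3^k$ is spent on, and verifying that these margins survive simultaneously is the part I expect to require the most care.
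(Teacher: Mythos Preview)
Your outline is correct up through the application of Lemma~\ref{key-lem}: the perturbation to a klt pair, the passage to the global index one cover, the multiplicity bound $\mult_{x'}X'\le 2$, and the verification that $\ord_{x'}L'>d_{x'}$ all match the paper. The genuine gap is the final paragraph, where you descend once to get $(X,\Delta+cL)$ log canonical with center $W\ni x$ and then appeal to the settled $n\le 2$ case on $W$.

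The problem is that Lemma~\ref{key-lem} only gives $c<1$; it gives no uniform bound away from $1$. Concretely, when $x'$ is smooth one has $d_{x'}\le 3$ while $\ord_{x'}L'$ is only slightly larger than $3$, so $c$ can be arbitrarily close to $1$. The divisor you must test Fujita's condition against on $W$ is $M|_W-\omega_W\sim_{\mathbb R}(1-c)(1-\varepsilon)N|_W$, not $A|_W$. If $\dim W=2$ you need $((1-c)(1-\varepsilon)N)^2\cdot W>4$ and $(1-c)(1-\varepsilon)N\cdot C\ge 2$ for curves $C\subset W$; with only $N^2\cdot W\ge 9$ and $N\cdot C\ge 3$ this forces roughly $c<1/3$, which you do not have. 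So the ``surviving Fujita inequalities'' do not survive, and the induction does not close.

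The paper handles this by running the \emph{entire} inductive procedure on the cover $X'$, not just the first step. After obtaining $c_3$ and the center $Z'$ on $X'$, it splits into cases on $\dim Z'$ and, when $\dim Z'>0$, constructs further divisors $L'_2,L'_1$ on $X'$ via nice liftings from $Z'$ (resp.\ $S'$), each time checking the deficit inequality needed to Galois-average and descend. The substance of the proof is the case-by-case verification that $c_3+c_2+c_1<1$, using the Gorenstein terminal geometry of $X'$ (so $m_1\le 2$, and the surface center $Z'$ is Cartier and Gorenstein with $m_2\le 2$) together with the refined deficit estimates of Proposition~\ref{prop-def} and \cite[Lemma 3.3]{lee}. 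The shortcut to the $n\le 2$ result is used only when $c_3$ happens to be small (the paper notes one may assume $c_3\ge 2/3$ in the curve case, $c_3\ge 1/3$ in the surface case); when $c_3$ is large, the explicit numerics on the cover are unavoidable.
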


\begin{proof}
By using Theorem \ref{u-thm} (see the proof of \cite[Theorem 3.2]{fujino-haidong-freeness}), 
we can take a 
boundary $\mathbb R$-divisor $\Delta_\varepsilon$ on 
$X$ such that $K_X+\Delta_\varepsilon \sim _{\mathbb R} \omega+\varepsilon N$ for 
$0<\varepsilon \ll 1$ and $\mathcal J(X, \Delta_\varepsilon)=\mathcal I_{\Nqklt(X, \omega)}$
where $\mathcal J(X, \Delta_\varepsilon)$ is the multiplier ideal sheaf of $(X, \Delta_\varepsilon)$. 
Since $\mathcal J(X, \Delta_\varepsilon)=\mathcal I_{\Nqklt(X, \omega)}$, $(X, \Delta_\varepsilon)$ is 
klt in a neighborhood of $x$. 
Let $p:X'\to X$ be the global index one cover at point $x$ determined by $D$ and $X_0=X\backslash D$.
Let $X'_0=p^{-1}(X_0)$. Let
$\Delta'_\varepsilon$ be the $\mathbb R$-divisor such that 
$K_{X'}+\Delta'_\varepsilon=p^*(K_X+\Delta_\varepsilon)$.
Then $(X'_0, \Delta'_\varepsilon|_{X'_0})$ is also klt by Proposition \ref{index-prop} $(3)$ and
\cite[Proposition 5.20]{km}. 
Let $N_\varepsilon:=M-(K_X+\Delta_\varepsilon)\sim_{\mathbb R}(1-\varepsilon)N$, $N':=p^*N$ and 
$N'_\varepsilon :=p^*N_\varepsilon\sim_{\mathbb R}(1-\varepsilon)N'$.
Note that
$(N')^3=r\cdot N^3>27$
and 
$$
(N')^k\cdot Z'\geq N^k\cdot p(Z')\geq 3^k
$$ 
for every subvariety 
$Z'\subset X'$ with that $0<\dim Z'=k<3$. 
Let $\sigma_3=3(1+3\varepsilon)$, $\sigma_2=\sigma_1=3(1-\varepsilon)$.
Then by choosing $\varepsilon$ small enough, we have that:
\begin{itemize}
\item[$(1)$] $(N'_\varepsilon)^3>\sigma^3_3>27$,
\item[$(2)$] $(N'_\varepsilon)^2 \cdot S' \geq \sigma^2_2$ for any irreducible surface $S'$,
\item[$(3)$] $N'_\varepsilon \cdot C' \geq \sigma_1$ for any irreducible curve $C'$,
\item[$(4)$] $\sigma_3>\sigma_2$,
\item[$(5)$] $\sigma_2(1-\frac{1}{\sigma_3})> 2-3\varepsilon$,
\item[$(6)$] $\sigma_1(1-\frac{1}{\sigma_3}-\frac{1}{\sigma_2})> 1-3\varepsilon$.
\end{itemize}

Since $X'$ has at most Gorenstein terminal singularity, $m_1=\mult_{x'}X'\leq 2$
(cf. \cite{ckm}).
By Riemann-Roch theorem, 
there is an effective $\mathbb R$-Cartier divisor 
$L'_3 \sim_{\mathbb R} N'_\varepsilon$ on $X'$ such that 
$\ord_{x'} L'_3 \geq \frac{\sigma_3}{\sqrt[3]{m_1}}$. 
Let $d_{x'}$ be the deficit of pair $(X'_0, \Delta'_\varepsilon|_{X'_0})$.
By Proposition \ref{prop-def} $(1)$,
\begin{equation}\label{eq5.1}
\begin{split}
 \ord_{x'} L'_3 \geq \sigma_3 >3\geq d_{x'} \quad \text{if $x'$ is smooth, or}
\\ \ord_{x'} L'_3 \geq \frac{\sigma_3}{\sqrt[3]{2}}>2
\geq d_{x'} \quad \text{if $x'$ is singular}. 
\end{split}
\end{equation}
Therefore, there is a maximal real number $0<c_3<1$ such that
$(X'_0, (\Delta'_\varepsilon+c_3L'_3)|_{X'_0})$ is log canonical at around point $x'$.
The same as Lemma \ref{key-lem}, we can assume that $L'_3$ is $G$-invariant by \eqref{eq5.1}
and thus $L'_3=p^*L_3$ where $(X, \Delta_\varepsilon+c_3L_3)$ is log canonical at around $x$.
Let $Z'$ be the closure of the minimal lc center of $(X'_0, (\Delta'_\varepsilon+c_3L'_3)|_{X'_0})$
in $X'$ and $Z$ be the minimal lc center of  $(X, \Delta_\varepsilon+c_3L_3)$ passing through $x$.
We discuss various cases according to the dimension of $Z'$.

\begin{case}\label{case5.1}
Assume that $\dim Z'=0$, that is, $x'=Z'$.
Then  $(X, \Delta_\varepsilon+c_3L_3)$ is log canonical at around $x$ and $x=p(Z')$
is the minimal lc center of $(X, \Delta_\varepsilon+c_3L_3)$ passing through $x$.
Let $W=\Nqlc(X, K_X+\Delta_\varepsilon+c_3L_3)\cup x$.
Since $M-(K_X+\Delta_\varepsilon+c_3L_3)\sim_{\mathbb R}(1-c_3)(1-\varepsilon)N$ is ample, 
the natural restriction map 
\begin{equation*}
H^0(X, \mathscr O_X(M))\to H^0(W, \mathscr O_W(M))
\end{equation*}
is surjective by the vanishing theorem. Since $x$ is an isolated point in $W$,
it is obviously that $|M|$ is basepoint-free at point $x$.
\end{case}

\begin{case}\label{case5.2}
Assume that $\dim Z'=1$, that is, $Z'$ is an irreducible curve smooth at $x'$.
By condition $(3)$, there is an effective $\mathbb R$-Cartier divisor 
$L'_{Z'} \sim_{\mathbb R} N'_\varepsilon|_{Z'} $ on $Z'$ such that $\ord_{x'} L'_{Z'} \geq \sigma_1$.
By Proposition \ref{prop-def} $(2)$, there is an effective $\mathbb R$-Cartier divisor 
$L'_1 \sim_{\mathbb R} N'_\varepsilon$ on $X'$ such that  
$\ord_{x'} L'_1  \geq \ord_{x'} L'_{Z'} \geq\sigma_1$.
By Proposition \ref{prop-def} $(1)$, 
\begin{equation}\label{eq5.2}
\ord_{x'} L'_1  \geq \sigma_1 =3(1-\varepsilon)> 1\geq d_{x'}(c_3L'_3).
\end{equation}
Therefore, there is a maximal real number $0<c_1<1$ such that
$(X'_0, (\Delta'_\varepsilon+c_3L'_3+c_1L'_1)|_{X'_0})$ is log canonical at around point $x'$.
By \eqref{eq5.2} and Lemma \ref{key-lem}, we assume that $L'_1$ is $G$-invariant 
and thus $L'_1=p^*L_1$ where $(X, \Delta_\varepsilon+c_3L_3+c_1L_1)$ is log canonical at around $x$.
In particular, $x$ is the minimal lc center of  $(X, \Delta_\varepsilon+c_3L_3+c_1L_1)$.
We need to show that $c_3+c_1<1$. If so, then 
$$
M-(K_X+\Delta_\varepsilon+c_3L_3+c_1L_1)\sim_{\mathbb R}(1-c_1-c_3)(1-\varepsilon)N
$$ 
is ample, and the natural restriction map 
\begin{equation*}
H^0(X, \mathscr O_X(M))\to H^0(W, \mathscr O_W(M))
\end{equation*}
is surjective by the vanishing theorem where $W=\Nqlc(X, K_X+\Delta+c_3L_3+c_1L_1) \cup x$.
Since $x$ is an isolated point in $W$,
it is obviously that $|M|$ is basepoint-free at point $x$.

\medskip

Therefore, we prove that $c_3+c_1<1$ in the rest of this case.
Let $b_3=\ord_{x'} L'_3 \geq \frac{\sigma_3}{\sqrt[3]{m_1}}$ 
and $b_1=\ord_{x'} L'_1 \geq \sigma_1$.
Let $d_0=d_{x'}$ and $d_1=d_{x'}(c_3L'_3)$. 
When $x'$ is smooth, we have the following relationship by Proposition \ref{prop-def}:
\begin{itemize}
\item[$(1)$] $d_0 \leq 3$,
\item[$(2)$] $c_3b_3 \leq d_0$,
\item[$(3)$] $d_1\leq d_0-c_3b_3$,
\item[$(4)$] $c_1b_1 \leq d_1$,
\end{itemize}
and thus
$$
c_3+c_1\leq c_3+\frac{d_0-c_3b_3}{b_1}
\leq c_3 +\frac{3-c_3\sigma_3}{\sigma_1}=\frac{3}{\sigma_1}+(1-\frac{\sigma_3}{\sigma_1})c_3
= \frac{1-4\varepsilon c_3}{1-\varepsilon}.
$$
Note that we can always assume that $c_3\geq \frac{2}{3}$. Otherwise,
we can easily check that on $(X, \Delta_\varepsilon+c_3L_3)$,
$M|_Z-(K_X+\Delta_\varepsilon+c_3L_3)|_Z$ 
satisfies Fujita's condition with respect to $\dim Z\leq 1$.
Then we can use induction on dimension and prove that $|M|_Z|$ is base-point free at point $x$
by \cite[Theorem 1.3]{fujino-haidong-freeness} and thus
$|M|$ is base-point free at point $x$ by the vanishing theorem.
Then 
$$
c_3+c_1 \leq \frac{1-4\varepsilon c_3}{1-\varepsilon}\leq \frac{3-8\varepsilon}{3-3\varepsilon}<1.
$$
When $x'$ is singular, we have the same relationship except that $d_0\leq 2$. Then
\begin{equation*}\label{eq5.3}
\begin{split}
c_3+c_1\leq c_3+\frac{d_0-c_3b_3}{b_1}
\leq c_3 +\frac{2\sqrt[3]{2}-c_3\sigma_3}{\sigma_1\sqrt[3]{2}}\\
=\frac{2}{\sigma_1}+(1-\frac{\sigma_3}{\sigma_1\sqrt[3]{2}})c_3
< \frac{2}{3-3\varepsilon}+\frac{1}{4}<1
\end{split}
\end{equation*}
and this is what we want.
\end{case}

\begin{case}\label{case5.3}
Assume that $\dim Z'=2$, that is, $Z'$ is an irreducible surface normal at $x'$.
Since $X'$ is Gorenstein and $Z'$ is $\mathbb Q$-Cartier by assumption,
$Z'$ is Cartier by \cite[Lemma 5.1]{kw2}.
Since $(X', Z')$ is plt at around $x'$,
$K_{Z'}=(K_{X'}+Z')|_{Z'}$ by adjunction and thus $Z'$ is also Gorenstein. 
%% or take a resolution.
In particular, $Z'$ has at most rational double point. Let $m_2=\mult_{x'} Z'$.
Then $m_2\leq 2$.
By condition $(2)$, there is an effective $\mathbb R$-Cartier divisor 
$L'_{Z'} \sim_{\mathbb R} N'_\varepsilon|_{Z'} $ on $Z'$ such that 
$\ord_{x'} L'_{Z'} \geq \frac{\sigma_2}{\sqrt{m_2}}$.
By a nice lifting, there is an effective $\mathbb R$-Cartier divisor 
$L'_2 \sim_{\mathbb R} N'_\varepsilon$ on $X'$ such that  
$b_2:=\ord_{x'} L'_2  \geq \ord_{x'} L'_{Z'} \geq \frac{\sigma_2}{\sqrt{m_2}}$.
Let $d_0=d_{x'}$ and $d_1=d_{x'}(c_3L'_3)$.
Let $b_3=\ord_{x'} L'_3 \geq \frac{\sigma_3}{\sqrt[3]{m_1}}$.
The same as Case \ref{case5.2}, we can assume that $c_3\geq \frac{1}{3}$.
By Proposition \ref{prop-def} and \cite[Lemma 3.3]{lee} (or \cite[Theorem 2.2]{kw},
where we only need to get a $L'_3$ such that $b_3>3+\sqrt{2}$ in the last case of the following conditions), 
\begin{equation}\label{eq5.3}
\begin{split}
\ord_{x'} L'_2 \geq 3- \varepsilon> 3(1-c_3)\geq d_0-c_3b_3 \geq d_1
\quad \text{if $m_1=1$ and $m_2$=1,}\\
\ord_{x'} L'_2 \geq 3- \varepsilon> 2\geq d_0-c_3b_3 \geq d_1
\quad \text{if $m_1=2$ and $m_2$=1,}\\
\ord_{x'} L'_2 \geq \frac{3- \varepsilon}{\sqrt{2}}> 2\geq d_0-c_3b_3 \geq d_1
\quad \text{if $m_1=2$ and $m_2$=2, }\\
\ord_{x'} L'_2 \geq \frac{3- \varepsilon}{\sqrt{2}}>3-\frac{3+\sqrt{2}}{3}\geq d_0-c_3b_3 \geq d_1
\quad \text{if $m_1=1$ and $m_2$=2.}
\end{split}
\end{equation}
Therefore, there is a maximal real number $0<c_2<1$ such that
$(X'_0, (\Delta'_\varepsilon+c_3L'_3+c_2L'_2)|_{X'_0})$ is log canonical at around point $x'$.
By \eqref{eq5.3} and Lemma \ref{key-lem}, we assume that $L'_2$ is $G$-invariant 
and thus $L'_2=p^*L_2$ where $(X, \Delta_\varepsilon+c_3L_3+c_2L_2)$ is log canonical at around $x$.
Let $S'$ be the closure of the minimal lc center of 
$(X'_0, (\Delta'_\varepsilon+c_3L'_3+c_2L'_2)|_{X'_0})$.
If $\dim S'=0$, then the same as Case \ref{case5.2}, we only need to show 
that $c_3+c_2<1$. 

\medskip

If $m_1=1$ and $m_1=1$, then
$$
c_3+c_2\leq c_3+\frac{d_0-c_3b_3}{b_2}
\leq c_3 +\frac{3-c_3\sigma_3}{\sigma_2}=\frac{3}{\sigma_2}+(1-\frac{\sigma_3}{\sigma_2})c_3
= \frac{1-4\varepsilon c_3}{1-\varepsilon}\leq \frac{3-4\varepsilon}{3-3\varepsilon}<1.
$$

If $m_1=2$ and $m_2=1$, then
$$
c_3+c_2 \leq c_3 +\frac{2\sqrt[3]{2}-c_3\sigma_3}{\sigma_2\sqrt[3]{2}}=
\frac{2}{\sigma_2}+(1-\frac{\sigma_3}{\sigma_2\sqrt[3]{2}})c_3
< \frac{2}{3-3\varepsilon}+\frac{1}{4}<1.
$$

If $m_1=2$ and $m_2=2$, then
$$
c_3+c_2 \leq c_3 +\frac{2\sqrt{2}\sqrt[3]{2}-\sqrt{2}c_3\sigma_3}{\sigma_2\sqrt[3]{2}}=
\frac{2\sqrt{2}}{\sigma_2}+(1-\frac{\sqrt{2}\sigma_3}{\sigma_2\sqrt[3]{2}})c_3
< \frac{2\sqrt{2}}{3-3\varepsilon}<1.
$$

Finally we consider the case $m_1=1$ and $m_2=2$. 
By choosing $\varepsilon$ small enough and \cite[Lemma 3.3]{lee}, we have 
\begin{equation}\label{eq5.4}
\frac{b_3}{b_3-2}<\frac{\sqrt{2}b_3}{b_3-1}<
\frac{(2-3\varepsilon)\sigma_3}{\sqrt{2}(\sigma_3-1)}<\frac{\sigma_2}{\sqrt{2}}\leq b_2.
\end{equation}
If $d_0-c_3b_3\geq 1$, then
$$
c_3+c_2 \leq c_3+\frac{d_0-c_3b_3}{b_2}=\frac{d_0}{b_2}+(1-\frac{b_3}{b_2})c_3
\leq \frac{1}{b_2}+\frac{d_0-1}{b_3}<\frac{b_3-2}{b_3}+\frac{d_0-1}{b_3}\leq 1;
$$
and if $d_0-c_3b_3<1$, then by \eqref{eq5.4} and simple calculation, we have that: 
$$
d_0-c_3b_3<(1-c_3)b_2.
$$
Therefore, 
$$
c_3+c_2 \leq c_3+\frac{d_0-c_3b_3}{b_2}<c_3+(1-c_3)<1.
$$
\end{case}

\begin{case}\label{case5.4}
We continue to discuss Case \ref{case5.3}. Assume that $\dim S'=1$.
That is, $S'$ is an irreducible curve smooth at $x'$.
By condition $(3)$, there is an effective $\mathbb R$-Cartier divisor 
$L'_{S'} \sim_{\mathbb R} N'_\varepsilon|_{S'} $ on $S'$ such that $\ord_{x'} L'_{S'} \geq \sigma_1$.
By a nice lifting, there is an effective $\mathbb R$-Cartier divisor 
$L'_1 \sim_{\mathbb R} N'_\varepsilon$ on $X'$ such that  
$b_1:=\ord_{x'} L'_1  \geq \ord_{x'} L'_{S'} \geq\sigma_1$.
Let $d_2=d_{x'}(c_3L'_3+c_2L'_2)$ and $b_2=\ord_{x'} L'_2$.
By Proposition \ref{prop-def}, 
\begin{equation}\label{eq5.5}
\ord_{x'} L'_1  \geq \sigma_1 =3(1-\varepsilon)> 1\geq d_2.
\end{equation}
Therefore, there is a maximal real number $0<c_1<1$ such that
$(X'_0, (\Delta'_\varepsilon+c_3L'_3+c_2L'_2+c_1L'_1)|_{X'_0})$ is log canonical at around point $x'$.
By \eqref{eq5.4} and Lemma \ref{key-lem}, we assume that $L'_1$ is $G$-invariant 
and thus $L'_1=p^*L_1$ where $(X, \Delta_\varepsilon+c_3L_3+c_2L_2+c_1L_1)$ is log canonical at around $x$.
In particular, $x$ is the minimal lc center of   $(X, \Delta_\varepsilon+c_3L_3+c_2L_2+c_1L_1)$.
Again, we only need to show that $c_3+c_2+c_1<1$.  

If $m_1=1$ and $m_2=1$, then
\begin{equation*}
\begin{split}
c_3+c_2+c_1 \leq c_3+c_2+\frac{d_2}{b_1}\leq c_3+c_2+\frac{d_0-c_3b_3-c_2b_2}{b_1}\\
\leq \frac{3}{\sigma_1}+(1-\frac{\sigma_3}{\sigma_1})c_3
= \frac{1-4\varepsilon c_3}{1-\varepsilon}\leq \frac{3-4\varepsilon}{3-3\varepsilon}<1.
\end{split}
\end{equation*}

If $m_1=2$ and $m_2=1$, then
\begin{equation*}
\begin{split}
c_3+c_2+c_1 \leq c_3+c_2+\frac{d_0-c_3b_3-c_2b_2}{b_1}\\
\leq \frac{2}{\sigma_1}+(1-\frac{\sigma_3}{\sigma_1\sqrt[3]{2}})c_3
< \frac{2}{\sigma_1}+\frac{1}{4}<1.
\end{split}
\end{equation*}

If $m_1=2$ and $m_2=2$, then 
$$
c_2\leq \frac{d_1}{b_2}\leq \frac{2-c_3b_3}{b_2},
$$ and
\begin{equation*}
\begin{split}
c_3+c_2+c_1 \leq c_3+c_2+\frac{d_0-c_3b_3-c_2b_2}{b_1}
\leq \frac{2}{\sigma_1}+(1-\frac{b_3}{\sigma_1})c_3
+(1-\frac{b_2}{\sigma_1})c_2\\
\leq  \frac{2}{\sigma_1}+(1-\frac{b_3}{\sigma_1})c_3
+(1-\frac{b_2}{\sigma_1})(\frac{2-c_3b_3}{b_2})=\frac{2}{b_2}+(1-\frac{b_3}{b_2})c_3.
\end{split}
\end{equation*}

If $m_1=1$ and $m_2=2$, then
$$
c_2\leq \frac{d_1}{b_2}\leq \frac{3-c_3b_3}{b_2}
$$ and
\begin{equation*}
\begin{split}
c_3+c_2+c_1 \leq c_3+c_2+\frac{d_0-c_3b_3-c_2b_2}{b_1}
\leq \frac{3}{\sigma_1}+(1-\frac{b_3}{\sigma_1})c_3
+(1-\frac{b_2}{\sigma_1})c_2\\
\leq  \frac{3}{\sigma_1}+(1-\frac{b_3}{\sigma_1})c_3
+(1-\frac{b_2}{\sigma_1})(\frac{3-c_3b_3}{b_2})
=\frac{3}{b_2}+(1-\frac{b_3}{b_2})c_3.
\end{split}
\end{equation*}

The last two inequations are exactly the same as the last two inequations in Case \ref{case3}, 
so we omit the tedious calculations.
\end{case}
Anyway, we finish our proof.
\end{proof}

\begin{rem}\label{rem-term}
Comparing the six conditions at the beginning of above proof 
with that in \cite[Corollary 1.6]{lee}, we see that one of the troubles
to use \cite[Corollary 1.6]{lee} directly
is the small perturbation of $3\varepsilon$ in $(5)$ and $(6)$.
Essentially, we can weaken Fujita's condition by a sufficiently small perturbation
in lower dimensions such as $N^k\cdot Z\geq (\dim X-\varepsilon)^k$ for any subvariety
 with $\dim Z=k<\dim X$. 
This is because in the condition that
$N^{\dim X}>(\dim X)^{\dim X}$, the difference $N^{\dim X}-(\dim X)^{\dim X}$
will give enough room to offset the negative effect of $\varepsilon$.
We already saw this spirit in the proof of \cite[Theorem 3.2]{fujino-haidong-freeness}.
\end{rem}

By Theorem \ref{thm-term}, we can see that the bigger $r$ is, the weaker conditions than
Fujita's condition we need. In particular,
we immediately have the following corollary by Theorem \ref{thm-term}:

\begin{cor}\label{cor-term}
Let $[X, \omega]$ be a projective quasi-log canonical pair such that 
$X$ is a normal $\mathbb Q$-factorial terminal threefold. 
Assume that $x:=\Sing X$ is a point and $x\notin \Nqklt (X,\omega)$. 
Let $M$ be a Cartier divisor on $X$. We put $N=M-\omega$ 
and assume that $N$ satisfies Fujita's condition. 
Then there exists a section $s\in H^0(X, \mathcal I_{\Nqklt(X, \omega)}
\otimes \mathcal O_X(M))$ such that 
$s(x)\ne 0$, where $\mathcal I_{\Nqklt(X, \omega)}$ is 
the defining ideal sheaf of $\Nqklt(X, \omega)$ on $X$. 
\end{cor}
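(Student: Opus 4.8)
The plan is to deduce the corollary directly from Theorem \ref{thm-term} by re-running its inductive procedure and then enlarging the subscheme onto which we restrict sections. First I would check that the full Fujita's condition forces the weaker numerical hypotheses of Theorem \ref{thm-term}. Since $X$ is an irreducible normal $\mathbb Q$-factorial threefold, its only irreducible component is $X$ itself, so $n_Z=\dim X=3$ for every positive-dimensional subvariety $Z$ that is not a component; thus Fujita's condition gives $N^3>27>\tfrac{27}{r}$ and $N^k\cdot Z\geq 3^k$ for all $Z$ with $0<\dim Z=k<3$. Hence Theorem \ref{thm-term} applies, and—more importantly—its proof provides the concrete geometric output I need: an effective $\mathbb R$-divisor $L\sim_{\mathbb R}N_\varepsilon$ and a constant $0<c<1$ so that, writing $\omega'=K_X+\Delta_\varepsilon+cL$, the pair is log canonical near $x$ with $x$ an isolated minimal lc center, while $M-\omega'\sim_{\mathbb R}(1-c)(1-\varepsilon)N$ remains ample.

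The key extra observation is that the nice-lifting property (Definition \ref{defn-lift}) used throughout the inductive procedure keeps $L$ as smooth as possible away from $x$, so that no new non-klt locus is created off $x$; consequently $\Nklt(X,\Delta_\varepsilon+cL)$ coincides with $\Nqklt(X,\omega)$ away from $x$, and $x$ is a genuinely isolated point of it. I would then pass to the qlc structure attached to $\omega'$: its qlc centers are exactly the qlc centers of $[X,\omega]$ (whose union is $\Nqklt(X,\omega)$) together with the newly created isolated center $\{x\}$. Setting $W=\Nqklt(X,\omega)\cup\{x\}$, this is a union of qlc strata of $[X,\omega']$ in which $x$ is isolated, and by adjunction (Theorem \ref{thm-av}(i)) $[W,\omega'|_W]$ is again a quasi-log scheme.

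Finally I would apply the vanishing theorem (Theorem \ref{thm-av}(ii)) to $\pi:X\to\Spec\mathbb C$ with $X'=W$ and $L=M$: because $M-\omega'$ is ample, hence nef and log big with respect to $[X,\omega']$, we obtain $H^1(X,\mathcal I_W\otimes\mathcal O_X(M))=0$, so the restriction $H^0(X,\mathcal O_X(M))\to H^0(W,\mathcal O_W(M))$ is surjective. Since $x$ is isolated in $W$, we may split $W=\Nqklt(X,\omega)\sqcup\{x\}$ and choose the section of $\mathcal O_W(M)$ that vanishes on $\Nqklt(X,\omega)$ and is nonzero at $x$; lifting it produces $s\in H^0(X,\mathcal O_X(M))$ with $s|_{\Nqklt(X,\omega)}=0$, that is $s\in H^0(X,\mathcal I_{\Nqklt(X,\omega)}\otimes\mathcal O_X(M))$, and $s(x)\neq 0$, which is exactly the assertion.

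The main obstacle I anticipate is bookkeeping at the qlc level rather than any new numerical estimate. One must verify that the perturbed datum $\omega'$ carries a qlc structure for which both the original locus $\Nqklt(X,\omega)$ and the new isolated center $\{x\}$ are unions of qlc strata, that the log-bigness hypothesis of the vanishing theorem holds on every such stratum (guaranteed by ampleness of $(1-c)(1-\varepsilon)N$), and that the nice lifting indeed prevents $\Nqklt(X,\omega)$ and $x$ from meeting, so that the splitting of $W$ into $\Nqklt(X,\omega)\sqcup\{x\}$ is legitimate and the chosen section on $W$ exists.
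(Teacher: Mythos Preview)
Your overall strategy is close to the paper's, but there is a genuine gap in the ``key extra observation''. You claim that the nice-lifting property forces $\Nklt(X,\Delta_\varepsilon+cL)$ to coincide with $\Nqklt(X,\omega)$ away from $x$, and hence that the qlc centers of $[X,\omega']$ are exactly those of $[X,\omega]$ together with $\{x\}$. This is not true. In the inductive procedure of Theorem~\ref{thm-term} the \emph{first} divisor $L_3$ is produced by Riemann--Roch (on the index-one cover) solely to have large order at $x'$; it is not a nice lifting of anything, and there is no control whatsoever on its singularities away from $x$. Thus $(X,\Delta_\varepsilon+c_3L_3)$ is only asserted to be log canonical ``at around $x$'', and in general $\Nqlc(X,\omega')$ and new qlc centers appear away from $x$. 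Consequently your $W=\Nqklt(X,\omega)\cup\{x\}$ need not contain $\Nqlc(X,\omega')$, so it is not an admissible $X'$ in Theorem~\ref{thm-av}\,(ii), and the surjectivity of the restriction map to $W$ does not follow.

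The paper fixes this by reversing the containment you need. One takes $V$ to be the union of qlc centers of $[X,\omega']$ through $x$ and $W$ the closure of $\Nqklt(X,\omega')\setminus V$; then $W\cup\{x\}$ is a legitimate union of $\Nqlc(X,\omega')$ with qlc strata, so vanishing gives a section $s$ with $s(x)\ne 0$ and $s|_W=0$. The crucial point you are missing is that since each $L_i$ is effective, one has $\Nqklt(X,\omega)=\Nqklt(X,K_X+\Delta_\varepsilon)\subset W$, so any section vanishing on $W$ automatically lies in $H^0(X,\mathcal I_{\Nqklt(X,\omega)}\otimes\mathcal O_X(M))$. In other words, you should enlarge $W$ to absorb the uncontrolled new non-lc locus and then use the easy inclusion $\Nqklt(X,\omega)\subset W$, rather than try (unsuccessfully) to prevent that locus from appearing.
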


\begin{proof}
In the proof of Theorem \ref{thm-term},
we in fact get an induced quasi-log structure
$$
[X, K_X+\Delta_\varepsilon+c_3L_3+c_2L_2+c_1L_1]
$$
such that $x$ is the minimal qlc center of this quasi-log structure
and $c_3+c_2+c_1<1$ where $c_i$ may be zero for some $i$.
Let $V$ be the union of all irreducible qlc centers of 
$[X, K_X+\Delta_\varepsilon+c_3L_3+c_2L_2+c_1L_1]$ passing through $x$.
Let $W$ be the closure of 
$$
\Nqklt(X, K_X+\Delta_\varepsilon+c_3L_3+c_2L_2+c_1L_1)\backslash V.
$$ Note that $W$ is a union of some qlc centers and $\Nqlc (X, K_X+\Delta_\varepsilon+c_3L_3+c_2L_2+c_1L_1)$. 
Then:
$$
M-(K_X+\Delta_\varepsilon+c_3L_3+c_2L_2+c_1L_1)\sim_{\mathbb R}(1-c_1-c_2-c_3)(1-\varepsilon)N
$$ 
is ample, and the natural restriction map 
\begin{equation*}
H^0(X, \mathscr O_X(M))\to H^0(W\cup x, \mathscr O_{W\cup x}(M))
\end{equation*}
is surjective by the vanishing theorem.
Since $x$ is an isolated point in $W\cup x$, there exists a section $s\in H^0(X, \mathcal O_X(M))$ such that 
$s(x)\ne 0$ and $s(W)=0$. 
Since every $L_i$ is effective, we have that
$$
\Nqklt(X, \omega)=
\Nqklt(X, K_X+\Delta_\varepsilon) \subset W
$$
by the construction of $W$. Therefore, $s\in H^0(X, \mathcal I_{\Nqklt(X, \omega)}
\otimes \mathcal O_X(M))$ and $s(x)\ne 0$.
\end{proof}

\setcounter{case}{0}
\section{Freeness for qlc singularities}\label{sec6}

First we deal with the normal case.

\begin{thm}\label{thm-norm}
Let $[X, \omega]$ be a projective quasi-log canonical pair such that 
$X$ is a normal threefold. 
Let $x$ be a closed point on $X$.
Let $M$ be a Cartier divisor on $X$. We put $N=M-\omega$ 
and assume that $N$ satisfies Fujita's condition. 
Then the complete linear system $|M|$ is basepoint-free at point $x$.
\end{thm}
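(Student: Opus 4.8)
The plan is to prove basepoint-freeness at $x$ by reducing to the two situations that have already been handled, namely $x\in\Nqklt(X,\omega)$ (covered by adjunction together with induction on dimension) and $x\notin\Nqklt(X,\omega)$ on a $\mathbb Q$-factorial terminal threefold (covered by Theorem~\ref{thm-term} and Corollary~\ref{cor-term}). The whole point of the theorem is to bridge the gap between a general normal threefold and the terminal $\mathbb Q$-factorial case where the global index one cover technique of Section~\ref{sec4} is available. Accordingly, I would first dispose of the easy branch: if $x\in\Nqklt(X,\omega)$, there is an irreducible minimal qlc center $W\ni x$ with $\dim W<3$, and by Theorem~\ref{thm-av}(i) the pair $[W,\omega|_W]$ is quasi-log canonical. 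Since $N|_W$ still satisfies Fujita's condition with respect to $\dim W$, and $N$ is nef and log big, the restriction $H^0(X,\mathcal O_X(M))\to H^0(W,\mathcal O_W(M))$ is surjective by Theorem~\ref{thm-av}(ii); then $|M|_W|$ is basepoint-free at $x$ by the lower-dimensional results of \cite{fujino-haidong-freeness}, and a preimage section separates $x$ on $X$.

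\emph{The substantive branch} is $x\notin\Nqklt(X,\omega)$, where $X$ is normal at $x$. Following the strategy sketched in the introduction, I would use Theorem~\ref{u-thm} to produce a boundary $\mathbb R$-divisor $\Delta$ with $K_X+\Delta\sim_{\mathbb R}\omega+\varepsilon N$ and $(X,\Delta)$ klt near $x$ for $0<\varepsilon\ll 1$, absorbing the loss in Fujita's condition into the slack coming from $N^3>3^3$ as in Remark~\ref{rem-term}. Then I would pass to the terminalization $h:\widetilde X\to X$ of Lemma~\ref{term-lem}, obtaining a quasi-log canonical pair $[\widetilde X,h^*\omega]$ on a $\mathbb Q$-factorial terminal threefold. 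The goal is to prove the \emph{stronger} statement that $|h^*M|$ separates a general smooth point $\widetilde x\in h^{-1}(x)$ from $\Nqklt(\widetilde X,h^*\omega)$; this is exactly the conclusion packaged in Corollary~\ref{cor-term}, applied on $\widetilde X$ after checking that $h^*N$ still meets the (perturbed) numerical hypotheses of Theorem~\ref{thm-term}. Since $\dim h^{-1}(x)\ge 1$ whenever $x$ is not already terminal, one has room to choose $\widetilde x$ generic and smooth, which is what Theorem~\ref{thm-term} requires.

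\emph{The descent} is where I would expect the main difficulty. Having a section $\widetilde s\in H^0\bigl(\widetilde X,\mathcal I_{\Nqklt(\widetilde X,h^*\omega)}\otimes\mathcal O_{\widetilde X}(h^*M)\bigr)$ with $\widetilde s(\widetilde x)\ne 0$, I must push it down to a section of $\mathcal O_X(M)$ nonvanishing at $x$. The mechanism is the vanishing theorem: one considers $\mathcal I_{W}\otimes\mathcal O_X(M)$ where $W$ is the union of $\Nqlc$ with the appropriate qlc centers produced by the inductive procedure, and uses that $h$ contracts $h^{-1}(x)$ to the single point $x$ so that $h_*$ of the separating section descends. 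The delicate points are: (i) ensuring that the qlc structure built upstairs descends compatibly, for which Corollary~\ref{term-cor} (the equivalence of qlc-ness at $x$ with qlc-ness at every point of $h^{-1}(x)$, and the connectedness of the relevant union of centers) is the key input; and (ii) confirming that separating $\widetilde x$ from $\Nqklt(\widetilde X,h^*\omega)$ upstairs really does yield separation of $x$ from $\Nqklt(X,\omega)$ downstairs, i.e.\ that no new base locus is introduced along the contracted fiber. Once connectedness guarantees that the center containing $\widetilde x$ does not meet the rest of the exceptional fiber's centers, the pushforward section is nonzero at $x$, and basepoint-freeness at $x$ follows. The terminal index-one-cover case itself, where $\dim h^{-1}(x)=0$, is handled directly by invoking Corollary~\ref{cor-term} on $X$ without any further contraction.
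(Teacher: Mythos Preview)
Your first branch ($x\in\Nqklt(X,\omega)$) matches the paper exactly. The gap is in the second branch.

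You propose to pass to the terminalization $h:\widetilde X\to X$ and then apply Theorem~\ref{thm-term}/Corollary~\ref{cor-term} \emph{on $\widetilde X$} to produce a separating section $\widetilde s$, which you would then descend. But the hypotheses of Theorem~\ref{thm-term} do not hold on $\widetilde X$: the pulled-back divisor $\widetilde N=h^*N$ is only nef, not ample, and Fujita's condition fails outright, since $\widetilde N\cdot C=0$ for every curve $C$ contracted by $h$ (and similarly $\widetilde N^2\cdot S=0$ for any contracted surface). So the inductive procedure cannot be run on $\widetilde X$ alone: whenever the minimal center $\widetilde Z$ lands inside the exceptional locus, you have no Riemann--Roch input to cut it down further. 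For the same reason the vanishing theorem (Theorem~\ref{thm-av}(ii)) is not available on $\widetilde X$, because $h^*N$ is not big on the qlc strata contained in fibers of $h$; thus your proposed ``descent'' step, which needs vanishing upstairs, would also stall.

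The paper's proof does not try to run the argument on $\widetilde X$. Instead it runs the inductive procedure on \emph{both levels at once}: the auxiliary divisors $L_i$ are constructed on $X$, where $N$ is genuinely ample and Fujita's condition holds, and then pulled back to $\widetilde L_i=h^*L_i$ on $\widetilde X$ solely to measure orders and deficits at the \emph{smooth} point $\widetilde x$. Corollary~\ref{term-cor} then transfers log-canonicity and the minimal-center data between $X$ and $\widetilde X$. This forces a case analysis on whether the minimal center $\widetilde Z\subset\widetilde X$ is contracted by $h$: when $\dim\widetilde Z>\dim h(\widetilde Z)$ the image $h(\widetilde Z)$ has strictly lower dimension, so the procedure on $X$ actually terminates faster; when $\dim\widetilde Z=\dim h(\widetilde Z)$, Fujita's condition on $X$ gives the needed intersection numbers for $\widetilde Z$ via $\widetilde N^k\cdot\widetilde Z=N^k\cdot h(\widetilde Z)$, and the computation proceeds exactly as in Theorem~\ref{thm-term}. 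In every case the final vanishing is applied on $X$, not on $\widetilde X$, so no section-descent along $h$ is ever needed. This ``go up for multiplicity, go down for ampleness and vanishing'' alternation is precisely the extra content missing from your outline.
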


\begin{proof}
Assume that $x\in \Nqklt (X,\omega)$. 
Let $W$ be the minimal irreducible qlc center passing through $x$.
Then by adjunction theorem, $[W, \omega|_W]$ is an induced qlc pair. 
Note that $\dim W\leq 2$. Then
it is easy to check that $N|_W=M|_W-\omega|_W$ also satisfies Fujita's condition 
with respect to $\dim W$. Therefore, $|M|_W|$ is basepoint-free at point $x$
by \cite{fujino-haidong-freeness}. 
By the vanishing theorem, the natural restriction map 
\begin{equation*}
H^0(X, \mathscr O_X(M))\to H^0(W, \mathscr O_W(M))
\end{equation*}
is surjective. Therefore, $|M|$ is also basepoint-free at point $x$.

\medskip

Then we assume that
$x\notin \Nqklt (X,\omega)$. Let $h: \widetilde{X} \to X$ be the $\mathbb Q$-factorial terminalization 
and $[\widetilde{X}, \widetilde{\omega}]$  be the induced qlc pair by Lemma \ref{term-lem}.
In particular, $\widetilde{X}$ is $\mathbb Q$-factorial and terminal.
By Corollary \ref{cor-term}, we can assume that $\dim h^{-1}(x)\geq 1$. Since $\dim \Sing \widetilde{X}=0$, 
we can choose a general point $\widetilde{x} \in h^{-1}(x) $
which is smooth on $\widetilde{X}$. Let $\widetilde{M}=h^*M$ and $\widetilde{N}=h^*N$.
By using Theorem \ref{u-thm}, we can take a 
boundary $\mathbb R$-divisor $\Delta_\varepsilon$ on 
$X$ with $K_X+\Delta_\varepsilon \sim _{\mathbb R} \omega+\varepsilon N$ for 
$0<\varepsilon \ll 1$ such that  $(X, \Delta_\varepsilon)$ is 
klt in a neighborhood of $x$,
and a boundary $\mathbb R$-divisor $\widetilde{\Delta}_\varepsilon$ on 
$ \widetilde{X} $ with
$K_{\widetilde{X}}+\widetilde{\Delta}_\varepsilon=
h^*(K_X+\Delta_\varepsilon)\sim _{\mathbb R} \widetilde{\omega}+\varepsilon  \widetilde{N}$ 
such that $(\widetilde{X}, \widetilde{\Delta}_\varepsilon)$ is 
klt in a neighborhood of $\widetilde{x}$.
Let $N_\varepsilon=M-(K_X+\Delta_\varepsilon)\sim_{\mathbb R} (1-\varepsilon)N$ and
$\widetilde{N}_\varepsilon=h^*N_\varepsilon\sim_{\mathbb R}(1-\varepsilon)\widetilde{N}$.
Let $\sigma_3=3(1+\varepsilon)$, $\sigma_2=\sigma_1=3(1-\varepsilon)$. Then
by choosing $\varepsilon$ small enough, we have that:
\begin{itemize}
\item[$(1)$] $\widetilde{N}_\varepsilon^3=N_\varepsilon^3>\sigma^3_3>27$,
\item[$(2)$] $\widetilde{N}_\varepsilon^2 \cdot S =N_\varepsilon^2 \cdot  h(S)\geq \sigma^2_2$ 
for any irreducible surface $S$ such that $\dim h(S)=2$,
\item[$(3)$] $\widetilde{N}_\varepsilon \cdot C=N_\varepsilon \cdot h(C)\geq \sigma_1$ 
for any irreducible curve $C$ such that $\dim h(C)=1$.
\end{itemize}
By condition $(1)$, we can find an effective $\mathbb R$-Cartier divisor 
$L_3 \sim_{\mathbb R} N_\varepsilon$, 
$\widetilde{L}_3=h^*L_3\sim_{\mathbb R} \widetilde{N}_\varepsilon$,  and
a maximal positive number $0<c_3< 1$ such that 
$b_3:=\ord_{\widetilde{x}}\widetilde{L}_3 > \sigma_3$,
$(\widetilde{X}, \widetilde{\Delta}_\varepsilon+\widetilde{L}_3)$ is not lc at point $\widetilde{x}$ 
but $(\widetilde{X}, \widetilde{\Delta}_\varepsilon+c_3 \widetilde{L}_3)$ is lc at point $\widetilde{x}$.
Let $\widetilde Z$ be the irreducible minimal lc center passing through $\widetilde{x}$. Then
$\dim \widetilde Z\leq 2$ and $\widetilde Z$ is normal at around point $\widetilde{x}$.
Note that $\widetilde Z$ may be contained in the exceptional locus of $h$.
We discuss various cases according to the dimension of $\widetilde Z$.

\begin{case}\label{case1}
Assume that $\dim \widetilde Z>\dim h(\widetilde Z)$.
If $\dim h(\widetilde Z)=0$, that is, $x=h(\widetilde Z)$ since 
$\widetilde  Z$ passing through $\widetilde{x}\in h^{-1}(x)$,
then by Corollary \ref{term-cor}, $x$  is the minimal lc center of $(X, \Delta_\varepsilon+c_3L_3)$.
Note that we get a natural quasi-log structure on $[X, K_X+\Delta_{\varepsilon}+c_3L_3]$. 
That is, $x$ is the minimal qlc center of $[X, K_X+\Delta_{\varepsilon}+c_3L_3]$
since  $x$ is the minimal lc center of 
$(X, \Delta_\varepsilon+c_3L_3)$. 
Let $W=\Nqlc(X, K_X+\Delta_{\varepsilon}+c_3L_3)\cup x$.
Since $M-(K_X+\Delta_\varepsilon+c_3L_3)\sim_{\mathbb R}(1-c_3)(1-\varepsilon)N$ is ample, 
the natural restriction map 
\begin{equation*}
H^0(X, \mathscr O_X(M))\to H^0(W, \mathscr O_W(M))
\end{equation*}
is surjective by the vanishing theorem. Since $x$ is an isolated point in $W$,
it is obviously that $|M|$ is basepoint-free at point $x$.

\medskip

The left case for $\dim \widetilde Z>\dim h(\widetilde Z)$ 
is that $\dim \widetilde Z=2$ and $\dim h(\widetilde Z)=1$. Let $C=h(\widetilde Z)$ be the curve on $X$.
Note that $C$ is the minimal lc center of $(X, \Delta_\varepsilon+c_3L_3)$ by Corollary \ref{term-cor}.
By condition $(3)$, there is an effective $\mathbb R$-Cartier divisor 
$L_C \sim_{\mathbb R} N_\varepsilon|_C$ on $C$ such that $\ord_x L_C \geq \sigma_1>1$
and $[C, (K_X+\Delta_\varepsilon+c_3L_3)|_C+L_C]$ is not qlc at point $x$.
By a nice lifting, there is an effective $\mathbb R$-Cartier divisor 
$L_1 \sim_{\mathbb R} N_\varepsilon$ on $X$ such that $\ord_x L_1 \geq \sigma_1$
and $(X, \Delta_\varepsilon+c_3L_3+L_1)$ is not lc at point $x$.
Therefore, there is a maximal number $0<c_1<1$ such that $(X, \Delta_\varepsilon+c_3L_3+c_1L_1)$ is lc
at point $x$ and $x$ is exactly the minimal lc center of  $(X, \Delta_\varepsilon+c_3L_3+c_1L_1)$.
We need to show that $c_3+c_1<1$. Then
$$
M-(K_X+\Delta_\varepsilon+c_3L_3+c_1L_1)\sim_{\mathbb R}(1-c_3-c_1)(1-\varepsilon)N
$$ 
is ample, and the natural restriction map 
\begin{equation*}
H^0(X, \mathscr O_X(M))\to H^0(W, \mathscr O_W(M))
\end{equation*}
is surjective by the vanishing theorem where 
$W=\Nqlc(X, K_X+\Delta_{\varepsilon}+c_3L_3+c_1L_1)\cup x$.
Since $x$ is an isolated point in $W$,
it is obviously that $|M|$ is basepoint-free at point $x$.

\medskip

Therefore, we prove that $c_3+c_1<1$ in the rest of this case.
Let $\widetilde{L}_1=h^*L_1$. Then by Corollary \ref{term-cor},
$(\widetilde{X}, \widetilde{\Delta}_\varepsilon+c_3 \widetilde{L}_3+c_1\widetilde{L}_1)$ 
is lc at point $\widetilde{x}$.
Note that
$\ord_{h^{-1}(x)} \widetilde{L}_1=\ord_x L_1\geq \sigma_1$.
Since $\mathcal I^k_{h^{-1}(x)} \to \mathcal I^k_{\widetilde{x}}$ is injective
for any positive integer $k$,
$$
\ord_{\widetilde{x}} \widetilde{L}_1 \geq \ord_{h^{-1}(x)} \widetilde{L}_1\geq \sigma_1.
$$
Let $b_3=\ord_{\widetilde{x}} \widetilde{L}_3> \sigma_3$ 
and $b_1=\ord_{\widetilde{x}} \widetilde{L}_1 \geq \sigma_1$.
Let $d_0=d_{\widetilde{x}}$ and $d_1=d_{\widetilde{x}}(c_3L_3)$. Then by Proposition \ref{prop-def}:
\begin{itemize}
\item[$(1)$] $d_0 \leq 3$,
\item[$(2)$] $c_3b_3 \leq d_0$,
\item[$(3)$] $d_1\leq d_0-c_3b_3$, and
\item[$(4)$] $c_1b_1 \leq d_1$,
\end{itemize}
and thus
$$
c_3+c_1\leq c_3+\frac{d_0-c_3b_3}{b_1}
\leq \frac{3}{\sigma_1}+(1-\frac{\sigma_3}{\sigma_1})c_3
= \frac{1-2\varepsilon c_3}{1-\varepsilon}
\leq \frac{3-4\varepsilon}{3-3\varepsilon}<1
$$
by adding assumption that $c_3\geq \frac{2}{3}$ 
as in Case \ref{case5.2} of Theorem \ref{thm-term}.
\end{case}

\begin{case}\label{case2}
Assume that $\dim Z=\dim h(Z)=2$. Then by condition $(2)$ 
we can find an effective $\mathbb R$-Cartier divisor 
$L_2 \sim_{\mathbb R} N_\varepsilon$, 
$\widetilde{L}_2=h^*L_2\sim_{\mathbb R} \widetilde{N}_\varepsilon$,  and
a maximal positive number $0<c_2< 1$ such that 
$b_2:=\ord_{\widetilde{x}}\widetilde{L}_2 \geq \sigma_2$,
$(\widetilde{X}, \widetilde{\Delta}_\varepsilon+c_3\widetilde{L}_3+\widetilde{L}_2)$ 
is not lc at point $\widetilde{x}$ 
but $(\widetilde{X}, \widetilde{\Delta}_\varepsilon+c_3\widetilde{L}_3+c_2\widetilde{L}_2)$ 
is lc at point $\widetilde{x}$.
Let $\widetilde T$ be the irreducible minimal lc center passing through $\widetilde{x}$. Then
$\dim \widetilde T\leq 1$ and $\widetilde T$ is normal at around point $\widetilde{x}$.
We further assume in this case that $\dim \widetilde T > \dim h(\widetilde T)$. 
That is, $x=h(\widetilde T)$.

\medskip

Under these assumptions, we only need to prove that $c_3+c_2<1$ and the rest are the same
as Case \ref{case1}. By \cite[Theorem 2.2]{kw} or \cite[Lemma 3.3]{lee},
we can further assume that $b_3=\ord_{\widetilde{x}} \widetilde{L}_3> 3+\sqrt{2}$.
Then we can get $c_3+c_2<1$ exactly the same as Case \ref{case5.3} of Theorem \ref{thm-term}.
\end{case}

\begin{case}\label{case3}
Besides above cases, we have that $\dim \widetilde Z=\dim h(\widetilde Z)$ 
and $\dim \widetilde T=\dim h(\widetilde T)$ 
(when $\dim \widetilde Z=\dim h(\widetilde Z)=2$). 
Then condition $(2)$ and $(3)$ 
make sure that we could create the inductive procedure 
exactly the same as the proof of Theorem \ref{thm-term}.
\end{case}
Anyway, we finish our proof of Fujita-type freeness for normal qlc threefolds.
\end{proof}

\begin{rem}\label{rem6.2}
As we saw in above proof, 
to create the desired inductive procedure 
we only need the Fujita's condition holds true for those possible lc (or qlc)
minimal centers.
\end{rem}

Finally, we finish our proof of Fujita-type basepoint-freeness for general quasi-log canonical threefolds.

\begin{thm}\label{thm-general}
Let $[X, \omega]$ be a  projective quasi-log canonical threefold.
Let $x$ be a closed point on $X$.
Let $M$ be a Cartier divisor on $X$. We put $N=M-\omega$ 
and assume that $N$ satisfies Fujita's condition. 
Then the complete linear system $|M|$ is basepoint-free at $x$.
\end{thm}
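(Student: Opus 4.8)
The plan is to reduce Theorem \ref{thm-general} to the normal three-dimensional case established in Theorem \ref{thm-norm}, in a form strong enough to descend and glue sections. Fix the closed point $x$. Since $[X,\omega]$ is a qlc pair, $\Nqlc(X,\omega)=\emptyset$ and $\Nqklt(X,\omega)$ is just the union of the qlc centers. If $x\in\Nqklt(X,\omega)$, I would take a minimal irreducible qlc center $W$ through $x$; as $W$ is not an irreducible component, $\dim W\leq 2$, and by adjunction (Theorem \ref{thm-av} (i)) $[W,\omega|_W]$ is a qlc pair. One checks directly from Notation \ref{not1.1} that $N|_W=M|_W-\omega|_W$ still satisfies Fujita's condition with respect to $\dim W$, so $|M|_W|$ is basepoint-free at $x$ by the lower-dimensional result \cite{fujino-haidong-freeness}. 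The vanishing theorem (Theorem \ref{thm-av} (ii)) then makes $H^0(X,\mathcal O_X(M))\to H^0(W,\mathcal O_W(M))$ surjective, so $|M|$ is basepoint-free at $x$.

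The essential case is $x\notin\Nqklt(X,\omega)$. I would use the standard fact that the non-normal locus of $X$ and all pairwise intersections of its irreducible components are unions of qlc centers, hence contained in $\Nqklt(X,\omega)$; therefore $x$ lies on a unique irreducible component $X_0$, along which $X$ is normal at $x$. Applying Theorem \ref{thm-av} (i) with $X'=X_0$ (valid because $\Nqlc=\emptyset$), $[X_0,\omega|_{X_0}]$ is a qlc pair whose qlc centers are exactly those of $[X,\omega]$ contained in $X_0$. Let $\nu:\widetilde{X_0}\to X_0$ be the normalization. By Theorem \ref{lem-normal}, $[\widetilde{X_0},\nu^*\omega]$ is a normal qlc pair with $\Nqklt(\widetilde{X_0},\nu^*\omega)=\nu^{-1}(\Nqklt(X_0,\omega|_{X_0}))$, so $\widetilde x:=\nu^{-1}(x)$ avoids it. Because $\nu$ is finite and birational, intersection numbers are preserved and Fujita's condition for $N$ transfers to $\nu^*N=\nu^*M-\nu^*\omega$ on $\widetilde{X_0}$; in particular $(\nu^*N)^3\cdot\widetilde{X_0}=N^3\cdot X_0>27$ when $\dim X_0=3$, while if $\dim X_0<3$ one argues by the lower-dimensional result instead.

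The point is that basepoint-freeness on $\widetilde{X_0}$ alone is not enough to return to $X$: what I need is a \emph{separating} refinement of Theorem \ref{thm-norm}, namely a section $\widetilde s\in H^0(\widetilde{X_0},\mathcal I_{\Nqklt(\widetilde{X_0},\nu^*\omega)}\otimes\mathcal O_{\widetilde{X_0}}(\nu^*M))$ with $\widetilde s(\widetilde x)\neq 0$. This comes from running the same inductive procedure as in Theorem \ref{thm-norm}: it produces an induced quasi-log structure in which $\widetilde x$ is an isolated qlc center with total coefficient strictly less than $1$, and then, exactly as in Corollary \ref{cor-term}, one lets $W$ absorb the closure of $\Nqklt$ minus the centers through $\widetilde x$ and concludes by the vanishing theorem. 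Pushing $\widetilde s$ forward and combining the projection formula with the ideal-sheaf equality $\nu_*\mathcal I_{\Nqklt(\widetilde{X_0})}=\mathcal I_{\Nqklt(X_0)}$ from Theorem \ref{lem-normal} (ii), I obtain $s_0\in H^0(X_0,\mathcal I_{\Nqklt(X_0,\omega|_{X_0})}\otimes\mathcal O_{X_0}(M))$ with $s_0(x)\neq 0$. Since $X_0$ meets the remaining components only along $\Nqklt(X_0,\omega|_{X_0})$, the section $s_0$ vanishes there and therefore glues with the zero section on $\overline{X\setminus X_0}$, via the Mayer--Vietoris sequence $0\to\mathcal O_X(M)\to\mathcal O_{X_0}(M)\oplus\mathcal O_{\overline{X\setminus X_0}}(M)\to\mathcal O_{X_0\cap\overline{X\setminus X_0}}(M)\to 0$, to a global section $s\in H^0(X,\mathcal O_X(M))$ with $s(x)\neq 0$.

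The main obstacle is exactly this separating enhancement. Plain basepoint-freeness neither descends through the normalization nor glues across components, so the whole reduction rests on producing a section that additionally vanishes on $\Nqklt$; this in turn forces the inductive procedure to be arranged so that $\widetilde x$ ends up an \emph{isolated} qlc center, leaving room for $W$ to swallow all of $\Nqklt$ while keeping $x$ free. The secondary technical point is to verify that Fujita's condition — especially the strict inequality $N^3\cdot X_0>27$, whose slack is what offsets the $\varepsilon$-perturbation of Theorem \ref{u-thm} — genuinely survives the passage to $\widetilde{X_0}$. Once these are in place, the coefficient bookkeeping ensuring $c<1$ in each dimension stratum is identical to the computations already carried out in Theorems \ref{thm-term} and \ref{thm-norm}, so I would simply invoke them.
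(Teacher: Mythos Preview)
Your proposal is correct and follows the paper's approach in all essential respects: reduce via adjunction to an irreducible qlc stratum through $x$, normalize, invoke the proof of Theorem~\ref{thm-norm} in its strengthened form (the section vanishing on $\Nqklt$, as in Corollary~\ref{cor-term}), and descend using the ideal-sheaf equality of Theorem~\ref{lem-normal}~(ii).

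The one place you diverge is the final step of passing from the irreducible component $X_0$ back to $X$. You glue $s_0$ with the zero section via a Mayer--Vietoris sequence, whereas the paper simply uses the vanishing theorem: taking $W$ to be the minimal qlc \emph{stratum} (not just center) through $x$ from the outset, Theorem~\ref{thm-av}~(ii) already gives the surjection $H^0(X,\mathcal O_X(M))\to H^0(W,\mathcal O_W(M))$, so any section on $W$ lifts directly. This is cleaner and sidesteps the need to check that $s_0$ vanishes \emph{scheme-theoretically} on $X_0\cap\overline{X\setminus X_0}$, which your Mayer--Vietoris argument implicitly requires but does not fully justify (you only argue the set-theoretic containment in $\Nqklt(X_0,\omega|_{X_0})$). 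Replacing your gluing step with the vanishing surjection makes the argument airtight and shorter.
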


\begin{proof}
Let $x$ be an arbitrary closed point of $X$ and
let $W$ be the irreducible minimal qlc stratum of $[X, \omega]$ 
passing through $x$.
By adjunction theorem, $[W, \omega|_W]$ is a quasi-log canonical pair. By 
the vanishing theorem, the natural restriction map 
\begin{equation}\label{eq6.1}
H^0(X, \mathscr O_X(M))\to H^0(W, \mathscr O_W(M))
\end{equation}
is surjective. 
From now on, we will see that $|M|$ is basepoint-free in a neighborhood of
$x$. If $W=x$, that is, $x$ is a qlc center of $[X, \omega]$, 
then the complete linear system $|M|$ is
obviously basepoint-free in a neighborhood of $x$ by the surjection \eqref{eq6.1}. 
Let us consider the case where $0<m=\dim W<3$.
Let $M_W=M|_W$ and $N_W=N|_W=(M-\omega)|_W$.
Then $N_W^m\cdot W=N^m\cdot W \geq 3^m>m^m$
and $N_W^k\cdot Z=N^k\cdot Z \geq 3^k\geq m^k$ for every subvariety 
$Z\subset W$ with $0<\dim Z=k<m$. That is,
$N_W$ also satisfies Fujita's condition.
Using induction of dimension, $|M_W|$ is basepoint-free at $x$ by \cite{fujino-haidong-freeness}.
Combining \eqref{eq6.1}, we see that $|M|$ is basepoint-free at $x$. 
Thus we may assume that $\dim W=\dim X=n$.
It is also easy to check that $N_W=N|_W$ satisfies Fujita's condition as above.
Therefore, by replacing $X$ with $W$, we can assume that $X$ is irreducible 
and $x\notin \Nqklt(X,\omega)$. In particular, $X$ is normal near point $x$.

\medskip

Let $\nu: \widetilde{X}\to X$ be the normalization. 
Note that $[\widetilde{X}, \nu^*\omega]$ is a qlc pair by Lemma \ref{lem-normal}.
We put $\widetilde{M}=\nu^*M$ 
and $\widetilde{N}=\nu^*N=\widetilde{M}-\nu^*\omega$.
It is obvious that  $\widetilde{M}$ is Cartier.
Moreover, $(\widetilde{N})^3 \cdot \widetilde{X}=N^3 \cdot X> 27$
and $(\widetilde{N})^k\cdot Z\geq N^k \cdot \nu(Z)\geq 3^k$ for every subvariety 
$Z\subset \widetilde{X}$ with $0<\dim Z=k<3$.
Note that $\dim \nu(Z)=\dim Z=k$ 
since normalization $\nu$ is finite.
We also note that, $\widetilde x:=\nu^{-1}(x)$ is a point
since $\nu: \widetilde{X}\to X$ is an isomorphism over some open
neighborhood of the normal point $x$,
and that the non-normal part of $X$ is contained in  $\Nqklt(X, \omega)$ by 
Lemma \ref{lem-normal}. 
The same as Corollary \ref{cor-term}, there is a section 
$$
\widetilde{s} \in H^0(\widetilde{X}, \mathcal I_{\Nqklt(\widetilde{X}, \nu^*\omega)}
\otimes \mathcal \mathscr O_{\widetilde{X}}(\widetilde{M})).
$$ 
such that $\widetilde{s}(\widetilde x)\neq 0$ by the proof of Theorem \ref{thm-norm}.
By $\nu_*\mathcal I_{\Nqklt(\widetilde{X}, \nu^*\omega)}=\mathcal I_{\Nqklt(X, \omega)}$
in Lemma \ref{lem-normal}, we have that:
$$
H^0(\widetilde{X}, \mathcal I_{\Nqklt(\widetilde{X}, \nu^*\omega)
}\otimes \mathscr O_{\widetilde{X}}(\widetilde{M})) \cong 
H^0(X,\mathcal I_{\Nqklt(X, \omega)} \otimes 
\mathscr O_{X}(M)).
$$ 
Thus we can descend the section $\widetilde{s}$ to a section
$s\in H^0(X, \mathcal I_{\Nqklt(X, \omega)}
\otimes \mathcal O_X(M))$ and $s(x)\neq 0$.
This $s \in H^0(X, \mathscr O_{X}(M))$ is what we want.
\end{proof}

\section{Appendix}\label{sec7}
Let  $\left(X, \omega, f:(Y, B_Y)\to X\right)$ be a quasi-log canonical pair such that 
$X$ is an $n$-dimensional normal variety and 
$x$ be a closed point such that $x\notin \Nqklt(X,\omega)$.
By the universal property of blowing up, 
we can assume that $f$ factors through the blowing up $\varphi$ defined in Definition \ref{defn-ord} by
morphism $g:Y\to X'$ such that $f=\varphi\circ g$.
That is, there is a commutative diagram:
$$
\xymatrix{
& Y\ar[ld]_-{g}\ar[d]^-f\\ 
X' \ar[r]_-{\varphi}& X.
}
$$
Let $G$ be an effective $\mathbb R$-Cartier divisor 
and $[X,\omega+G]$ be the induced quasi-log structure
by \cite[Lemma 4.6]{fujino-slc-surface}.

\begin{defn}\label{defn-def-qlc}
Let $B_Y=\sum a_iF_i$, $f^*G=\sum b_iF_i$ and
$g^*E=\sum e_iF_i$ where $F_i$ are simple normal crossing divisors. 
If $x\notin\Nqklt(X,\omega+G)$, then the {\em{deficit of $G$}} at $x$, is defined as 
$$
d_x[X,\omega+G]=\inf_{f(F_i)=x}\{\frac{1-a_i-b_i}{e_i}\},
$$
where $f$ varies among all quasi-log resolutions
factoring through $\varphi$.
If $x\in \Nqklt(X,\omega+G)$ but $x\notin \Nqklt(X,\omega+(1-t)G)$ for any $0<t<1$,
then the {\em{deficit of $G$}} at $x$, is defined as 
$$
d_x[X,\omega+G]=\lim_{t\to 0^{+}} d_x[X,\omega+(1-t)G].
$$
\end{defn}

\begin{lem}\label{lem-def-qlc-1}
It is equivalent to define {\em{deficit of $G$}} as the smallest $c\in \mathbb R_{\geq 0}$ such that 
for any effective $\mathbb R$-Cartier divisor $D$ with $\ord_x D\geq c$, 
we have $x\in\Nqklt (X,\omega+G+D)$.
\end{lem}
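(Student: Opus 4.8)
The plan is to show that the resolution-theoretic deficit $d_x[X,\omega+G]$ of Definition \ref{defn-def-qlc} coincides with the threshold described in the statement, by reading off both numbers from the controlling globally embedded simple normal crossing pair. Throughout I would fix one sufficiently high quasi-log resolution $f\colon (Y,B_Y)\to X$ factoring through the blowing up $\varphi$ of $x$, chosen so that the infimum in Definition \ref{defn-def-qlc} is attained: writing $B_Y=\sum_i a_iF_i$, $f^*G=\sum_i b_iF_i$ and $g^*E=\sum_i e_iF_i$, I may assume that $d_x[X,\omega+G]=\min_{f(F_i)=x}\frac{1-a_i-b_i}{e_i}=:c_0$. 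Since by \cite[Lemma 4.6]{fujino-slc-surface} the pair $[X,\omega+G+D]$ carries the induced quasi-log structure dominated by the \emph{same} $(Y,B_Y)$, the condition $x\in\Nqklt(X,\omega+G+D)$ can be detected on $Y$: it holds as soon as some component $F$ with $x\in f(F)$ satisfies $\mult_F\bigl(B_Y+f^*G+f^*D\bigr)\ge 1$ (at the threshold the coefficient is exactly $1$, so that $f(F)$ is a genuine qlc centre through $x$). Let $c_1$ denote the threshold of the statement.

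First I would treat the case $x\notin\Nqklt(X,\omega+G)$. The elementary but crucial input is that for every effective $\mathbb R$-Cartier divisor $D$ one has $\mult_{F_i}(f^*D)\ge(\ord_xD)\,e_i$ for each $F_i$ with $f(F_i)=x$: indeed $\varphi^*D-(\ord_xD)E$ is effective by the very definition of $\ord_xD$, and pulling back by $g$ preserves this. To get $c_1\le c_0$, take any $D$ with $\ord_xD\ge c_0$ and let $F_i$ attain the minimum defining $c_0$; then $\mult_{F_i}\bigl(B_Y+f^*G+f^*D\bigr)\ge a_i+b_i+c_0e_i=1$, so $x\in\Nqklt(X,\omega+G+D)$, which gives $c_1\le c_0$.

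For the reverse inequality $c_1\ge c_0$, for each $c<c_0$ I would exhibit an effective $\mathbb R$-Cartier divisor $D$ with $\ord_xD=c$ but $x\notin\Nqklt(X,\omega+G+D)$. Writing $c=\sum_j\lambda_j$ with $0<\lambda_j<1$, I take $D=\sum_j\lambda_jH_j$, where the $H_j$ are sufficiently general smooth divisors through $x$ with $\ord_xH_j=1$. Generality guarantees that the strict transforms of the $H_j$ avoid the centres of the blowings up over $x$ producing the $F_i$, so that $\mult_{F_i}(f^*D)=c\,e_i$ for every $F_i$ with $f(F_i)=x$; moreover each coefficient $\lambda_j<1$ keeps the strict transform of $H_j$ from being a non-klt place, and no positive-dimensional stratum through $x$ acquires coefficient $1$. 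Hence every coefficient over $x$ equals $a_i+b_i+c\,e_i<a_i+b_i+c_0e_i\le 1$, and since $x\notin\Nqklt(X,\omega+G)$ no qlc centre through $x$ survives, so $x\notin\Nqklt(X,\omega+G+D)$. Letting $c\uparrow c_0$ yields $c_1\ge c_0$, hence $c_1=c_0$. The remaining critical case, where $x\in\Nqklt(X,\omega+G)$ but $x\notin\Nqklt(X,\omega+(1-t)G)$ for all $0<t<1$, is then handled by applying the above to $(1-t)G$ and passing to the limit $t\to 0^+$, which is precisely how $d_x[X,\omega+G]$ is defined in that case.

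The main obstacle is the general-position argument underlying $c_1\ge c_0$: one must realise a prescribed sub-threshold order $c$ by an effective divisor that simultaneously has $\ord_xD=c$, contributes \emph{exactly} $c\,e_i$ (and not more) to each exceptional $F_i$ over $x$, and introduces no qlc centre through $x$, neither from its own components nor from the exceptional locus. This is the quasi-log analogue of the classical fact that the log canonical threshold read off from discrepancies agrees with the threshold seen by a general member; it is here that the attainment of the infimum on a single high resolution and a Bertini-type choice of the $H_j$ are essential, and where one must be careful to keep all coefficients of $D$ itself strictly below $1$ so that $D$ does not create spurious strata through $x$.
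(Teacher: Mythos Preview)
Your approach is essentially the same as the paper's: both directions rest on comparing $\mult_{F_i}(f^*D)$ with $(\ord_xD)\,e_i$, and both need a sufficiently general $D$ so that this comparison is an equality over $x$. The one technical point worth flagging is your assumption that the infimum defining $d_x[X,\omega+G]$ is actually attained on a single fixed resolution; the paper does not assume this and instead carries an auxiliary $\varepsilon>0$ throughout (choosing $F$ with $\frac{1-a-b}{e}<d+\varepsilon$ and only letting $\varepsilon\to 0$ at the end), which is the safer route since the infimum in Definition~\ref{defn-def-qlc} ranges over \emph{all} quasi-log resolutions and its attainment is not established in the paper. Conversely, your explicit construction $D=\sum_j\lambda_jH_j$ with general smooth $H_j$ through $x$ and $\lambda_j<1$ makes precise a step that the paper leaves implicit: the paper's reverse inequality tacitly relies on picking $D$ with $\ord_xD\in[c,c+\varepsilon)$ general enough that $g=(\ord_xD)\,e$, otherwise the displayed bound $a+b+(\varepsilon+c)e>a+b+g$ would fail.
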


\begin{proof}
Let $d=d_x[X,\omega+G]$. Let $0<\varepsilon\ll 1$ be a sufficiently small number.
Let $D$ be  any given effective $\mathbb R$-Cartier divisor with $\ord_x D \geq d$.
Then 
\begin{equation}\label{eq7.1}
g_i:=\mult_{F_i} f^*D =\ord_x D\cdot e_i \geq d\cdot e_i.
\end{equation}
By Definition \ref{defn-def-qlc},
there is a prime divisor $F$ on $Y$ such that $f(F)=x$ and 
\begin{equation}\label{eq7.2}
1-a-b<(d+\varepsilon)\cdot e
\end{equation}
where $a=\mult_F B_Y$,  $b=\mult_F f^*G$ and $e=\mult_F g^*E$.
 Let 
$$
f: (Y, B_Y+f^*G+f^*D) \to [X, \omega+G+D]
$$
be the induced quasi-log structure. Then \eqref{eq7.1} and \eqref{eq7.2} imply that
$$
\mult_F(B_Y+f^*G+f^*D)=a+b+g \geq a+b+d\cdot e>1-\varepsilon\cdot e.
$$
where $g=\mult_F f^*D$.
Note that the numbers $a$, $b$, $g$ and $e$ will not change anymore if we further blow up $(Y, B_Y)$
and consider the strict transform of $F$. Therefore, let $\varepsilon \to 0$, we have that
(after replacing $F$ with its strict tranform):
$$
\mult_F(B_Y+f^*G+f^*D) \geq 1.
$$
That is, $x\in\Nqklt (X,\omega+G+D)$ which implies that $c\leq d$ by assumption.

\medskip

Conversely, let $D$ be an effective $\mathbb R$-Cartier divisor with 
$\varepsilon +c > \ord_x D \geq c$.
Then by assumption, there is a prime divisor $F$ on $Y$ such that $f(F)=x$ and 
$$
a+b +(\varepsilon +c)\cdot e > \mult_F(B_Y+f^*G+f^*D)=a+b+g \geq 1.
$$
Therefore, $\varepsilon +c > d$ by definition of $d$. Let  $\varepsilon \to 0$, we have
that $c\geq d$. We get what we want.
\end{proof}

Let $N$ be an ample $\mathbb R$-Cartier divisor.
By using Theorem \ref{u-thm} (see the proof of \cite[Theorem 3.2]{fujino-haidong-freeness}), 
we can take a 
boundary $\mathbb R$-divisor $\Delta_\varepsilon$ on 
$X$ such that $K_X+\Delta_\varepsilon \sim _{\mathbb R} \omega+\varepsilon N$ for 
$0<\varepsilon \ll 1$ and $\mathcal J(X, \Delta_\varepsilon)=\mathcal I_{\Nqklt(X, \omega)}$
where $\mathcal J(X, \Delta_\varepsilon)$ is the multiplier ideal sheaf of $(X, \Delta_\varepsilon)$. 
Since $\mathcal J(X, \Delta_\varepsilon)=\mathcal I_{\Nqklt(X, \omega)}$, $(X, \Delta_\varepsilon)$ is 
klt in a neighborhood of $x$. 
Let $D_\varepsilon$ be an effective $\mathbb R$-Cartier divisor.
Note that we get a natural quasi-log structure on $[X, \omega_{\varepsilon}+D_\varepsilon]$ with 
$\omega_\varepsilon:=K_X+\Delta_{\varepsilon}$. 
$W_\varepsilon$ is the minimal qlc center of $[X, \omega_{\varepsilon}+D_\varepsilon]$
passing through $x$,
is equivalent to say that, $W_\varepsilon$ is the minimal log canonical center 
of $(X, \Delta_{\varepsilon}+D_\varepsilon)$ passing through $x$. 
Let $d_\varepsilon$ be the deficit of $0$ with respect to $(X, \Delta_\varepsilon)$ defined in
Definition \ref{defn-def}. Then:

\begin{lem}\label{lem-def-qlc-2}
$d_x[X,\omega]=\lim_{\varepsilon \to 0^{+}} d_\varepsilon$.
\end{lem}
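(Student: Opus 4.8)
The plan is to collapse the two notions of deficit into one and then to establish a continuity statement for the quasi-log canonical deficit under the perturbation $\omega\rightsquigarrow\omega_\varepsilon:=K_X+\Delta_\varepsilon\sim_{\mathbb R}\omega+\varepsilon N$. First I would show that $d_\varepsilon$ is nothing but the qlc deficit $d_x[X,\omega_\varepsilon]$ of the natural quasi-log structure carried by $(X,\Delta_\varepsilon)$. By Remark \ref{rem-def-2}, $d_\varepsilon$ is the smallest $c\in\mathbb R_{\geq 0}$ such that $x\in\Nklt(X,\Delta_\varepsilon+D)$ for every effective $\mathbb R$-Cartier divisor $D$ with $\ord_x D\geq c$; by Lemma \ref{lem-def-qlc-1}, $d_x[X,\omega_\varepsilon]$ is the smallest such $c$ for the requirement $x\in\Nqklt(X,\omega_\varepsilon+D)$. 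Since $(X,\Delta_\varepsilon)$ is klt near $x$ and $\omega_\varepsilon=K_X+\Delta_\varepsilon$, the induced quasi-log structure coincides with this klt pair in a neighbourhood of $x$, so that $\Nqklt(X,\omega_\varepsilon+D)=\Nklt(X,\Delta_\varepsilon+D)$ there for every effective $D$; this is exactly the equivalence recorded just above the statement. Hence $d_\varepsilon=d_x[X,\omega_\varepsilon]$, and the lemma becomes the continuity assertion $d_x[X,\omega]=\lim_{\varepsilon\to 0^+}d_x[X,\omega_\varepsilon]$.

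Next I would fix a single log resolution $f\colon Y\to X$ factoring through the blowing up $\varphi$ of $x$ from Definition \ref{defn-ord}, with $g^*E=\sum_i e_iF_i$, on which the infimum of Definition \ref{defn-def-qlc} is realized. Applying Theorem \ref{u-thm} to $[X,\omega]$ gives $K_{X'}+B_{X'}+M_{X'}=p^*\omega$ with $B_{X'}$ a subboundary and $M_{X'}$ nef; and, as in the construction of $\Delta_\varepsilon$ (cf. the proof of \cite[Theorem 3.2]{fujino-haidong-freeness}), $\Delta_\varepsilon$ is produced from $B_{X'}$ together with an effective representative $D_\varepsilon\sim_{\mathbb R}M_{X'}+\varepsilon p^*N$. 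Pulling back to $Y$, the boundary coefficient of $F_i$ for $\omega_\varepsilon$ is $a_i+t_i(\varepsilon)$, where $a_i$ records the part coming from $K_{X'}+B_{X'}$ and $t_i(\varepsilon)=\mult_{F_i}D_\varepsilon\geq 0$ records the contribution over $x$ of the chosen representative of the nef class. Thus
\[ d_x[X,\omega_\varepsilon]=\inf_{f(F_i)=x}\frac{1-a_i-t_i(\varepsilon)}{e_i}, \]
so the task reduces to understanding $\lim_{\varepsilon\to0^+}t_i(\varepsilon)$ and comparing with the intrinsic qlc boundary $B_Y$ of $[X,\omega]$.

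Since $N$ is ample, $p^*N$ is nef and big, so $M_{X'}+\varepsilon p^*N$ is big for every $\varepsilon>0$ and does admit effective representatives; moreover the $\varepsilon p^*N$-summand may be realized by the pullback of a general member of a large multiple of $N$ avoiding $x$, which contributes nothing along the $F_i$ over $x$. Choosing $D_\varepsilon$ with least possible multiplicities over $x$, the functions $t_i(\varepsilon)$ become non-increasing and bounded below, hence converge as $\varepsilon\to0^+$. The content to be verified is that their limits are exactly the multiplicities that the nef part $M_{X'}$ contributes to the qlc boundary $B_Y$, so that the infimum over the finitely many valuations $F_i$ attaining the threshold passes to the limit and returns $d_x[X,\omega]$.

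The analysis of these nef-part contributions is the step I expect to be the main obstacle. Here both the nefness of $M_{X'}$ furnished by Theorem \ref{u-thm} and the ampleness of $N$ are indispensable: ampleness manufactures the effective representatives and disposes of the $\varepsilon N$-part, while nefness is what prevents the limiting multiplicities from overshooting the datum already encoded in $B_Y$. Once the convergence is made uniform -- which I would reduce to the fact that only finitely many $F_i$ can compute the infimum near the threshold, so that continuity of each $t_i(\varepsilon)$ upgrades to continuity of the infimum -- the two one-sided inequalities follow and the claimed equality $d_x[X,\omega]=\lim_{\varepsilon\to 0^+}d_\varepsilon$ is established.
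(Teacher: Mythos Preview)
Your first reduction, that $d_\varepsilon=d_x[X,\omega_\varepsilon]$ via the threshold characterizations in Remark~\ref{rem-def-2} and Lemma~\ref{lem-def-qlc-1}, is correct and useful. The trouble begins when you try to compute both deficits by the infimum formula on a common model and compare coefficients.

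There is a genuine gap at the step you yourself flag as ``the main obstacle''. You write the boundary coefficient of $F_i$ for $\omega_\varepsilon$ as $a_i+t_i(\varepsilon)$, with $a_i$ coming from $K_{X'}+B_{X'}$ and $t_i(\varepsilon)$ from a chosen effective $D_\varepsilon\sim_{\mathbb R}M_{X'}+\varepsilon p^*N$. But the $a_i$ appearing in Definition~\ref{defn-def-qlc} for $d_x[X,\omega]$ are the coefficients of the \emph{qlc boundary $B_Y$}, not those of (the pullback of) $B_{X'}$. The relation $K_Y+B_Y\sim_{\mathbb R} f^*\omega$ only determines $B_Y$ up to $\mathbb R$-linear equivalence, not as a divisor; the specific $B_Y$ is part of the datum of the quasi-log structure, and there is no reason its coefficients should agree with those produced by the generalized-pair decomposition of Theorem~\ref{u-thm}. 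So even if you could show that $t_i(\varepsilon)$ converges to some limit, you have not explained why that limit plus the $B_{X'}$-contribution recovers the $B_Y$-coefficient. Moreover, since $M_{X'}$ is only nef, as $\varepsilon\to 0$ the class $M_{X'}+\varepsilon p^*N$ may cease to have effective representatives with controlled multiplicities over $x$; your monotonicity argument for $t_i(\varepsilon)$ does not by itself identify the limit. Finally, fixing a single resolution ``on which the infimum is realized'' for both $d_x[X,\omega]$ and every $d_\varepsilon$ simultaneously needs justification, since the computing divisor may a priori depend on $\varepsilon$.

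The paper sidesteps all of this by working entirely with the threshold characterization of Lemma~\ref{lem-def-qlc-1} and never unpacking the infimum. For $d_x[X,\omega]\geq d_\varepsilon$: if $\ord_x D\geq d_\varepsilon$ then $x\in\Nqklt(X,\omega_\varepsilon+D)$, and since adding the effective perturbation $\varepsilon N$ can only enlarge the non-klt locus, this does not force $x\in\Nqklt(X,\omega+D)$; hence $d_x[X,\omega]\geq d_\varepsilon$. For the reverse, pick $D_t$ with $\ord_x D_t=d_x[X,\omega]-t$ and $x\notin\Nqklt(X,\omega+D_t)$, then choose $\varepsilon$ so small that $\varepsilon\cdot\ord_x N<t$ and the $\varepsilon N$-perturbation still leaves $x\notin\Nqklt(X,\omega_\varepsilon+D_t)$; this gives $d_\varepsilon\geq d_x[X,\omega]-t$. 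Two limits finish the proof. This avoids any comparison of boundary coefficients on a model and any analysis of effective representatives of $M_{X'}$.
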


\begin{proof}
By blowing up $(Y, B_Y)$ further and the universal property of blowing up,  
there is a commutative diagram as follows:
$$
\xymatrix{
Z\ar[d]_-h\ar[dr]_-{p} & Y\ar[d]^-f\ar[l]_-g\\ 
X' \ar[r]_-{\varphi}& X.
}
$$
where $\varphi$ is the blowing up of point $x$ 
as in Definition \ref{defn-def} and Definition \ref{defn-def-qlc},
and $p$ is a sufficiently high log resolution as in Theorem \ref{u-thm}.
Let $D_\varepsilon$ be an effective $\mathbb R$-Cartier divisor such that 
$\ord_x D_\varepsilon\geq d_\varepsilon$.
By Lemma \ref{lem-def-qlc-1}, $x \in \Nqklt (X, \omega_{\varepsilon}+D_\varepsilon)$.
By our construction, $\Nqklt (X, \omega_{\varepsilon}+D_\varepsilon)$ is dominated by 
$(B_Y+\varepsilon f^*N+f^*D_\varepsilon)^{\geq 1}$. 
Since $$
(B_Y+f^*D_\varepsilon)^{\geq 1} \preceq (B_Y+\varepsilon f^*N+f^*D_\varepsilon)^{\geq 1},
$$
$x$ is not necessarily contained in $\Nqklt (X, \omega+ D_\varepsilon)$.
This means that
$d_x[X,\omega] \geq d_\varepsilon$. 
By limiting, $d_x[X,\omega]\geq \lim_{\varepsilon \to 0^{+}} d_\varepsilon$.

\medskip

Conversely, let $t$ be a sufficiently small number.
Let $D_t$ be an effective $\mathbb R$-Cartier divisor such that 
$\ord_x D_t= d_x[X,\omega]-t$ and $x\notin \Nqklt (\omega+D_t)$. 
We can choose a small number $0<\varepsilon \ll 1$ such that
$\varepsilon \cdot \ord_x N<t$ and $x\notin \Nqklt (\omega+\varepsilon N+D_t)$. 
That is, $d_\varepsilon \geq d_x[X,\omega]-t$.
Let $\varepsilon \to 0$, we have that  $\lim_{\varepsilon \to 0^{+}} d_\varepsilon \geq d_x[X,\omega]-t$.
Let $t\to 0$, we have that $\lim_{\varepsilon \to 0^{+}} d_\varepsilon \geq d_x[X,\omega]$.
We get what we want.
\end{proof}

By Lemma \ref{lem-def-qlc-2}, any property belonging to deficit of klt pair is also belonging to 
deficit of qlc pair. In particular, since a klt pair $(X,\Delta)$ has a natural qlc structure,
the deficits of both cases are the same.
Note that if \cite[Conjecture 1.5]{fujino-haidong} is true, then 
Lemma \ref{lem-def-qlc-2} is tedious.

%%%%%%%%%%%%%%%

\end{document}